\newtheoremstyle{standard}
{16pt} 
{16pt} 
{} 
{} 
{\bfseries}
{} 
{ } 
{{\thmname{#1~}}{\thmnumber{#2.}}\thmnote{~(#3)}} 
\newtheoremstyle{kursiv}
{16pt} 
{16pt} 
{\itshape} 
{} 
{\bfseries}
{} 
{ } 
{{\thmname{#1~}}{\thmnumber{#2.}}\thmnote{~(#3)}} 
\theoremstyle{standard}
\newtheorem{defn} [subsection]{Definition}
\newtheorem{ex} [subsection]{Example}
\newtheorem{rem} [subsection]{Remark}
\newtheorem{setup} [subsection]{}
\theoremstyle{kursiv}
\newtheorem{prop} [subsection]{Proposition}
\newtheorem{cor} [subsection]{Corollary}
\newtheorem{lem} [subsection]{Lemma}
\newcommand{\cG}{\ensuremath{\mathcal{G}}}
\newcommand{\dd}{\mathop{}\!\mathrm{d}}
\newcommand{\R}{\ensuremath{\mathbb{R}}}
\newcommand{\N}{\ensuremath{\mathbb{N}}}
\newcommand{\M}{\ensuremath{\mathbb{M}}}
\newcommand{\toto}{\ensuremath{\nobreak\rightrightarrows\nobreak}}
\newcommand{\coloneq}{\colonequals}
\newcommand{\SG}{S_{\cG}}
\newcommand{\RepG}{\phi} 
\newcommand{\ActG}{\Phi} 
\newcommand{\rhoG}{\rho_\RepG} 
\newcommand{\Prop}{\ensuremath{\operatorname{Prop}}}
\newcommand{\Bis}{\ensuremath{\operatorname{Bis}}}
\newcommand{\ev}{\ensuremath{\operatorname{ev}}}
\DeclareMathOperator{\Diff}{Diff}
\DeclareMathOperator{\id}{id}
\DeclareMathOperator{\GL}{\mathrm{GL}}
\DeclareMathOperator{\SL}{{\mathcal{S}l}}
\DeclareMathOperator{\im}{im}
\DeclareMathOperator{\Aut}{Aut}
\newcommand{\CfS}{\ensuremath{C^\infty_{\mathrm{fS}}}}
\newcommand{\Frechet}{Fr\'{e}chet}
\newcommand{\cat}[1]{\ensuremath{\mathsf{\mathop{#1}}}}
\newcommand{\Rep}{\cat{Rep}}
\begin{document}
\title{Linking Lie groupoid representations and representations of infinite-dimensional Lie groups}
 \author{Habib Amiri\footnote{University of Zanjan, Iran
\href{mailto:h.amiri@znu.ac.ir}{h.amiri@znu.ac.ir}}\hspace{2mm} and Alexander
Schmeding\footnote{TU Berlin, Germany
\href{mailto:schmeding@tu-berlin.de}{schmeding@tu-berlin.de}
}%
}
{\let\newpage\relax\maketitle}

\begin{abstract}
The present paper links the representation theory of Lie groupoids and infinite-dimensional Lie groups.
We show that smooth representations of Lie groupoids give rise to smooth representations of associated Lie groups. The groups envisaged here are the bisection group and a group of groupoid self maps. Then representations of the Lie groupoids give rise to representations of the infinite-dimensional Lie groups on spaces of (compactly supported) bundle sections. Endowing the spaces of bundle sections with a fine Whitney type topology, the fine very strong topology, we even obtain continuous and smooth representations. It is known that in the topological category, this correspondence can be reversed for certain topological groupoids. We extend this result to the smooth category under weaker assumptions on the groupoids.
\end{abstract}


\textbf{Keywords:} Lie groupoid, representation of groupoids, group of bisections, infinite-dimensional Lie group, smooth representation, semi-linear map, jet groupoid

\medskip

\textbf{MSC2010:}
22E66  (primary);
 22E65, 
  22A22, 
 58D15 
 (secondary)

\tableofcontents

\section*{Introduction and statement of results} \addcontentsline{toc}{section}{Introduction and statement of results}

Groupoids and their representations appear in a variety of mathematical areas. For example, they have been studied in connection with geometric quantisation \cite{MR2343354}, representations of $C^*$-algebras \cite{MR2844451,MR584266,MR0211351} and the study of covariance of differential operators \cite{MR0438405,MR1958838} to name just a few.
In the present paper we are interested in smooth representations of Lie groupoids. Our aim is to relate the representations of a Lie groupoid to the  representation of certain infinite-dimensional Lie groups associated to the Lie groupoid.
Recall that to every Lie groupoid $\cG$ one can associate its group of bisections $\Bis (\cG)$, which is an infinite-dimensional Lie group (cf.\ \cite{MR1943713,MR3351079}). In \cite{MR1958838} it was shown that every representation of a Lie groupoid gives rise to a representation of $\Bis (\cG)$. The present article establishes smoothness of these representations with respect to the natural smooth structure of the bisections group. Then we prove that under mild assumptions on $\cG$ a certain class of smooth representations of the bisection group corresponds to representations of the underlying groupoid. Furthermore, we extend the link between groupoids and bisection groups to representations of another infinite-dimensional Lie group of self-maps of the Lie groupoid $\SG(\alpha)$ which we introduced in \cite{AS17}.
This yields (faithful) functors from the representation category $\Rep(\cG)$ of a Lie groupoid to the representation categories of the infinite-dimensional Lie groups. For $\alpha$-proper groupoids (defined below)  the construction commutes with restriction to the object space, whence we obtain a commuting diagram:
\begin{displaymath}
  \xymatrix{
      &  \Rep (\cG)  \ar[ld]_{\rho} \ar[rd]^{\rho_S}    &      \\
       \Rep (\Bis (\cG))         & & \Rep (\SG(\alpha)) \ar@{.>}[ll]^{\stackrel{\text{restriction}}{\text{if $\cG$ is $\alpha$-proper}}}
  }
\end{displaymath}
Our results thus link the representation theory of (finite dimensional) Lie groupoids and infinite-dimensional Lie groups and provide a foundation to study certain representations of a class of infinite-dimensional Lie groups via finite-dimensional Lie groupoids or vice versa. \medskip

We will now explain our results in greater detail. For a Lie groupoid $\cG = (G \toto M)$ we denote by $\Bis (\cG)$ its group of bisections, i.e.\ the group of smooth sections $\sigma \colon M \rightarrow G$ of the target map $\beta$ such that the composition with the source map yields a diffeomorphism $\alpha \circ \sigma$ on $M$.
It has been shown in \cite{MR3351079} that for compact source $M$, the inclusion $\Bis (\cG) \subseteq C^\infty (M,G)$ endows the bisection group with a natural manifold structure turning it into a Lie group modelled on a \Frechet\, space. To make sense of this statement (which requires calculus beyond Banach spaces), we base our investigation on smoothness in the sense of Bastiani calculus \cite{bastiani}.\footnote{A map $f \colon E\supseteq X \rightarrow F$ between (open) subsets of locally convex spaces is smooth in this sense if all iterated partial derivatives exist and yield continuous mapping, cf.\ Appendix \ref{App:Mfd}. We remark that this is a stronger notion of smoothness (for general locally convex spaces) then the so called "convenient calculus", cf.\ \cite{conv1997}.} In the present paper we do not wish to restrict ourselves to compact $M$, whence our first result of independent interest is the following.\medskip

\textbf{Proposition A} \emph{Let $\cG = (G \toto M)$ be a finite-dimensional Lie groupoid. Then $\Bis (\cG)$ is a submanifold of the manifold of mappings $C^\infty (M,G)$ (cf.\ \cite{michor1980}) and this structure turns $\Bis (\cG)$ into an infinite-dimensional Lie group.}\medskip

To our knowledge a full proof of this fact appears in full detail for the first time in the present paper (cf.\ the sketch of a generalisation in \cite{HS2016}). In view of Proposition A, our aim is to prove that a smooth representation of a Lie groupoid $\cG$ induces a smooth representation of the infinite-dimensional Lie group $\Bis (\cG)$.
To this end recall that a representation of a Lie groupoid $\cG = (G \toto M)$ on a vector bundle $E \rightarrow M$ is a morphism of Lie groupoids $\phi \colon \cG \rightarrow \Phi (E)$, where $\Phi (E)$ is the frame groupoid associated to the bundle $E$ \cite{Mackenzie05}. It has been shown in \cite{MR1958838} that every representation of $\cG$ gives rise to an associated representation $\rho_\phi \colon \Bis (\cG)\rightarrow \GL (\Gamma (E))$, where $\Gamma(E)$ is the space of smooth sections of the bundle $E\rightarrow M$. Since the natural compact open $C^\infty$-topology turns $\Gamma (E)$ into a locally convex space, it makes sense to study continuity and smoothness of $\rho_\phi$ via the associated map\footnote{Recall that the definition of smoothness of a representation $\rho \colon H \rightarrow \GL(V)$ of an infinite-dimensional Lie group (on an infinite-dimensional vector space) avoids topological data on $\GL(V)$. Instead, one requires smoothness of the orbit maps $\rho_v \colon H \rightarrow V, h \mapsto \rho(h).v$.} 
$$\rho_{\phi}^\wedge \colon \Bis (\cG) \times \Gamma (E) \rightarrow \Gamma (E),\quad (\sigma, X) \mapsto \rho_\phi (\sigma).X.$$
If the base manifold $M$ is non-compact, continuity and smoothness of $\rho_\phi^\wedge$ are difficult to acquire on $\Gamma(E)$ due to some technical obstructions enforced by the function space topologies involved. We discuss this in Remark \ref{rem:whynot?} and propose a different strategy in the present paper. The representation $\rho_\phi^\wedge$ factors through a representation on the space of compactly supported sections $\Gamma_c (E)$ of the bundle $E$. This space is again a locally convex space, though with respect to the much finer fine very strong topology (a Whitney type topology which coincides with the compact open $C^\infty$-topology if $M$ is compact, see \ref{fS:top}). By abuse of notation we also denote this representation by $\rho_\phi^\wedge$ and show that $\rho_\phi^\wedge$ is smooth in a very strong sense.
Namely, $\rho_\phi^\wedge \colon \Bis (\cG) \times \Gamma_c (E) \rightarrow \Gamma_c (E)$ turns out to be smooth and we call this property \emph{joint smoothness} of the representation. Thus every smooth representation of a Lie groupoid gives rise to a (joint) smooth representation of infinite-dimensional Lie groups.

In the representation theory of $C^*$-algebras, a partial converse for the above construction has been established in the topological category: Consider a topological groupoid which satisfies certain assumptions (such as having enough bisections, to be defined below), then a certain class of continuous  representation of the group of continuous bisections (viewed as a topological group) gives rise to a continuous representation of the topological groupoid (see \cite{MR2844451}).
 To make this statement precise, let us review the properties inherited by the representation. It turns out that $\rho_\phi$:
 \begin{itemize}
 \item is semi-linear, i.e.\ there are smooth diffeomorphisms $\mu_\sigma$ of $M$ such that
$$\rho_\phi (\sigma).(fX) = (f\circ \mu_\sigma) \cdot\big( \rho_\phi (\sigma).X \big)\quad X\in \Gamma_c(E), f \in C^\infty (M), \sigma \in \Bis (\cG).$$
\item is local, i.e.\ if a bisection maps $m$ to the unit arrow $1_m$, then $\rho_\phi (\sigma).X(m)=X(m)$ for all $X\in\Gamma_c(E)$.
\end{itemize}
 These properties already characterise the class of representations of $\Bis (\cG)$ which gives rise to representations of the underlying groupoid. Namely, we obtain our second main result.\medskip

 \textbf{Theorem B} \emph{Let $\rho \colon \cG \rightarrow \Phi(E)$ be a representation of a Lie groupoid on a vector bundle. Then $\phi$ induces a joint smooth, semi-linear and local representation $\rho_\phi$ of the bisection group $\Bis (\cG)$ on the compactly supported sections of $E$.
 Furthermore, this construction yields a faithful functor $\Rep (\cG) \rightarrow \Rep (\Bis(\cG))$ of the representation categories.}

 \emph{If in addition, $\cG$ has enough bisections\footnote{A Lie groupoid $\cG = (G\toto M)$ has "enough bisections" if for every $g \in G$ there is $\sigma_g \in \Bis (\cG)$ such that $\sigma_g (\beta(g))=g$.}, then every joint smooth, semi-linear and local representation of $\Bis (\cG)$ on a space of compactly supported sections arises in this way.}\medskip

 Theorem B allows one to characterise (under suitable assumptions) the smoothness of semi-linear representations in terms of the smoothness of $\Bis (\cG) \times \Gamma_c (E) \rightarrow \Gamma_c(E)$. In this respect, it provides an answer to the question raised in \cite[p.\ 230]{MR1958838}, whether smoothness of these representations can be characterised in terms of the action of the bisections alone.

 Furthermore, it is worth noting that we just needed to assume that the Lie groupoid has enough bisections to invert the correspondence between smooth representation. In particular, the technical assumptions needed in the topological category in \cite{MR2844451} are not needed for Lie groupoids. In ibid.\ a technical condition on the compact open topology on $C(M,G)$ had to be assumed and it was conjectured that this condition is not needed for Lie groupoids.
 Finally, we remark that the condition of having enough bisections is very natural if one wants to link Lie groupoids and their bisections. Indeed in \cite{MR3569066,MR3573833} the authors has shown that this condition is necessary to reconstruct the Lie groupoid from its group of bisections. Thus it comes as no surprise that this condition is crucial for the recovery of groupoid representations from representations of the bisection group.

In the last part of the article we consider induced representations on a Lie group of self-mappings of a Lie groupoid $\cG = (G\toto M)$. The Lie group we envisage here is the group of units $\SG(\alpha)$ of the monoid
$$(\SG \coloneq \big\{f \in C^\infty (G,G) \mid \beta \circ f = \alpha \big\}, \star),$$
where the multiplication is defined via $(f\star g) (x)= f(x)g(xf(x))$ (cf.\ \cite{AS17}). This group is closely associated to the bisection group $\Bis (\cG)$, since the latter can be realised as a subgroup of $\SG(\alpha)$.
Similar to the bisection case, representations of the Lie groupoid induce semi-linear representations of $\SG(\alpha)$.
\medskip

\textbf{Theorem C} \emph{Let $\phi \colon \cG \rightarrow \Phi (E)$ be a representation of the Lie groupoid $\cG$ on a vector bundle $(E,\pi,M)$. Then $\phi$ induces a joint smooth semi-linear and local representation $\rho_{\phi,S}$ of $\SG(\alpha)$ on $\Gamma_c (\alpha^*E)$ (compactly supported sections of the pullback bundle $\alpha^*E$). Further, the construction induces a faithful functor $\Rep (\cG) \rightarrow \Rep (\SG(\alpha))$.}\medskip

Unfortunately, there seems to be no natural condition (akin to the bisection case) which guarantees that every semi-linear, local and joint smooth representation of $\SG(\alpha)$ arises via the construction outlined in Theorem C. Nevertheless, it is interesting to study the relation between the induced representations of $\SG(\alpha)$ and $\Bis (\cG)$. To this end, recall from \cite{AS17,Amiri2017} that precomposition with the source map of $\cG$ induces a group monomorphism $\alpha_* \colon \Bis (\cG) \rightarrow \SG(\alpha)$. In general, $\alpha_*$ will not be smooth. However, if one requires that $\cG$ is $\alpha$-proper (i.e.\ $\alpha$ is a proper map), then $\alpha_*$ becomes a morphism of Lie groups. Using this morphism, we can restrict representations of $\SG(\alpha)$ to representations of $\Bis(\cG)$.
Our results subsume the following Theorem.\medskip

\textbf{Theorem D} \emph{Let $\cG$ be an $\alpha$-proper Lie groupoid, $(E,\pi,M)$ a vector bundle and $\rho_S \colon \SG(\alpha) \times \Gamma_c (\alpha^*E) \rightarrow \Gamma_c(\alpha^*E)$ a joint smooth and semi-linear representation. Then $\rho_S$ induces a joint smooth and semi-linear representation of $\Bis (\cG)$ on $\Gamma_c(E)$.}\medskip

Thus for $\alpha$-proper Lie groupoids, the constructions are compatible in the sense that they coincide for the bisection group. Since we are working with compactly supported sections, note that the restriction makes sense only for $\alpha$-proper groupoids.

The present article is structured as follows: In Section \ref{sect: prelim} we recall basic concepts and prepare the approach by establishing the Lie group structure of $\Bis (\cG)$ for non-compact source manifolds (Proposition A). Then Section \ref{sect: semi} recalls some essential results on the group of semi-linear morphisms of a $C^\infty_c$-module. Finally, Section \ref{sect:reps} deals first with representations on the group of bisections, then with the representations of $\SG(\alpha)$ and finally with functorial aspects of these constructions.

\section{Infinite-dimensional Lie groups from Lie groupoids}\label{sect: prelim}

In this section we recall first some well-known concepts and fix the notation used throughout the paper. Then we establish the Lie group structure of the bisection group for non-compact source manifolds (Proposition A from the introduction).
We assume that the reader is familiar with finite-dimensional Lie groupoids as presented e.g.\ in the book \cite{Mackenzie05}.

\begin{setup}[Conventions] We write $\N \coloneq \{1,2,3,\ldots\}$ and $\N_0 \coloneq \N \cup \{0\}$.
By $\cG = (G \toto M)$ we denote a finite-dimensional Lie groupoid with space of arrows $G$ and base $M$.\footnote{One could allow $G$ to be infinite-dimensional manifold modelled on a locally convex space. Note that the notion of an infinite-dimensional Lie groupoid makes sense once a suitable notion of submersion is chosen, see \cite{MR3351079} and \cite{Glofun}.}
In general $\alpha$ will be the source and $\beta$ the target map of $\cG$. We suppress the unit map $1\colon M\rightarrow G$ and identify $M \cong 1(M) \subseteq G$.

 Throughout the paper, all manifolds are assumed to be Hausdorff, and if a manifold is finite-dimensional we assume in addition that it is paracompact. Appendix \ref{App:Mfd} contains an outline on the infinite-dimensional calculus and facts on manifolds of mappings (such as the group of diffeomorphisms $\Diff (M)$) needed in the article.
 For $M,N$ finite-dimensional manifolds, we denote by $C^\infty_{\mathrm{fS}} (M,N)$ the space of smooth mappings $M\rightarrow N$ endowed with the fine very strong topology (a Whitney type topology turning $C^\infty (M,N)$ into a manifold which coincides with the compact open $C^\infty$-topology if $M$ is compact). See \ref{fS:top} for details.
\end{setup}


\begin{defn}[{Bisections of a Lie groupoid}]\label{defn: bis}
A \emph{bisection} of a Lie groupoid $\cG$ is a smooth map $\sigma\colon  M\rightarrow G$ which satisfies
$$ \beta \circ \sigma = \id_M \text{ and } \alpha \circ \sigma \in \Diff (M).$$
The set of all bisections $\Bis(\cG)$ is a group with respect to the group operations
\begin{equation}\label{eq: bis mult}
 (\sigma_1\star\sigma_2)(x)=\sigma_1(x)\sigma_2\big((\alpha\circ\sigma_1)(x)\big),\quad \sigma^{-1}=\Big(\sigma\circ(\alpha\circ\sigma)^{-1}(x)\Big)^{-1} \quad\text{for all}\ x \in M.
\end{equation}
Finally, we define the subgroup 
$$\Bis_c (\cG):=\{\sigma \in \Bis (\cG)\mid  \text{there is } K\subseteq M \text{ compact, with } \sigma(x)=1_x \text{ for all } x\in M\setminus K\}.$$
\end{defn}

Following \cite[Theorem A]{MR3351079} we recall that $\Bis (\cG)$ is an infinite-dimensional Lie group if $M$ is compact. For the non-compact case, we refer to \cite{HS2016} for a sketch of the changes.
However, since no complete proof exists in the literature, we restrict to the case of a finite-dimensional groupoid $\cG$ and establish the necessary details in the following Proposition.

\begin{prop}\label{prop: BIS:LGP}
Let $\cG = (G\toto M)$ be a finite-dimensional Lie groupoid. Then $\Bis_c (\cG)$ and $\Bis(\cG)$ are submanifolds of $C^\infty_{\mathrm{fS}} (M,G)$ and this manifold structure turns them into infinite-dimensional Lie groups.
\end{prop}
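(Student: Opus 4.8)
The plan is to construct the manifold structure on $\Bis(\cG)$ as a submanifold of $\CfS(M,G)$ by producing an explicit submanifold chart around an arbitrary bisection, and then to check that the group operations \eqref{eq: bis mult} are smooth in these charts. The key observation is that the two defining conditions $\beta\circ\sigma=\id_M$ and $\alpha\circ\sigma\in\Diff(M)$ behave very differently: the first is a \emph{closed} condition cutting out a splitting submanifold, while the second is an \emph{open} condition once the first is imposed. So the first step is to recall the manifold structure on $\CfS(M,G)$ from \cite{michor1980} (and Appendix \ref{App:Mfd}), whose charts are built from local additions/tubular neighbourhoods on $G$ and whose model spaces are spaces of compactly supported sections of pullback bundles, equipped with the fine very strong topology.

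Next I would treat the condition $\beta\circ\sigma=\id_M$. The idea is to choose, along a fixed bisection $\sigma_0$, a tubular neighbourhood of $G$ adapted to the submersion $\beta$, i.e.\ a local addition on $G$ whose fibres over the zero section are contained in the $\beta$-fibres (using that $\beta$ is a surjective submersion, so $\ker T\beta$ is a subbundle and one can pick a complement and an exponential-type map respecting it). In the resulting chart $\CfS(M,G)\supseteq \cO_{\sigma_0}\cong \Gamma_c(\sigma_0^*\ker T\beta)$ (intersected appropriately), the subset of maps satisfying $\beta\circ\sigma=\id_M$ corresponds exactly to the full model space of sections of $\sigma_0^*\ker T\beta$; hence $\{\sigma\in\CfS(M,G)\mid \beta\circ\sigma=\id_M\}$ is a closed split submanifold, and it is precisely the space of smooth sections of $\beta$ with the fine very strong topology. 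Then I would show that within this submanifold the further condition $\alpha\circ\sigma\in\Diff(M)$ defines an \emph{open} subset: the map $\sigma\mapsto\alpha\circ\sigma$ is smooth from sections of $\beta$ into $\CfS(M,M)$, $\Diff(M)$ is open in $\CfS(M,M)$ (this is part of the manifolds-of-mappings package, cf.\ Appendix \ref{App:Mfd}), so $\Bis(\cG)$ is open in the section space and thus a submanifold of $\CfS(M,G)$; the same argument restricted to compactly supported sections gives $\Bis_c(\cG)$, which is also open in $\Bis(\cG)$ when $M$ is non-compact (and equals it when $M$ is compact).

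Finally I would verify that the group operations are smooth. For multiplication $(\sigma_1,\sigma_2)\mapsto \sigma_1\star\sigma_2$ with $(\sigma_1\star\sigma_2)(x)=\sigma_1(x)\,\sigma_2\big((\alpha\circ\sigma_1)(x)\big)$, I would write it as a composition of: the smooth map $\sigma_1\mapsto \alpha\circ\sigma_1\in\Diff(M)$; the composition map $\Diff(M)\times\CfS(M,G)\to\CfS(M,G)$, $(\mu,\sigma_2)\mapsto \sigma_2\circ\mu$, which is smooth by the results on manifolds of mappings; the "diagonal" pairing $x\mapsto(\sigma_1(x),\sigma_2(\alpha\sigma_1(x)))$ landing in the pullback manifold $G\times_{\alpha,\beta}G=\Gtwo$ of composable arrows; and post-composition with the groupoid multiplication $m\colon \Gtwo\to G$, which is smooth, using the pushforward theorem that $C^\infty_{\mathrm{fS}}(M,\Gtwo)\to\CfS(M,G)$, $f\mapsto m\circ f$ is smooth. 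Inversion is handled similarly, writing $\sigma^{-1}=\iota\circ\sigma\circ(\alpha\circ\sigma)^{-1}$ with $\iota$ the groupoid inversion, using smoothness of inversion in $\Diff(M)$ and again the pushforward/pullback smoothness results. The main obstacle I anticipate is purely technical bookkeeping with the fine very strong topology: one must check that all these pushforward and pullback maps are continuous/smooth for the \emph{fine} very strong topology (not just the ordinary very strong or compact-open $C^\infty$ topology), keeping careful track of supports under composition with $\alpha\circ\sigma_1$ (which, being a diffeomorphism, preserves compactness but may move supports around), and that the local addition can be chosen globally adapted to $\beta$; I would isolate these as lemmas relying on Appendix \ref{App:Mfd} rather than reproving the manifold-of-mappings machinery.
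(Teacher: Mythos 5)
Your proposal is correct and follows essentially the same route as the paper: the openness of the condition $\alpha\circ\sigma\in\Diff(M)$, the realisation of $\Bis_c(\cG)$ as an open subset, and the decomposition of the group operations into pushforwards, the composition map $\mathrm{Comp}$ with proper (here: diffeomorphic) second argument, and the groupoid multiplication on $G^{(2)}$ all match the paper's argument. The only difference is that where you construct submanifold charts by hand from a local addition adapted to the submersion $\beta$, the paper simply invokes the Stacey--Roberts Lemma (postcomposition with a surjective submersion is a submersion of mapping spaces) and takes $\beta_*^{-1}(\id_M)$; your adapted-tubular-neighbourhood argument is precisely the content of that lemma, so nothing essential is gained or lost.
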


\begin{proof}
Recall that $\Diff (M)$ is an open submanifold of $\CfS (M,M)$ (see \cite[Theorem 10.4]{michor1980} for the manifold structure). Further, $\alpha_* \colon \CfS (M,G) \rightarrow C^\infty (M,M), f \mapsto \alpha \circ f$ is continuous (even smooth) by \cite[Corollary 10.14]{michor1980}. Since $\beta \colon G \rightarrow M$ is a submersion, the Stacey-Roberts Lemma \cite[Lemma 2.4]{AS17} asserts that $\beta_* \colon \CfS (M,G) \rightarrow \CfS (M,M)$ is a submersion, whence the restriction $\theta \coloneq \beta_*|_{\alpha_*^{-1} (\Diff (M))}$ is a submersion. We deduce that $\theta^{-1} (\id_M) = \Bis (\cG)$ is a submanifold of $C^\infty (M,G)$. Similarly, since $\Diff_c (M) = \{ g \in \Diff (M) \mid \exists K \subseteq M \text{ compact such that } g|_{M\setminus K} \equiv \id_M\}$ is an open subgroup of $\Diff (M)$, we see that $\kappa \coloneq \beta_*|_{\alpha_*^{-1} \big(\Diff_c (M)\big)}$ is a submersion and $\Bis_c (\cG) = \kappa^{-1} (\id_M)$ is thus an open submanifold of $\Bis (\cG)$ and also of $\CfS (M,G)$.

To see that these manifold structures turn $\Bis_c (\cG)$ and $\Bis (\cG)$ into Lie groups, one rewrites the formulae for the group operations as follows:
\begin{align*}
\sigma \star \tau = m_* \big(\sigma, \text{Comp} (\tau, \alpha_* (\sigma))\big) \qquad
\sigma^{-1} = I_* \circ \text{Comp} (\sigma , \iota \circ \alpha_* (\sigma))
\end{align*}
where $m$ is multiplication, $I$ inversion in $\cG$ and $\iota$ is inversion in the Lie group $\Diff (M)$ (cf.\ \cite[Theorem 11.11]{michor1980}). We have already seen that the postcomposition mappings are smooth and since $\alpha_* (\Bis (\cG)) \subseteq \Diff (M)$ and all diffeomorphisms are proper, we exploit that the composition map $\text{Comp} \colon C^\infty (M,G) \times \text{Prop} (M,M) \rightarrow C^\infty (M,G)$ is smooth by \cite[Theorem 11.4]{michor1980}. In conclusion the group operations are smooth as composition of smooth mappings
\end{proof}

In the (inequivalent) convenient setting of global analysis a similar result was established in \cite{MR1943713} for bisection groups of finite-dimensional Lie groupoids.
\begin{rem}
Similar to the arguments in \cite{MR3351079} one can show that the Lie groups $\Bis (\cG)$ and $\Bis_c (\cG)$ are regular (in the sense of Milnor) and their Lie algebras are (anti-)isomorphic to the Lie algebra of sections of the Lie algebroid associated to $\cG$.

Furthermore, we remark that the Lie group of bisections contains a surprising amount of information on the underlying Lie groupoid. For example, one can use it to reconstruct the Lie groupoid from the action of the bisections on the base manifold under certain circumstances. We refer to \cite{MR3573833} for more information.

However, neither of these results are needed in the present paper.
\end{rem}

The other infinite-dimensional Lie group we wish to consider arises as a group of units of a monoid of smooth self maps of the Lie groupoid (cf.\ \cite{Amiri2017,AS17}).
\begin{defn}\label{defn: SG}
Let $\cG = (G \toto M)$ be a Lie groupoid, then we define the set
$$\SG \coloneq \big\{f \in C^\infty (G,G) \mid \beta \circ f = \alpha \big\},$$
which becomes a monoid with respect to the binary operation
$$(f \star g) (x) = f(x)g\big(xf(x)\big), \quad f,g \in S_\cG , \ x \in G.$$
The source map $\alpha$ is the unit element of $S_\cG$ (Recall the identification $M \subseteq G$!). Finally, we let $\SG (\alpha) \coloneq \{ f \in \SG \mid f \text{ is invertible in } \SG\}$ be the group of units of $\SG$.
\end{defn}

Note that $\SG(\alpha)$ can be characterised as $\{f\in \SG \mid R(f) \in \Diff (G), \text{ where } R_f (x) \coloneq xf(x)\}.$
It was shown in \cite[Theorem B]{AS17} that $\SG (\alpha)$ is an infinite-dimensional Lie group such that the map $R\colon \SG(\alpha) \rightarrow \Diff (G), f \mapsto R_f$ is a diffeomorphism onto a closed Lie subgroup of $\Diff (G)$ (in fact $R$ is a Lie group anti-homomorphism).

\begin{rem}
 \begin{enumerate}
  \item In general, the monoid $\SG$ will not be a differentiable monoid (though its group of units will always be a Lie group!). If one assumes that the underlying Lie groupoid is $\alpha$-proper\footnote{A Lie groupoid is $\alpha$-proper if the source map $\alpha$ is a proper mapping.} then $\SG$ is a differentiable monoid.
  \item By switching the r\^{o}les of $\alpha$ and $\beta$ in the definition, One can define another group of bisections and another monoid $S_\cG'$ of smooth self maps.
  However, the resulting groups and monoids are isomorphic (cf.\ e.g.\ \cite[Proposition 1]{Amiri2017} and \cite{Mackenzie05,MR3351079}), whence it suffices to consider only $\SG (\alpha)$ and $\Bis (\cG)$.
  Note that with our choice of conventions, \cite[Theorem 4.11]{AS17} asserts that the map
  $$ \Psi \colon \Bis (\cG) \rightarrow \SG (\alpha), \quad \sigma \mapsto \sigma \circ \alpha$$
  is a group monomorphism and if $\cG$ is $\alpha$-proper, $\Psi$ is even a Lie group morphism.
 \end{enumerate}\label{rem: op:sub}
\end{rem}

\subsection*{Representations of Lie groups and groupoids}\label{sect: rep}
\addcontentsline{toc}{subsection}{Representations of Lie groups and groupoids}
In this section we recall and fix notation for representations of Lie groupoids and infinite-dimensional Lie groups.
Let us first fix some notation before we turn to representations of possibly infinite-dimensional Lie groups, cf.\ \cite{neeb2006,MR2719276}).

\begin{setup}
Let $V$ be a (possibly infinite-dimensional) locally convex vector space.
We denote by $\GL (V)$ the group (with respect to composition) of continuous linear vector space automorphisms of $V$.
Recall that if $V$ is not a Banach space, $\mathcal{L} (V,V)$ does not inherit a suitable locally convex topology from $V$, i.e.\ the correspondence
$$\{\text{topological group morphism } T \rightarrow \GL(V)\} \stackrel{1:1}{\longleftrightarrow} \{ T \times V \rightarrow V \text{ cont. representation}\}$$
breaks down.
Thus we refrain from using a topological or smooth structure on $\GL (V)$.
\end{setup}

\begin{defn}\label{defn: cont:smoo}
 Let $K$ be a (possibly infinite-dimensional) Lie group and $(\rho, V)$ be a representation of $K$ on a locally convex vector space $V$ (i.e.\ $\rho \colon K \rightarrow \GL (V)$ is a group morphism). We say that
 \begin{itemize}
  \item $(\rho,V)$ is \emph{continuous} (or \emph{joint smooth}) if the action map $K \times V \rightarrow V, (k,v) \mapsto \rho (k).v$ is continuous (or smooth),
  \item a vector $v \in V$ is a \emph{$C^k$-vector}, $k \in \N \cup \{\infty \}$ if the orbit map $\rho^v \colon K \rightarrow V, k \mapsto \rho(k).v$ is a $C^k$-map. We denote by $V^k \coloneq V^k (\pi)$ the linear subspace of $C^k$-vectors,
  The representation $(\rho,V)$ is \emph{smooth} if $V^\infty$ is dense in $V$.
 \end{itemize}
\end{defn}

The continuity and smoothness of representations is tailored to avoid topological data on $\GL(V)$.
In particular, we chose to outline the concept of "smooth representations" which is usually used in infinite-dimensional representation theory (and to distinguish it from the stronger concept of joint smoothness exhibited above).
However, in the special situation we are working in, it will turn out that the representations are joint smooth and take their values in a certain subgroup of $\GL(V)$, the group of semilinear automorphisms (cf.\ \cite{MR1958838}). We review this group in Section \ref{sect: semi} below.

\begin{defn}
Let $(\rho_1,V_1)$ and $(\rho_2,V_2)$ be representations of a (possibly infinite-dimensional) Lie group $K$ on locally convex spaces. A \emph{morphism of representations} is a continuous linear map $L \colon V_1 \rightarrow V_2$ which is $K$-equivariant, i.e.\ $\rho_2 (k).L = L\circ \rho_1 (k)\ \text{for all}\ k \in K$.
Together with morphisms of representations, the representations of a Lie group $K$ form a category, which we denote by $\Rep (K)$.
\end{defn}

We now switch from representations of (possibly infinite-dimensional) Lie groups to representations of finite-dimensional Lie groupoids. Our notation here mainly follows the book \cite{Mackenzie05}.

\begin{defn}\label{Frame} Let $(E,\pi ,M)$ be a vector bundle. Let $\Phi(E)$ denote
the set of all vector space isomorphisms $\xi: E_x\rightarrow E_y$ for $x, y\in M$. Then
$\Phi(E)$ is a Lie groupoid on $M$ with respect to the following structure:\begin{itemize}
\item for $\xi\colon E_x\rightarrow E_y$, $\alpha(\xi)$ is $x$ and $\beta(\xi)$ is $y$;
\item the object inclusion map is $x\mapsto 1_x= id_{E_x}$,
\item multiplication and inversion is given by composition and inversion of linear maps.
\end{itemize}
 We call the resulting groupoid $\Phi(E)$ the \emph{frame groupoid} (or linear frame groupoid) of $(E,\pi,M)$.
\end{defn}

\begin{rem}[{\cite[p.8 and Example 1.3.4]{Mackenzie05}}]
The frame groupoid $\Phi(E)$ of a vector bundle $E$, as
defined in the Definition \ref{Frame}, is a locally trivial Lie groupoid, i.e.\ the anchor map $(\beta, \alpha)\colon \Phi(E)\rightarrow M\times M$ is a surjective submersion.
\end{rem}

\begin{defn}
Let $\cG$ be a Lie groupoid on $M$ and $(E, \pi, M)$ be a vector bundle.
Then a \emph{representation} of $\cG$ on $(E,\pi, M)$ is a base preserving morphism $\RepG\colon \cG\rightarrow \Phi(E)$ of Lie groupoids over $M$, where $\Phi(E)$ is the frame groupoid of $(E ,\pi, M)$.
\end{defn}

\begin{rem}\label{rem: linRep}
Every representation $\RepG \colon \cG \rightarrow \Phi(E)$ induces a smooth linear action\footnote{Compare \cite[Definition 1.6.1 and Proposition 1.7.2]{Mackenzie05} where it is shown that this is even equivalent (note that loc.cit.\ claims this only for $\cG$ locally trivial, though the argument holds for arbitrary Lie groupoids.}
$$\ActG \colon G \times_\alpha E \rightarrow E,\quad (g,e) \mapsto \phi(g).e,$$
where $G \times_\alpha E :=\{(g,e) \in G\times E\mid \alpha (g) = \pi(e)\}$ is the pullback manifold.
In the following we will always denote by $\ActG$ the linear action associated to the Lie groupoid representation $\RepG$.
\end{rem}

\begin{defn}
Let $\cG = (G\toto M)$ be a Lie groupoid and $(E,\pi,M), (F,q,M)$ vector bundles.
A \emph{morphism of Lie groupoid representations} $\RepG_1 \colon \cG \rightarrow \Phi(E)$ and $\RepG_2 \colon \cG \rightarrow \Phi(F)$ is a base preserving bundle morphism $\delta \colon E \rightarrow F$ which is equivariant with respect to the linear action $\ActG_1$ and $\ActG_2$, i.e.\
$$\delta \circ \ActG_1 (g,\cdot) = \ActG_2 (g,\cdot)\circ \delta \quad \forall g\in G.$$
The representations of $\cG$ form together with morphisms of representations a category which we denote by $\Rep (\cG)$
\end{defn}

\begin{ex}\label{ex: gpd:rep}
Let $(E,\pi,M)$ be a vector bundle and $K$ a Lie group.
\begin{itemize}
\item[(a)] If $M = \{ *\}$ is the one-point manifold and $G$ a Lie group, then a representation of the Lie groupoid $G \toto \{*\}$ on $E$ is a representation of the Lie group $G$ on the vector space $E$. Hence $\Rep (G) = \Rep (G\toto \{*\})$ and therefore there is no ambiguity in our notation for the categories of representations.
\item[(b)] For the pair groupoid $\mathcal{P} (M)$, a representation on $E$ corresponds to a trivialisation of the bundle $(E,\pi,M)$ (cf.\ \cite[Example 2.2]{MR3696590}).
\item[(c)] For the trivial Lie group bundle $LB(K) \coloneq (K\times M \toto M)$ over $M$ with fibre $K$ (viewed as a Lie groupoid), representations on $E=TM$ (the tangent bundle of $M$) are trivial (cf.\ \cite[Example 7.2]{MR3696590}).
\item[(d)] Let $K \times M \rightarrow M$ be a (left) Lie group action. Representations of the associated action groupoid $K \ltimes M \toto M$ correspond to $K$-equivariant bundles over $M$.
Generalising this example, the representations of an atlas groupoid $\Gamma (\mathcal{U})$ representing an orbifold $(Q,\mathcal{U})$ correspond to orbifold vector bundles over $(Q,\mathcal{U})$ which arise e.g.\ in orbifold cohomology (cf.\ \cite[Section 2.3]{MR2359514} and see \cite{AS17} for a discussion of source-properness for atlas groupoids).
\end{itemize}
Contrary to the situation for Lie groups, general Lie groupoids do not admit an "adjoint representation" (see however \ref{setup: jetgpd} and Example \ref{ex: repjt}). As a consequence, one was led to the generalised notion of a "representations up to homotopy", see \cite{MR3696590}. In the present article, we will only consider the classical concept, which appears naturally in applications (e.g.\ in the representation theory of $C^*$-algebras) and translates to the right concept of Lie group  representation as explained below.
\end{ex}

We will see in Section \ref{sect:reps} that representations of a Lie groupoid on a vector bundle induce representations of the infinite-dimensional groups on the sections of this bundle. Since smooth functions act in a natural way on sections of a vector bundle, one wonders in which way the induced representations are compatible with this structure. To answer this question we recall the notion of \emph{semi-linear} automorphisms of a module.

\section{The group of semi-linear automorphisms}\label{sect: semi}

In this section we discuss the group of semi-linear automorphisms of a module of smooth functions. It turns out that semi-linear automorphisms of a space of smooth sections are closely connected to the bisections of a the frame groupoid.

\begin{setup}
For a smooth finite dimensional manifold $M$ we consider
$$C^\infty_c (M) \coloneq \{f\in \CfS (M,\R) \mid \text{there is } K \subseteq M \text{ compact, with }  f|_{M\setminus K} \equiv 0 \}$$
with the canonical topological algebra structure.
 In general $C^\infty_c (M)$ is a non-unital topological algebra, as the constant $1$-function is not contained in $C^\infty_c(M)$ if $M$ is non-compact.
 However, if $M$ is compact, $C^\infty (M)$ is a (unital) \Frechet -algebra and even a continuous inverse algebra \cite{MR2501579}.
\end{setup}

\begin{defn}\label{semi}
Let $\mathbb{M}$ be a $C^\infty_c (M)$-module. An $\R$-linear automorphism $\mu \colon \mathbb{M} \rightarrow \mathbb{M}$ is called \emph{semi-linear isomorphism} of $\mathbb{M}$ if there exists an algebra isomorphism, $\mu^M \colon C^\infty_c (M) \rightarrow C^\infty_c (M)$ satisfying
$$\mu (f.X)=\mu^M(f).\mu(X)\ \ \text{for all } f\in C^{\infty}(M) \text{ and all}\  X\in \mathbb{M}.$$
Let $\SL (\mathbb{M})$ be the \emph{set of all semi-linear automorphisms} of $\mathbb{M}$ and note that composition turns $\SL (\mathbb{M})$ into a group.
\end{defn}

\begin{rem}\label{rem: assoc:diffeo}
Recall from \cite{MR2159792,MR0177002} that every algebra automorphism $\mu^M$ of  $C^\infty_c (M)$ is induced by a diffeomorphism $\mu_M \in \Diff (M)$, i.e.\
$$\mu^M (f) = f \circ (\mu_M)^{-1} \equalscolon \mu_M.f,\quad \forall f\in C^\infty_c (M).$$
We call $\mu_M$ the \emph{diffeomorphism associated to $\mu^M$} and denote its (right) action on $C^\infty_c (M)$ by $\mu_M.f$.
Note that a similar construction even hold for $C^\infty (M)$ where $M$ is allowed to be infinite-dimensional (see \cite{MR2161810}).
\end{rem}

\begin{defn}\label{semirep}
 Let $G$ be a Lie group and $\M$ be a $C^\infty_c (M)$-module which is in addition a locally convex vector space. We call a representation $(\rho, \M)$ of $G$ a \emph{semi-linear representation} if $\rho (G) \subseteq \SL(\M)$.
 A semi-linear representation is continuous (smooth) if it is continuous (smooth) as a representation in the sense of Definition \ref{defn: cont:smoo}.
\end{defn}

In the present article, we are primarily interested in semi-linear representations of Lie groups on the module of compactly supported sections of vector bundles over a fixed base $M$. The reason is that for non compact $M$, the topology of the spaces $\Gamma_c(E)$ is amenable to our methods, whereas the one on $\Gamma(E)$ is not. However, in the algebraic setting, e.g.\ \cite{MR1958838} and \cite{MR2501579} (where $M$ is assumed to be compact), proofs for the spaces of compactly supported and general sections are completely analogous. Thus in the (purely) algebraic setting we present the proofs for both $\Gamma (E)$ and $\Gamma_c(E)$.

\begin{ex}\label{ex: iso:sl}
Let  $(E, \pi, M)$ be a vector bundle over $M$. It is well known that $\Gamma(E)$ and $\Gamma_c (E)$ are $C^\infty_c (M)$ modules with respect to the pointwise multiplication.
As a consequence of \cite{MR1958838}, the group $\SL (\Gamma (E))$ is isomorphic to $\Bis (\Phi(E))$ and to the group of smooth bundle automorphisms $\Aut(E)$.
For the readers convenience, Proposition \ref{prop: iso:VB} recalls these details and establish an isomorphism $\SL(\Gamma_c(E)) \cong \Bis (\Phi(E))$.\footnote{The construction in \cite{MR1958838} uses the alternative convention for bisections (reversing the r\^{o}le of $\alpha$ and $\beta$). Furthermore, a crucial step uses \cite[Proposition 4]{MR0438405} for which a complete proof (for which we thank K.--H.\ Neeb) is only contained in \cite{MR2501579}.}
\end{ex}

The isomorphism in Example \ref{ex: iso:sl} allows us to turn $\SL (\Gamma_c (E))$ into an infinite-dimensional Lie group (see \cite{MR2501579} for the special case of $M$ being compact).
The following constructions are purely algebraic, whence independent of topologies on $C^\infty_c (M), \Gamma(E)$ and $\Gamma_c(E)$. As a first step we extract from \cite[Example 2.5]{MR2501579} the following.

\begin{lem}\label{lem: bunaut:SL}
Let $(E,\pi, M)$ be a smooth vector bundle and $\M \in \{ \Gamma (E), \Gamma_c (E)\}$. Then $\mu \in \SL (\M)$ induces a (smooth) bundle automorphism $\phi_\mu$ which acts by the diffeomorphism $\mu_M$ (Remark \ref{rem: assoc:diffeo}) on $M$.
\end{lem}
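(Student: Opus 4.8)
The plan is to prove that a semi-linear automorphism of $\M$, though a priori only defined through its action on (compactly supported) sections, is in fact \emph{pointwise}, i.e.\ it is the push-forward action of a single fibrewise-linear bundle isomorphism. The starting point is Remark \ref{rem: assoc:diffeo}: it gives the diffeomorphism $\mu_M \in \Diff (M)$ with $\mu^M(f) = f \circ \mu_M^{-1}$. I would first record that $\mu^{-1} \in \SL (\M)$ as well, with associated algebra automorphism $(\mu^M)^{-1}$ and associated diffeomorphism $(\mu_M)^{-1}$ (apply $\mu^{-1}$ to the identity $\mu(f.X) = \mu^M(f).\mu(X)$ and relabel); this will be used to produce the inverse of the fibre maps below.

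The first substantial step is a \emph{locality} statement: if $X \in \M$ vanishes on an open neighbourhood $U$ of $m$, then $\mu(X)$ vanishes on a neighbourhood of $\mu_M(m)$. Indeed, pick $f \in C^\infty_c(M)$ with $f(m) = 1$ and $\supp f \subseteq U$; then $f.X = 0$, so $0 = \mu(f.X) = \mu^M(f).\mu(X)$, and $\mu^M(f) = f\circ\mu_M^{-1}$ is nonzero on the neighbourhood $\mu_M(\{f \neq 0\})$ of $\mu_M(m)$, forcing $\mu(X)$ to vanish there. The second step upgrades this to \emph{pointwise} dependence. Fix $m$, a trivialising neighbourhood $U \ni m$ with local frame $e_1,\dots,e_r$, and $\chi \in C^\infty_c(M)$ with $\chi \equiv 1$ near $m$ and $\supp \chi \subseteq U$. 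Writing $X = \sum_i g_i e_i$ on $U$, we have $\chi^2 X = \sum_i (\chi g_i).(\chi e_i)$ with $\chi g_i \in C^\infty_c(M)$ and $\chi e_i \in \M$, hence $\mu(\chi^2 X) = \sum_i \mu^M(\chi g_i).\mu(\chi e_i)$; evaluating at $\mu_M(m)$ and using $\chi(m)=1$ gives $\mu(\chi^2 X)(\mu_M(m)) = \sum_i g_i(m)\,\mu(\chi e_i)(\mu_M(m))$. Since $X - \chi^2 X$ vanishes near $m$, step one yields $\mu(X)(\mu_M(m)) = \sum_i g_i(m)\,\mu(\chi e_i)(\mu_M(m))$, which depends only on $X(m) = \sum_i g_i(m) e_i(m)$. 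As every $v \in E_m$ equals $X(m)$ for some $X \in \M$ (extend by a bump function), this well-defines an $\R$-linear map $E_m \to E_{\mu_M(m)}$, $X(m) \mapsto \mu(X)(\mu_M(m))$; the analogous map built from $\mu^{-1}$ is a two-sided inverse, so it is an isomorphism.

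Finally, I would assemble these fibrewise maps into $\phi_\mu \colon E \to E$ determined by $\phi_\mu(X(m)) = \mu(X)(\mu_M(m))$. By construction $\pi \circ \phi_\mu = \mu_M \circ \pi$, so $\phi_\mu$ covers $\mu_M$, and $\mu(X) = \phi_\mu \circ X \circ \mu_M^{-1}$ for all $X \in \M$. Smoothness is read off the formula above: near a fixed $m_0$, with $\chi$ as in step two and $\chi \equiv 1$ on a neighbourhood $U_0 \ni m_0$, one has $\phi_\mu(e_j(m)) = \mu(\chi e_j)(\mu_M(m))$ for $m \in U_0$; since $\mu(\chi e_j) \in \M \subseteq \Gamma(E)$ is a smooth section and $\mu_M$ is smooth, the coefficients of $\phi_\mu(e_j(m))$ with respect to a local frame around $\mu_M(m_0)$ depend smoothly on $m$, so $\phi_\mu$ is smooth; being fibrewise linear and bijective over the diffeomorphism $\mu_M$, it is a (smooth) bundle automorphism acting by $\mu_M$ on $M$. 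The crux of the argument — and the only point needing genuine care — is the passage from locality to pointwise dependence, i.e.\ the fact that $C^\infty_c(M)$-semi-linearity (as opposed to mere $\R$-linearity together with locality) excludes differential-operator-type behaviour; this is exactly where the module structure and the abundance of compactly supported bump functions on the paracompact manifold $M$ enter, and the reasoning applies verbatim to both $\M = \Gamma (E)$ and $\M = \Gamma_c (E)$.
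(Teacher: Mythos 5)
Your proof is correct and follows essentially the same route as the paper: establish that $\mu(X)(\mu_M(m))$ depends only on $X(m)$, define $\phi_\mu$ fibrewise over $\mu_M$, and verify smoothness via a local frame. The only difference is that the paper obtains the well-definedness step from the identification $\M/I_m\M\cong E_m$ (kernel of evaluation equals $I_m\M$, quoted from the literature) together with $\mu^M(I_m)=I_{\mu_M(m)}$, whereas you reprove this by hand with bump functions and the cutoff decomposition $\chi^2 X=\sum_i(\chi g_i).(\chi e_i)$, making the argument more self-contained.
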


\begin{proof}
Consider the maximal ideal $I_m \coloneq \{f\in C^\infty_c (M) \mid f(m) = 0\}$. Using local triviality of the bundle $E$, it is easy to see \cite[Lemma 11.8]{MR1930277}, that the kernel of the evaluation map $\ev_m (X) = X(m)$ is $I_m \M$. This yields a vector space isomorphism $\M / I_m \M \cong E_m$ and we identify $E$ with the disjoint union $E = \bigsqcup_m \M /I_m\M $.

For $\mu \in \SL(\M)$ we denote the corresponding algebra isomorphism of $C^\infty_c (M)$ by $\mu^M$ and the induced diffeomorphism by $\mu_M \in \Diff (M)$ (cf.\ Remark \ref{rem: assoc:diffeo}).
Then $\mu^M (I_m) = I_{\mu_M (m)}$ and thus $\mu(X+I_{m}\M)=\mu(X)+I_{\mu_M (m)}\M$. In conclusion, the value $\mu(X)(\mu_M(m))$ does only depend on the value of $X(m)$, whence we obtain a well-defined map
\begin{equation}\label{eq: bun}
\phi_\mu \colon E \rightarrow E,\quad v \mapsto \mu (X_v)(\mu_M (\pi(v))),\quad \text{where } X_v (\pi(v))=v .
\end{equation}
Since $\mu$ is $\R$-linear, $\phi_\mu$ is linear in each fibre and it is a bijection since $\mu$ is an isomorphism of $\M$.
To see that $\phi_\mu$ is smooth, one works in a neighbourhood of $m\in M$ and chooses a subset $\{X_1,\ldots , X_k\} \subseteq \M$ such that in an $m$-neighbourhood the elements $\{X_1 (n),\ldots X_k(n)\}$ form a base of $T_nM$. Applying \eqref{eq: bun} to this family we obtain $\phi_\mu (e, \sum_{i=1}^k s_i X_i (e)) = (\mu_M (e), \sum_{i=1}^k s_i \mu (X_i)(\mu_M (e))$ which is clearly smooth.
\end{proof}

\begin{prop}\label{prop: iso:VB}
Let $(E,\pi,M)$ be a vector bundle over the finite-dimensional manifold $M$ and $\Phi (E)$ be its frame groupoid. Then there is a group isomorphism
\begin{align}
\gamma \colon \Bis (\Phi (E)) \rightarrow \SL(\Gamma_c (E)),\quad \gamma(\sigma): \xi \mapsto (x\mapsto \sigma (x).\xi (\alpha \circ \sigma (x))),\label{BIS:SL}
\end{align}
which factors through a group isomorphism $\Bis(\Phi(E)) \cong \Aut(E)$.
\end{prop}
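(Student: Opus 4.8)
The plan is to construct the map $\gamma$ explicitly, verify it lands in $\SL(\Gamma_c(E))$, exhibit an inverse, and then identify both sides with $\Aut(E)$ using Lemma \ref{lem: bunaut:SL}. First I would check that $\gamma(\sigma)$ is well-defined: for $\sigma \in \Bis(\Phi(E))$, each $\sigma(x)$ is a linear isomorphism $E_{\alpha(\sigma(x))} \to E_x$, so $x \mapsto \sigma(x).\xi(\alpha\circ\sigma(x))$ is a section of $E$; compact support is preserved because $\alpha\circ\sigma \in \Diff(M)$ is proper (and maps the complement of a compact set to the complement of a compact set), and smoothness follows since $\sigma$, $\xi$ and the groupoid multiplication of $\Phi(E)$ are smooth. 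Next I would verify $\gamma(\sigma)$ is semi-linear with associated algebra automorphism $f \mapsto f\circ(\alpha\circ\sigma)^{-1}$: for $f \in C^\infty(M)$ one computes $\gamma(\sigma)(f.\xi)(x) = \sigma(x).\big(f(\alpha\sigma(x))\,\xi(\alpha\sigma(x))\big) = f(\alpha\sigma(x))\cdot\gamma(\sigma)(\xi)(x)$ using $\R$-linearity of $\sigma(x)$, and $f\circ(\alpha\circ\sigma) = (f\circ(\alpha\circ\sigma)^{-1}\circ\alpha\circ\sigma)$ gives the claimed form after substituting $x \mapsto (\alpha\sigma)^{-1}(x)$ — so the associated diffeomorphism $\mu_M$ in the sense of Remark \ref{rem: assoc:diffeo} is $(\alpha\circ\sigma)^{-1}$, up to the convention on left/right actions which I would pin down carefully.

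Then I would show $\gamma$ is a group homomorphism. Using the bisection multiplication \eqref{eq: bis mult} in $\Phi(E)$, namely $(\sigma_1\star\sigma_2)(x) = \sigma_1(x)\sigma_2(\alpha\sigma_1(x))$, and the fact that $\alpha\circ(\sigma_1\star\sigma_2) = (\alpha\circ\sigma_2)\circ(\alpha\circ\sigma_1)$, a direct substitution shows $\gamma(\sigma_1\star\sigma_2) = \gamma(\sigma_1)\circ\gamma(\sigma_2)$ — this is a routine computation provided one keeps the order of composition straight. Injectivity of $\gamma$ is clear: if $\gamma(\sigma) = \id$ then evaluating at sections supported near a point forces $\sigma(x) = 1_x$ for all $x$, so $\sigma$ is the identity bisection. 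For surjectivity I would invoke Lemma \ref{lem: bunaut:SL}: given $\mu \in \SL(\Gamma_c(E))$ it induces a bundle automorphism $\phi_\mu$ covering $\mu_M \in \Diff(M)$, and I would define $\sigma_\mu(x) \coloneq \phi_\mu|_{E_{\mu_M^{-1}(x)}} \colon E_{\mu_M^{-1}(x)} \to E_x$, checking that this is a smooth section of $\beta$ in $\Phi(E)$ with $\alpha\circ\sigma_\mu = \mu_M^{-1} \in \Diff(M)$, hence a bisection, and that $\gamma(\sigma_\mu) = \mu$. This last identity is again an unwinding of the definition \eqref{eq: bun} of $\phi_\mu$ together with the fact that $\mu(X)(\mu_M(m))$ depends only on $X(m)$.

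Finally, for the factorisation through $\Aut(E)$, I would note that the assignment $\sigma \mapsto \phi_{\gamma(\sigma)}$ (equivalently, the inverse construction $\sigma_\mu \mapsto \phi_\mu$) sets up a bijection $\Bis(\Phi(E)) \to \Aut(E)$, $\sigma \mapsto (x,v) \mapsto \sigma(x).v$ reading $v \in E_{\alpha\sigma(x)}$ — this is manifestly a base-preserving bundle automorphism covering $\alpha\circ\sigma$, it is smooth by local triviality, and it is a homomorphism by the same bisection-multiplication computation as above; conversely any $\psi \in \Aut(E)$ covering $g \in \Diff(M)$ yields the bisection $x \mapsto \psi|_{E_{g^{-1}(x)}}$. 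Composing, $\gamma$ factors as $\Bis(\Phi(E)) \xrightarrow{\cong} \Aut(E) \xrightarrow{\cong} \SL(\Gamma_c(E))$, where the second arrow sends a bundle automorphism to its induced action on compactly supported sections. The main obstacle I anticipate is purely bookkeeping: keeping the left/right action conventions of Remark \ref{rem: assoc:diffeo}, the bisection inversion formula, and the order of composition in $\Phi(E)$ mutually consistent, so that $\gamma$ comes out a homomorphism rather than an anti-homomorphism — no deep difficulty, but the place where a sign-type error would hide. The compact-support bookkeeping (properness of diffeomorphisms) is the only spot where the $\Gamma_c$ case genuinely differs from the $\Gamma(E)$ case treated in \cite{MR1958838}, and it is handled by the observation already used in Proposition \ref{prop: BIS:LGP} that diffeomorphisms are proper.
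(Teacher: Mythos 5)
Your proposal is correct and follows essentially the same route as the paper: the inverse is built from Lemma \ref{lem: bunaut:SL} exactly as in the paper's Step 3, compact support is handled by properness of $\alpha\circ\sigma$, and the factorisation through $\Aut(E)$ uses the same maps $v\mapsto\sigma\big((\alpha\circ\sigma)^{-1}(\pi(v))\big).v$ and $(F,b_F)\mapsto(\xi\mapsto F\circ\xi\circ b_F^{-1})$ that the paper composes to define $\gamma$. One bookkeeping slip of precisely the kind you anticipated: that bundle automorphism covers $(\alpha\circ\sigma)^{-1}$, not $\alpha\circ\sigma$, though this does not affect the argument.
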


\begin{proof} We establish first the isomorphism $\Bis(\Phi(E)) \cong \text{Aut} (E)$. Then we prove that $\text{Aut} (E)$ is isomorphic to $\SL (\Gamma_c(E))$.

\paragraph{Step 1:} \emph{$\Bis (\Phi (E))$ is isomorphic to  $\Aut (E)$.}
Consider the mapping
\begin{align}
B \colon \Bis (\Phi (E)) \rightarrow \Aut (E) , \text{ with } B(\sigma) (v) = \sigma \big((\alpha \circ \sigma)^{-1} (\pi (v))\big).v . \label{bis:bun}
\end{align}
By definition of the frame groupoid, $B(\sigma)$ is contained in $\Aut(E)$ for every $\sigma \in \Bis (\Phi(E))$. Further, it is easy to check that the automorphism $B(\sigma)$ covers $(\alpha \circ \sigma)^{-1}$ (i.e.\ $\pi \circ B(\sigma) = (\alpha \circ \sigma)^{-1} \circ \pi$).
Let us now present a bundle automorphism $F$ as a pair $(F, b_F)$, where $F$ covers $b_F \in \Diff (M)$. Then an inverse of \eqref{bis:bun} is given by the map
\begin{align}
\Aut (E) \rightarrow \Bis (\Phi (E)), (F, b_F) \mapsto (x \mapsto \sigma_F (x) \coloneq F|_{E_{b_F^{-1} (x)}})\label{bun:bis}
\end{align}
We note that \eqref{bun:bis} makes sense as smoothness of $(x \mapsto \sigma_F (x))$ can be established by local triviality of $E$ (cf.\ \cite[Proof of Proposition 5.4]{MR1958838})
An easy but tedious computation shows that $B$ and its inverse are group (iso-)morphisms, whence we can identify $\Bis (\Phi(E))$ with $\Aut (E)$.
\paragraph{Step 2:} \emph{A group monomorphism $\Aut (E) \rightarrow \SL (\Gamma_c(E))$.}
Consider the map
\begin{align}
\nu \colon \Aut (E) \rightarrow \SL (\Gamma_c (E)), (F,b_F) \mapsto (\xi \mapsto F\circ \xi \circ b_F^{-1}).\label{AUT:SL}
\end{align}
Note that for every $\xi \in \Gamma_c (E))$, also the section $\nu (F,b_F).\xi$ has compact support (as  $b_F \in \Diff (M)$). Thus \eqref{AUT:SL} makes sense and $F\circ (f\cdot \xi) \circ b_F^{-1} = (b_F^{-1})^*(f)\cdot (F\circ \xi \circ b_F^{-1})$ shows that we obtain a semi-linear isomorphism of $\Gamma_c(E)$.
A straight forward calculation shows that \eqref{AUT:SL} is a group (mono-)morphism.

\paragraph{Step 3:} \emph{The inverse of $\nu$.}
Let us explicitly construct an inverse for $\nu$. Recall from Lemma \ref{lem: bunaut:SL} that to every $\mu \in \SL (\Gamma_c (E))$ there is an associated bundle automorphism $\phi_\mu$ of $E$ Hence we can define
$$\nu^{-1} \colon \SL \big(\Gamma_c (E)\big) \rightarrow \Aut (E), \mu \mapsto (\phi_\mu, \mu_M).$$
To see that this map inverts $\nu$ we compute for $\mu \in \SL (\Gamma_c (E))$ as follows $\nu(\nu^{-1} (\mu)) = (\xi \mapsto \phi_\mu \circ \xi \circ \mu_M^{-1})$.
Evaluating the sections in $x\in M$ we see that $\nu (\nu^{-1}(\mu))(\xi)(x) = \mu(\xi)(x), \forall x \in M$, whence $\nu \circ \nu^{-1}$ is the identity on $\SL (\Gamma_c(E))$.
Conversely, let us prove that $\nu^{-1}\circ\nu$ is the identity.
If $(F,b_F)\in \Aut (E)$, then by the Step 2 $\mu(F) \coloneq \nu(F,b_F)$ satisfies $\mu (F) (\xi) = F\circ\xi\circ b_F^{-1}$, whence $\mu_F$ is a  semi-linear isomorphism on $\SL (\Gamma_c (E))$ with associated diffeomorphism $\mu(F)_M = b_F$ (cf.\ Remark \ref{rem: assoc:diffeo}).
Now we will show that $\nu^{-1}(\nu(F,b_F))=\nu^{-1}(\mu(F))=(F,b_F)$:
It is enough to show that on each fibre $E_m, m\in M$ we have equality of $\nu^{-1}(\mu(F))$ and $F$.
Consider $x\in E_m$ and $X\in\Gamma_c(E)$ be such that $\ev_m(X)=X(m)=x$.
Now
\begin{equation}\label{eq: bf:ident}
\nu^{-1}(\mu(F))(x)=\mu(F)(X)(b_F(m))=F\circ X\circ b_F^{-1}(b_F(m)) =F(x)
\end{equation}
As $x$ was arbitrary, \eqref{eq: bf:ident} implies $\nu^{-1}\circ \nu((F,b_F))=(F,b_F)$.
\smallskip

Composing the group isomorphisms $\nu$ and $B$ we obtain \eqref{BIS:SL}.
\end{proof}

We will now always endow $\SL(\Gamma_c(E))$ with the unique infinite-dimensional Lie group structure making it isomorphic to the Lie group $\Bis (\Phi(E))$ (see Proposition \ref{prop: BIS:LGP}). For $M$ compact this has been studied in \cite{MR2501579} in the context of Lie group extensions associated to actions on continuous inverse algebras. Furthermore, it is a classical result that the group $\Aut (E)$ of bundle automorphisms can be turned into an infinite-dimensional Lie group (see e.g.\ \cite{MR1040392}).
Comparing the two Lie group structures, it is then easy to see that the group isomorphism $\Bis (\Phi (E)) \cong \Aut (E)$ becomes a diffeomorphism, whence the smooth structures on both groups are isomorphic:

\begin{cor}
Let $(E,\pi, M)$ be a smooth vector bundle. Then $\Bis (\Phi (E))$ and $\Aut (E)$ are isomorphic as Lie groups.
\end{cor}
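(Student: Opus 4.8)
The plan is to show that the group isomorphism $B \colon \Bis (\Phi(E)) \to \Aut (E)$ from Proposition \ref{prop: iso:VB} is a diffeomorphism, where both sides carry their natural Lie group structures: on the left the structure from Proposition \ref{prop: BIS:LGP} (as a submanifold of $\CfS(M,\Phi(E))$), and on the right the classical structure on the automorphism group of a vector bundle as described in \cite{MR1040392}. Since both $B$ and $B^{-1}$ are already known to be group homomorphisms, it suffices to prove that $B$ is smooth and that $B^{-1}$ is smooth; equivalently, by the standard fact that a bijective group homomorphism of Lie groups which is smooth with smooth inverse is an isomorphism, one really only needs smoothness in both directions (there is no completeness/open-mapping subtlety to invoke because we exhibit the inverse explicitly).

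First I would recall the explicit formulae: $B(\sigma)(v) = \sigma\big((\alpha\circ\sigma)^{-1}(\pi(v))\big).v$ and $B^{-1}(F,b_F) = \big(x \mapsto F|_{E_{b_F^{-1}(x)}}\big)$. For smoothness of $B$, the key observation is that both groups are manifolds of mappings (or submanifolds thereof), so one can test smoothness against the exponential-law / pullback characterisations of \cite{michor1980}: a map into $\Aut(E) \subseteq C^\infty(E,E)$ is smooth iff the associated map $\Bis(\Phi(E)) \times E \to E$, $(\sigma, v) \mapsto B(\sigma)(v)$, is smooth. This associated map is built from the anchor evaluation $(\sigma,x)\mapsto \alpha\circ\sigma(x)$, the inversion $\sigma \mapsto (\alpha\circ\sigma)^{-1}$ in $\Diff(M)$ (smooth since $\Bis(\Phi(E))$ is a Lie group and $\alpha_*$ is smooth), the evaluation map $\ev \colon \CfS(M,\Phi(E)) \times M \to \Phi(E)$ (smooth by \cite{michor1980}), and finally the smooth action map $\Phi(E) \times_\alpha E \to E$ of the frame groupoid (Remark \ref{rem: linRep}). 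Composing these smooth building blocks gives joint smoothness of the associated map, hence smoothness of $B$. For smoothness of $B^{-1}$, the same strategy applies: the associated map $\Aut(E) \times M \to \Phi(E)$, $(F,b_F,x) \mapsto F|_{E_{b_F^{-1}(x)}}$, must be shown smooth, which follows by working in local bundle trivialisations — on a trivialising chart $F$ is represented by a smooth family of invertible matrices and $b_F$ by a diffeomorphism, exactly as in the smoothness argument already used in \eqref{bun:bis} and \cite[Proof of Proposition 5.4]{MR1958838}; this then lifts to smoothness of the map into $\CfS(M,\Phi(E))$, and since the image lies in the submanifold $\Bis(\Phi(E))$ one gets smoothness into $\Bis(\Phi(E))$ by the submanifold property.

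Assembling: $B$ is a group isomorphism that is smooth with smooth inverse, hence an isomorphism of Lie groups, which is the assertion of the corollary. I would phrase the write-up as: "By Proposition \ref{prop: iso:VB} the map $B$ of \eqref{bis:bun} is a group isomorphism. Equipping $\Bis(\Phi(E))$ with the Lie group structure of Proposition \ref{prop: BIS:LGP} and $\Aut(E)$ with its standard Lie group structure \cite{MR1040392}, it remains to check that $B$ and $B^{-1}$ are smooth. Using the exponential law for manifolds of mappings \cite{michor1980}, smoothness of $B$ reduces to joint smoothness of $(\sigma,v)\mapsto B(\sigma)(v)$, which is a composition of the smooth evaluation map, the smooth inversion in $\Diff(M)$, and the smooth action of $\Phi(E)$ (Remark \ref{rem: linRep}); smoothness of $B^{-1}$ reduces likewise to joint smoothness of $(F,b_F,x)\mapsto F|_{E_{b_F^{-1}(x)}}$, which follows from local triviality as in \eqref{bun:bis}."

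The main obstacle I anticipate is purely bookkeeping rather than conceptual: carefully matching the two a priori different Lie group structures on $\Aut(E)$ — the one transported from $\Bis(\Phi(E))$ and the classical one from \cite{MR1040392} — and making sure the exponential-law argument is applied to the correct mapping spaces (in particular that the fine very strong topology on $\CfS(M,\Phi(E))$ is the relevant one and that restriction to the submanifold $\Bis(\Phi(E))$ preserves smoothness). There is also a minor point that $\Aut(E)$ might be taken to consist of automorphisms covering arbitrary diffeomorphisms of $M$ (not just compactly supported ones), so one should confirm the two descriptions use the same underlying group; given the constructions in Proposition \ref{prop: iso:VB} this is immediate, but it is worth a sentence.
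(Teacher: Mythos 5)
Your overall plan -- exhibit the group isomorphism $B$ of \eqref{bis:bun} and check that it and its inverse are smooth for the two given Lie group structures -- is exactly what the paper intends (the paper itself only asserts that ``comparing the two Lie group structures, it is easy to see'' that $B$ is a diffeomorphism). However, the mechanism you propose for the smoothness check has a genuine gap. You reduce smoothness of $B$ into $\Aut(E)\subseteq C^\infty(E,E)$ to joint smoothness of $(\sigma,v)\mapsto B(\sigma)(v)$ via an ``iff'' exponential law. For the fine very strong topology this equivalence fails in precisely the direction you need: joint smoothness of the associated map does \emph{not} imply smoothness (or even continuity) of the map into $\CfS(E,E)$. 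Indeed, the sets \eqref{eq: cpt:nbhd} are open, so any continuous curve into $\CfS(E,E)$ must locally stay equal to its initial value outside a fixed compact set; the translations $t\mapsto(x\mapsto x+t)$ on $\R$ have a smooth associated map but are not continuous into $\CfS(\R,\R)$. This is exactly the obstruction the paper goes out of its way to avoid in Proposition \ref{prop:smooth} and Remark \ref{rem:whynot?}, where smoothness is obtained not from an exponential law but by factoring through pushforwards and the restricted composition map $\operatorname{Comp}(\cdot\,,\Prop(M,M))$. The same objection applies to your argument for $B^{-1}$, where you ``lift'' joint smoothness of $(F,b_F,x)\mapsto F|_{E_{b_F^{-1}(x)}}$ to smoothness into $\CfS(M,\Phi(E))$.

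The problem is in fact worse than a missing lemma: if $\Aut(E)$ is given the submanifold structure inherited from $\Diff(E)\subseteq\CfS(E,E)$, the map $B$ is not even continuous once $E$ has positive rank. If $\sigma$ lies in a basic neighbourhood of $\sigma_0$ in $\Bis(\Phi(E))$, then $\sigma$ and $\sigma_0$ agree off a compact $K\subseteq M$, but $B(\sigma)$ and $B(\sigma_0)$ still differ on the set $\pi^{-1}(K')$ (for a suitable compact $K'\subseteq M$), which is non-compact because the fibres of $E$ are; hence $B(\sigma)$ never enters the open set \eqref{eq: cpt:nbhd} around $B(\sigma_0)$. So the classical Lie group structure on $\Aut(E)$ from \cite{MR1040392} is \emph{not} the $\CfS(E,E)$-subspace structure; it is built from data over the base $M$ (an underlying diffeomorphism of $M$ together with, in local trivialisations, a smooth $\GL$-valued map on $M$). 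The correct comparison therefore takes place over $M$: both $\Bis(\Phi(E))\subseteq\CfS(M,\Phi(E))$ (Proposition \ref{prop: BIS:LGP}) and $\Aut(E)$ in the structure of \cite{MR1040392} are parametrised by sections of $\beta\colon\Phi(E)\rightarrow M$ satisfying the bisection condition, and the maps \eqref{bis:bun} and \eqref{bun:bis} translate one parametrisation into the other essentially chart by chart, using only evaluation, the smooth groupoid operations of $\Phi(E)$, and inversion in $\Diff(M)$. Rewriting your argument against that structure (and dropping the exponential law into $C^\infty(E,E)$ entirely) closes the gap.
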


\begin{rem}\label{rem: algsame}
 Note that the proof of Proposition \ref{prop: iso:VB} did neither use the topological structure of $\Gamma_c(E)$ nor any topological information on the algebra $C^\infty_c(M)$. Hence we could without any changes replace $\Gamma_c(E)$ with the module $\Gamma (E)$. In particular, this implies $\SL(\Gamma_c(E))\cong \Aut (E) \cong \SL(\Gamma(E))$. 
\end{rem}

From the purely algebraic point of view (disregarding continuity and smoothness) Remark \ref{rem: algsame} shows that it is irrelevant whether we consider the semi-linear representation introduced in the next section on $\Gamma (E)$ or $\Gamma_c(E)$.

\section{Linking representations of Lie groupoids and infinite-dimensional Lie groups}\label{sect:reps}
In this section we will show that for a Lie groupoid $\cG$, a representation $\cG \rightarrow \Phi(E)$, induces semi-linear representation of the Lie groups $\Bis (\cG)$ and $\SG(\alpha)$. Conversely, we will prove that under suitable assumptions on $\cG$, certain semi-linear representations of $\Bis (\cG)$ and $\SG(\alpha)$ are induced by representations of the underlying Lie groupoid.

\subsection*{Representations of the bisection group}
\addcontentsline{toc}{subsection}{Representations of the bisection group}

In this section we will discuss the relation between representations of the Lie groupoid $\cG$ and of the Lie group $\Bis (\cG)$.
Our approach is inspired by the approach in \cite[Section 5]{MR1958838} and \cite[Section 3.5]{MR2844451}. Note however, that in the first case, the bisections were considered without topological or smooth structure and in the second source these questions were treated only in a topological framework (but for fields of Hilbert spaces).\footnote{In fact \cite{MR2844451} deals with topological groupoids and representations on continuous fields of Hilbert spaces. In addition $\Bis (\cG)$ (understood as the group of continuous sections) is then treated as a topological group.}

\begin{prop}\label{prop: construct1}
Let $\cG$ be a locally trivial Lie groupoid on $M$, $(E, \pi, M)$ be a vector bundle and $\RepG \colon \cG \rightarrow \Phi (E)$ be a representation of $\cG$. Then $\RepG$ induces semi-linear representation $\rhoG \colon \Bis (\cG) \rightarrow \SL\big(\Gamma(E)\big)$ of $\Bis (\cG)$ given by the formula
\begin{equation}\label{indrep: BIS}
\rhoG (\sigma) (\xi)(m) \coloneq \RepG (\sigma (m))\Big( \xi(\alpha \circ \sigma (m))\Big) , m \in M, \xi\in \Gamma_c(E).
\end{equation}
Moreover, the representation restricts to a semi-linear representation on $\Gamma_c(E)$ which we also denote by $\rhoG \colon \Bis (\cG) \rightarrow \SL\big(\Gamma_c(E)\big)$
\end{prop}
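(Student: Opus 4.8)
The plan is to verify three things in turn: that the formula \eqref{indrep: BIS} really defines an $\R$-linear automorphism of $\Gamma(E)$, that this automorphism is semi-linear in the sense of Definition \ref{semi}, and that $\sigma \mapsto \rhoG(\sigma)$ is a group homomorphism into $\SL(\Gamma(E))$; then the restriction statement follows by a support computation. For the first point I would observe that the right-hand side of \eqref{indrep: BIS} is, pointwise in $m$, the composition of the linear isomorphism $\RepG(\sigma(m)) \colon E_{\alpha(\sigma(m))} \to E_{\beta(\sigma(m))} = E_m$ (using $\RepG$ is base preserving and $\beta\circ\sigma = \id_M$) with evaluation of $\xi$ at $\alpha\circ\sigma(m)$. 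Smoothness of $m \mapsto \rhoG(\sigma)(\xi)(m)$ is clear from smoothness of $\RepG$, of $\sigma$, of $\alpha$, and of $\xi$, together with the smoothness of the linear action $\ActG$ from Remark \ref{rem: linRep}: indeed $\rhoG(\sigma)(\xi)(m) = \ActG\big(\sigma(m), \xi(\alpha\circ\sigma(m))\big)$, so it is a composite of smooth maps. That $\rhoG(\sigma)$ is bijective with inverse $\rhoG(\sigma^{-1})$ falls out of the homomorphism property below.

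For semi-linearity, the associated diffeomorphism should be $\mu_\sigma \coloneq (\alpha\circ\sigma)^{-1} \in \Diff(M)$. Computing, for $f \in C^\infty(M)$ and $\xi \in \Gamma_c(E)$,
\[
\rhoG(\sigma)(f\xi)(m) = \RepG(\sigma(m))\big(f(\alpha\circ\sigma(m))\,\xi(\alpha\circ\sigma(m))\big) = f(\alpha\circ\sigma(m))\cdot \rhoG(\sigma)(\xi)(m),
\]
using that $\RepG(\sigma(m))$ is linear, so $\rhoG(\sigma)(f\xi) = (f\circ \alpha\circ\sigma)\cdot \rhoG(\sigma)(\xi) = (\mu_\sigma^{-1}{}^*f)\cdot\rhoG(\sigma)(\xi)$, which is exactly the semi-linearity identity with $\mu^M = \mu_\sigma.(\cdot)$ the algebra automorphism associated to $\mu_\sigma$ as in Remark \ref{rem: assoc:diffeo}. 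For the homomorphism property I would insert \eqref{eq: bis mult} and use that $\RepG$ is a morphism of Lie groupoids, so $\RepG\big(\sigma_1(m)\cdot\sigma_2(\alpha\circ\sigma_1(m))\big) = \RepG(\sigma_1(m))\circ\RepG(\sigma_2(\alpha\circ\sigma_1(m)))$; chasing the base points (the source of $\sigma_1(m)$ is $\alpha\circ\sigma_1(m)$, which is where one evaluates the $\sigma_2$-factor) one reads off $\rhoG(\sigma_1\star\sigma_2) = \rhoG(\sigma_1)\circ\rhoG(\sigma_2)$. Alternatively, and more cleanly, I would identify $\RepG$ with the induced bundle-morphism picture via Example \ref{ex: iso:sl}/Proposition \ref{prop: iso:VB}: composing $\RepG_* \colon \Bis(\cG) \to \Bis(\Phi(E))$ (postcomposition with $\RepG$, a group homomorphism since $\RepG$ is a groupoid morphism) with the isomorphism $\gamma \colon \Bis(\Phi(E)) \to \SL(\Gamma_c(E))$ of \eqref{BIS:SL} yields precisely the formula \eqref{indrep: BIS}, and both maps being group homomorphisms gives the result for free.

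Finally, for the restriction to $\Gamma_c(E)$: if $\xi$ is supported in a compact $K$, then $\rhoG(\sigma)(\xi)(m) = 0$ whenever $\alpha\circ\sigma(m) \notin K$, i.e.\ $\rhoG(\sigma)(\xi)$ is supported in $(\alpha\circ\sigma)^{-1}(K) = \mu_\sigma(K)$, which is compact since $\alpha\circ\sigma$ is a diffeomorphism; hence $\rhoG(\sigma)$ maps $\Gamma_c(E)$ into itself, and the same for $\sigma^{-1}$, so it restricts to an element of $\SL(\Gamma_c(E))$. I do not expect a genuine obstacle here; the only mildly delicate point is the bookkeeping of base points in the homomorphism identity (making sure the argument of $\RepG(\sigma_2(\cdot))$ is fed the correct point $\alpha\circ\sigma_1(m)$ rather than $m$), and routing the verification through $\gamma\circ\RepG_*$ as above sidesteps even that by reducing everything to the already-established algebra of Proposition \ref{prop: iso:VB}.
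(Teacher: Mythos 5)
Your proposal is correct and follows essentially the same route as the paper's proof: checking base points via $\RepG(\sigma(m))\colon E_{\alpha(\sigma(m))}\to E_m$, smoothness via the linear action $\ActG$, the semi-linearity identity $\rhoG(\sigma)(f\xi)=(f\circ\alpha\circ\sigma)\cdot\rhoG(\sigma)(\xi)$, and the support estimate $(\alpha\circ\sigma)^{-1}(K)$ for the restriction to $\Gamma_c(E)$. Your alternative of factoring the construction as $\gamma\circ\RepG_*$ through Proposition \ref{prop: iso:VB} is a clean way to get the homomorphism property (which the paper leaves as "straightforward to check"), but it is an optional shortcut rather than a genuinely different argument.
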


\begin{proof}
Let us first prove that $\rhoG (\sigma) \colon \Gamma(E) \rightarrow \Gamma(E)$ (or $\rhoG (\sigma) \colon \Gamma_c(E) \rightarrow \Gamma_c(E)$) makes sense.

\paragraph{Step 1:} \emph{$\rhoG(\sigma)(\xi)$ is a (compactly supported) section if $\xi$ is a (compactly supported) section.} Note that by \eqref{indrep: BIS}, we have
 $$\pi \Big((\rhoG (\sigma) (\xi))(m)\Big) = \pi\Big( (\RepG(\sigma (m))(\xi(\alpha \circ \sigma (m))))\Big) = \beta (\sigma (m)) = m,$$
 whence the image is again a section.

Let now $\xi\in\Gamma_c(E)$, $K=\text{supp} \xi$ and $\sigma\in\Bis(\cG)$. Then $\rhoG (\sigma)(\xi)(m) = \RepG(\sigma (m))\Big( \xi(\alpha \circ \sigma (m))\Big)$ for every $m\in M$. Since $\RepG(\sigma(m))$ is an isomorphism, $\rhoG (\sigma)(\xi)(m)\notin \im 0_M$ if and only if $\xi(\alpha \circ \sigma (m))\neq 0$. Therefore $\alpha \circ \sigma (m)\in K$ and $m\in(\alpha\circ\sigma)^{-1}(K)$.
Hence
$$\{m\in M\colon \RepG(\sigma)(\xi)(m)\notin \im 0_M\}\subseteq (\alpha\circ\sigma)^{-1}(K),$$ which is compact since $\alpha\circ\sigma\in \Diff(M)$. Consequently $\rhoG (\sigma)\xi$ is compactly supported.

\paragraph{Step 2:} \emph{$\rhoG (\sigma) (\xi)$ is smooth, whence $\rhoG(\sigma) \colon \Gamma(E) \rightarrow \Gamma(E)$ makes sense.}
 Since $\RepG$ is a representation of the Lie groupoid $\cG$ on $E$, \cite[Proposition 1.7.2]{Mackenzie05} implies that $\ActG \colon G \times_\alpha E\rightarrow E, (g, x)\mapsto \RepG (g)(x)$ is smooth, where the pullback manifold is given as $G\times_\alpha E\coloneq \{(g,x) \colon x\in E_{\alpha(g)}\}$.
 We conclude that $m\mapsto \RepG (\sigma (m))\Big( \xi(\alpha \circ \sigma (m))\Big)$ is smooth as a composition of the smooth maps $M\rightarrow G\times_\alpha E, m\mapsto\Big(\sigma(m),\xi\big(\alpha\circ\sigma(m)\big)\Big)$ and $\ActG \colon G\times_\alpha E \rightarrow E$ and therefore it is smooth and $\rhoG  (\sigma) (\xi)\in\Gamma(E)$ or $\rhoG  (\sigma) (\xi)\in\Gamma_c(E)$ if $\xi \in \Gamma_c(E)$.

 \paragraph{Step 3:} \emph{$\rhoG$ is a semi-linear representation of $\Bis (\cG)$ on $\Gamma (E)$ whence also on $\Gamma_c(E)$.}
 It is straightforward to check that $\rhoG (\sigma_1\star\sigma_2)=\rhoG (\sigma_1)\circ \rhoG (\sigma_2)$ for $\sigma_1, \sigma_2\in \Bis(\cG)$. As $\rhoG  (\sigma)$ is clearly linear, $\rhoG $ defines a group morphism from $\Bis (\cG)$ to $\GL (\Gamma (E))$ and thus $\rhoG$ is a representation of $\Bis (\cG)$ on $\Gamma (E)$.
 A direct computation shows that $\rhoG(\sigma)(fX) = f\circ (\alpha \circ \sigma) . \rhoG(\sigma)(X)$ holds for all $f\in C^\infty(M)$ and $X \in \Gamma(E)$. Thus $\rhoG(\sigma) \in \SL(\Gamma(E)) \cong \SL (\Gamma_c(E))$ with associated diffeomorphism $\alpha \circ \sigma$.
\end{proof}

The representations $\rho_\phi$ we have constructed for $\Bis (\cG)$ are local in the following sense:

\begin{rem}\label{locality}
Let $\phi \colon \cG \rightarrow \Phi (E)$ be a representation of a Lie groupoid $\cG = (G\toto M)$, $\M \in \{\Gamma(E),\Gamma_c(E)\}$ and $\rho_\phi \colon \Bis (\cG) \rightarrow \SL(\M)$ be the semi-linear representation from Proposition \ref{prop: construct1}.
A trivial computation using \eqref{indrep: BIS} shows that $\rho_\phi$ satisfies the following \emph{locallity condition}:
$$\sigma\in \Bis(\cG), \sigma(m)=m \in M\quad \text{then}\quad \big(\rhoG (\sigma)\xi)(m)=\xi(m)\quad\text{for all}\quad\xi\in\M.$$
\end{rem}

Locality can conveniently be phrased in terms of the natural $\SG(\alpha)$-action on $G$.

\begin{setup}[Natural $\SG (\alpha)$ action]\label{setup:sgalph:act}
For $\cG = (G\toto M)$, the group $\SG (\alpha)$ acts on $G$ via
\begin{align*}
\gamma_{\SG} \colon \SG (\alpha) \times G &\rightarrow G, (f,x) \mapsto f.x \coloneq x\cdot f(x).
\end{align*}
Using the group morphism $\Psi$ (Remark \ref{rem: op:sub}), we obtain an action\footnote{Note that this action is not the natural action $\Bis (\cG)$, i.e.\ $\Bis (\cG) \times M \rightarrow M , (\sigma,m) \mapsto \alpha \circ \sigma (m)$} of $\Bis (\cG)$ on $G$:
\begin{displaymath}
\gamma_{\Bis} \colon \Bis (\cG) \times G \rightarrow G, (\sigma, x) \mapsto \Psi (\sigma).x= x\cdot \sigma(\alpha (x))
\end{displaymath}
Finally, we remark that $\gamma_{\SG}$ and $\gamma_{\Bis}$ are smooth, whence Lie group actions.
\end{setup}

Now the $\SG(\alpha)$-action allows us to rephrase the concept of a local action discussed in Remark \ref{locality}:

\begin{defn}\label{locality1}
Let $\cG = (G\toto M)$ be a Lie groupoid and $(E,\pi,M)$ be vector bundle and $\M \in \{\Gamma(E),\Gamma_c(E)\}$.
We say that a representation $\rho \colon \Bis (\cG) \rightarrow \GL (\M)$ is \emph{local} if
$$\gamma_{\Bis} (\sigma , m) = m \text{ implies }\rho(\sigma)\xi (m) = \xi (m) \text{ for all }\xi \in \M.$$
Similarly, for $\mathbb{L}\in \{\Gamma(\alpha^*E),\Gamma_c(\alpha^*E)\}$ a representation $\hat{\rho} \colon \SG (\alpha) \rightarrow \GL (\mathbb{L})$ is local if
$$\gamma_{\SG} (f , g) = g \text{ implies }\hat{\rho}(f)\xi (g) = \xi (g) \text{ for all }\xi \in \mathbb{L}.$$
\end{defn}

\begin{cor}
Let $\cG$ be a Lie groupoid on $M$, $(E, \pi, M)$ be a vector bundle and $\RepG \colon \cG \rightarrow \Phi (E)$ be a representation of $\cG$. Then the induced semi-linear representation $\rho_\phi$ is local.
\end{cor}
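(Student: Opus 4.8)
The plan is to reduce this corollary directly to the locality statement already recorded in Remark \ref{locality} by unwinding the definition of the action $\gamma_{\Bis}$. Recall from Setup \ref{setup:sgalph:act} that $\gamma_{\Bis}(\sigma, x) = x \cdot \sigma(\alpha(x))$ for $x \in G$, so for a base point $m \in M$ (identified with the unit arrow $1_m \in G$) we have $\gamma_{\Bis}(\sigma, m) = 1_m \cdot \sigma(\alpha(1_m)) = 1_m \cdot \sigma(m) = \sigma(m)$, using $\alpha(1_m) = m$ and $1_m \cdot \sigma(m) = \sigma(m)$ since $\beta(\sigma(m)) = m$ makes the product defined and $1_m$ is a left unit. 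Hence the hypothesis $\gamma_{\Bis}(\sigma, m) = m$ of Definition \ref{locality1} is equivalent to the condition $\sigma(m) = 1_m = m$ appearing in Remark \ref{locality}.

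First I would state this equivalence explicitly, noting that it is immediate from the groupoid axioms and the identification $M \cong 1(M) \subseteq G$. Then I would invoke Proposition \ref{prop: construct1}, which produces the semi-linear representation $\rho_\phi$ on $\M \in \{\Gamma(E), \Gamma_c(E)\}$, and Remark \ref{locality}, which asserts precisely that $\sigma(m) = m$ implies $(\rho_\phi(\sigma)\xi)(m) = \xi(m)$ for all $\xi \in \M$. Combining the two gives exactly the defining condition of locality in Definition \ref{locality1}. The verification in Remark \ref{locality} itself is the one-line computation $(\rho_\phi(\sigma)\xi)(m) = \phi(\sigma(m))(\xi(\alpha \circ \sigma(m))) = \phi(1_m)(\xi(m)) = \mathrm{id}_{E_m}(\xi(m)) = \xi(m)$, using that $\phi$ is base-preserving so $\phi(1_m) = 1_{\phi\text{-image}} = \mathrm{id}_{E_m}$, and that $\alpha(1_m) = m$.

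There is essentially no obstacle here: the corollary is a translation of Remark \ref{locality} into the language of the $\SG(\alpha)$-action, and the only content is the observation that the two formulations of "fixes $m$" coincide. The one point that deserves a word of care is the identification $M \cong 1(M)$ together with the convention (stated in the Conventions setup) that the unit map is suppressed, so that writing $\gamma_{\Bis}(\sigma, m) = m$ for $m \in M$ really does mean $1_m \cdot \sigma(m) = 1_m$ inside $G$; once this is spelled out, the equivalence with $\sigma(m) = 1_m$ is forced. I would therefore present the proof as a short paragraph rather than a displayed computation.

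\begin{proof}
Let $m \in M$ and recall the identification $M \cong 1(M) \subseteq G$. By the definition of $\gamma_{\Bis}$ in \ref{setup:sgalph:act} we have $\gamma_{\Bis}(\sigma, m) = 1_m \cdot \sigma(\alpha(1_m)) = 1_m \cdot \sigma(m) = \sigma(m)$, where the product is defined because $\beta(\sigma(m)) = m = \alpha(1_m)$ and equals $\sigma(m)$ since $1_m$ is a left unit. Hence $\gamma_{\Bis}(\sigma, m) = m$ holds if and only if $\sigma(m) = 1_m$, i.e.\ precisely the hypothesis of the locality condition in Remark \ref{locality}. By that remark, $\sigma(m) = 1_m$ implies $(\rho_\phi(\sigma)\xi)(m) = \xi(m)$ for all $\xi \in \M$. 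Comparing with Definition \ref{locality1}, this is exactly the statement that $\rho_\phi$ is local.
\end{proof}
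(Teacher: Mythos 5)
Your proof is correct and follows the same route the paper intends: the corollary is stated without proof precisely because it amounts to the observation that $\gamma_{\Bis}(\sigma,m)=1_m\cdot\sigma(m)=\sigma(m)$, so the hypothesis of Definition \ref{locality1} coincides with the condition $\sigma(m)=1_m$ in Remark \ref{locality}, whose one-line computation (using that the groupoid morphism $\phi$ sends units to units) gives the conclusion. Your explicit unwinding of the identification $M\cong 1(M)\subseteq G$ is exactly the intended argument.
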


Having established the algebraic correspondence between representations of a Lie groupoid and its bisection group we turn now to the smoothness of the induced representation.

\begin{prop}\label{prop:smooth}
Let $\RepG$ be a representation of the Lie groupoid $\cG$ on the vector bundle $E$, then the map $\rhoG \colon \Bis (\cG) \rightarrow \SL (\Gamma_c (E))$ from Proposition \ref{prop: construct1} is joint smooth, i.e.\ the associated map $\rho_\phi^\wedge \colon \Bis (\cG) \times \Gamma_c(E)\rightarrow \Gamma_c (E), (\sigma,X) \mapsto \rho_\phi (\sigma).X$ is smooth.
\end{prop}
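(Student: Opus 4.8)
The goal is to show that the map $\rho_\phi^\wedge\colon \Bis(\cG)\times\Gamma_c(E)\to\Gamma_c(E)$, $(\sigma,X)\mapsto \rho_\phi(\sigma).X$ defined by $(\sigma,X)\mapsto \big(m\mapsto \Phi(\sigma(m),X(\alpha\circ\sigma(m)))\big)$ is smooth in the sense of Bastiani calculus, where $\Gamma_c(E)$ carries the fine very strong topology. The natural strategy is to factor $\rho_\phi^\wedge$ through a chain of maps each of which is already known to be smooth from the calculus of manifolds of mappings (as assembled in \cite{michor1980} and the Stacey–Roberts machinery in \cite{AS17}), together with the smoothness of the linear action $\Phi\colon G\times_\alpha E\to E$ guaranteed by \cite[Proposition 1.7.2]{Mackenzie05}. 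Concretely, I would write $\rho_\phi^\wedge$ as a composite of the following operations: (i) the map $\Bis(\cG)\to \Diff(M)$, $\sigma\mapsto \alpha\circ\sigma$ (smooth, being the restriction of the smooth pushforward $\alpha_*$ on $\CfS(M,G)$, cf.\ \cite[Corollary 10.14]{michor1980}); (ii) the pullback/precomposition map $\Diff(M)\times\Gamma_c(E)\to\Gamma_c(E)$, $(\mu,X)\mapsto X\circ\mu$, which is smooth because composition $\mathrm{Comp}\colon C^\infty(M,E)\times\Prop(M,M)\to C^\infty(M,E)$ is smooth (\cite[Theorem 11.4]{michor1980}) and diffeomorphisms are proper — one must check it restricts well to compactly supported sections, which holds since $\mu$ proper sends compact support to compact support; (iii) the evaluation-type assembly $(\sigma,Y)\mapsto \big(m\mapsto(\sigma(m),Y(m))\big)$ into sections of the pullback bundle $\alpha^*E = G\times_\alpha E$ pulled back along $\sigma$, and finally (iv) postcomposition with the smooth bundle map $\Phi\colon G\times_\alpha E\to E$, which is smooth by the Stacey–Roberts/pushforward results since $\Phi$ is a smooth map of manifolds (here one uses that pushforward by a smooth map is smooth on $\CfS$-manifolds, \cite[Corollary 10.14]{michor1980} or the fibrewise-linear version).

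\textbf{Organising the composite.} A clean way to organise (iii)+(iv) is to observe that for fixed $\sigma$ the map $m\mapsto \big(\sigma(m), X(\alpha\sigma(m))\big)$ is a section over $M$ of the pullback bundle $\sigma^*(\alpha^*E)\to M$, and that $\rho_\phi(\sigma).X$ is obtained by applying $\Phi$ fibrewise. To make the joint dependence on $\sigma$ transparent I would instead pull everything back to a fixed space: consider the smooth map $\Bis(\cG)\times C^\infty(M,G)\ni(\sigma,\text{--})$ and use that the graph map $\sigma\mapsto(\id_M,\sigma)\colon \Bis(\cG)\to C^\infty(M,M\times G)$ is smooth, then compose with pushforward by the smooth fibre map $M\times G\to\cdots$. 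The technically cleanest route, though, is: let $\Psi\colon \Bis(\cG)\times\Gamma_c(E)\to C^\infty_{\mathrm{fS}}(M,E)$ be defined by sending $(\sigma,X)$ to $m\mapsto \Phi\big(\sigma(m),X(\alpha\sigma(m))\big)$; show $\Psi$ is smooth by factoring through $C^\infty_{\mathrm{fS}}(M, G\times_\alpha E)$ via $(\sigma,X)\mapsto\big(m\mapsto(\sigma(m),(X\circ\alpha_*\sigma)(m))\big)$ followed by $\Phi_*$; then observe that $\Psi$ actually lands in $\Gamma_c(E)\subseteq C^\infty_{\mathrm{fS}}(M,E)$ by Step 1 of Proposition \ref{prop: construct1}, and invoke that $\Gamma_c(E)$ is a (split) submanifold of $C^\infty_{\mathrm{fS}}(M,E)$ so that corestriction preserves smoothness. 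The map $(\sigma,X)\mapsto\big(m\mapsto(\sigma(m),(X\circ\alpha_*\sigma)(m))\big)$ is smooth as the "fibre product" of the smooth maps $\sigma\mapsto\sigma$ (into $C^\infty(M,G)$) and $(\sigma,X)\mapsto X\circ(\alpha\circ\sigma)$ (smooth by (i)+(ii)), using that the diagonal-type pairing $C^\infty_{\mathrm{fS}}(M,G)\times\Gamma_c(E)\to C^\infty_{\mathrm{fS}}(M,G\times_\alpha E)$ is smooth — this is where one needs the fibre-product manifold structure on $G\times_\alpha E$ and the fact that the induced map on mapping spaces is smooth, which follows because it is (locally) built from the smooth structure maps of the mapping-space functor.

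\textbf{The main obstacle.} The genuinely delicate point is the interaction with the \emph{fine very strong topology} on $\Gamma_c(E)$ for non-compact $M$. All of the cited smoothness results for $\CfS$-manifolds of mappings are phrased for the full space $C^\infty_{\mathrm{fS}}(M,N)$; I must be careful that (a) the pushforward $\Phi_*$ by the bundle map $\Phi$, and (b) the composition map $(\mu,X)\mapsto X\circ\mu$ for $\mu\in\Diff(M)$, both behave well with respect to \emph{compact supports} and the Whitney-type topology — i.e.\ that these restrict to smooth maps between the submanifolds $\Gamma_c(\cdot)$ of the mapping spaces, not merely the ambient spaces. For composition this is the reason one needs properness of $\mu$ (a diffeomorphism): $(\alpha\circ\sigma)^{-1}(K)$ is compact when $K$ is, exactly as in Step 1 of Proposition \ref{prop: construct1}, so support is controlled locally uniformly in $\sigma$, which is what smoothness of $\mathrm{Comp}\colon C^\infty(M,E)\times\Prop(M,M)\to C^\infty(M,E)$ in \cite[Theorem 11.4]{michor1980} encodes. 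For the pushforward by $\Phi$ one uses that $\Phi$ restricts to the zero section appropriately (so $0$-sections go to $0$-sections) and is fibrewise linear, so support is preserved on the nose. I would therefore expect the write-up to spend most of its effort verifying that each building block descends to the compactly-supported submanifold, after which smoothness of the composite $\rho_\phi^\wedge$ — and hence joint smoothness of the representation $\rho_\phi$ — follows formally from the chain rule for Bastiani-smooth maps.
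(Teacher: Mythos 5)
Your proposal is correct and follows essentially the same route as the paper: the paper likewise writes $\rho_\phi(\sigma)(\xi)=\ActG_*\circ(\sigma,\operatorname{Comp}(\xi,\alpha_*(\sigma)))$ and invokes smoothness of pushforward \cite[Corollary 10.14]{michor1980}, of $\operatorname{Comp}$ on $C^\infty(M,E)\times\operatorname{Prop}(M,M)$ \cite[Theorem 11.4]{michor1980}, the split/closed submanifold structure of $C^\infty(M,G\times_\alpha E)$ and of $\Bis(\cG)$, and finally corestriction into the closed submanifold $\Gamma_c(E)\subseteq\CfS(M,E)$. The only cosmetic difference is that you spend more effort worrying about compact supports along the way, whereas the paper works in the ambient mapping space throughout and handles supports once, at the final corestriction step.
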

\begin{proof}
Let us first rewrite the formula for the representation with the help of the linear action map (cf.\ Remark \ref{rem: linRep}):
\begin{align}
\rhoG (\sigma)(\xi) = \ActG \circ (\sigma , \xi \circ \alpha \circ \sigma) = \ActG_* \circ (\sigma , \text{Comp} (\xi , \alpha_* (\sigma))).\label{formula1}\end{align}
Here we used $\ActG_* \colon C^\infty (M,G\times_\alpha E) \rightarrow C^\infty (M,E), f \mapsto \ActG \circ f$, $\alpha_* \colon \Bis (\cG) \rightarrow C^\infty (M,M)$ and $\text{Comp} \colon C^\infty (M,E) \times \text{Prop} (M,M), (g,h) \mapsto g\circ h$ where $\text{Prop}(M,M)$ denotes the set of proper mappings in $C^\infty (M,M)$. Note that the signature of $\text{Comp}$ makes sense as $\alpha_* (\Bis (\cG)) \subseteq \Diff (M) \subseteq \text{Prop} (M,M)$.

We claim that all the mappings in \eqref{formula1} are smooth as mappings of $(\sigma,\xi)$ and thus $\rhoG$ is smooth.
To see this recall that for a smooth mapping $f \colon K \rightarrow L$ between manifolds, the map $f_* \colon \CfS (M,K) \rightarrow \CfS (M,L)$ is smooth \cite[Corollary 10.14]{michor1980}.
Now $G\times_\alpha E \subseteq G \times E$ is a split submanifold, whence $C^\infty (M, G\times_\alpha E) \subseteq C^\infty (M, G\times E)$ is a closed submanifold (combine \cite[Lemma 1.13]{Glofun} and \cite[Proposition 10.8]{michor1980}). In addition $\Bis (\cG)$ is a submanifold of $\CfS (M,G)$ and we can thus conclude that $\alpha_*$ and $\ActG_*$ in \eqref{formula1} are smooth (using again \cite[Proposition 1.7.2]{Mackenzie05} for the smoothness of $\ActG$). Further $\text{Comp}$ is smooth by \cite[Theorem 11.4]{michor1980}. This already proves that $\rhoG$ is smooth as a mapping into $\CfS (M, E)$

To conclude the proof we note that $\Gamma_c (E)$ is a locally convex space and a closed submanifold of $\CfS (M,E)$ (it is the preimage $\pi_*^{-1}(\id_M)$ of the smooth mapping $\pi_*$, compare \cite[Corollary 10.14]{michor1980}). As $\rhoG$ takes its image in the closed submanifold, $\rhoG$ is indeed smooth as a mapping into $\Gamma_c (E)$.
\end{proof}

Proposition \ref{prop:smooth} has only be established for compactly supported sections and we will now point out the reasons for not establishing it for the space of all sections.

\begin{rem}\label{rem:whynot?}
To establish continuity and joint smoothness of $\rho_\phi \colon \Bis (\cG) \rightarrow \SL (\Gamma_c (E))$, we exploited that $\text{Comp}\colon \Gamma_c (E) \times \text{Prop} (M,M) \rightarrow \CfS (M,E)$ is smooth. This does not carry over to the $\Gamma (E)$ case as $\Gamma(E) \subseteq C^\infty (M,E)$ carries the compact open $C^\infty$-topology which does not turn $C^\infty (M,E)$ into a manifold (cf.\ \cite[p.\ 429]{conv1997}). Hence it makes no sense to discuss smoothness of the analogous joint composition map. 
Though this does not rule out that the induced representations could be (joint) smooth on $\Gamma(E)$, the proof strategy of Proposition \ref{prop:smooth} fails. As smoothness of (joint) composition maps is a powerful tool (whose proof is quite involved, cf.\ \cite[Theorem 11.4]{michor1980}) we expect that an analogue of Proposition \ref{prop:smooth} for $\rho_\phi \colon \Bis (\cG) \rightarrow \SL (\Gamma(E))$ would require a serious technical effort using specialised arguments. 

We wish to stress that this problem arises only if the source manifold $M$ is non-compact. For compact $M$ Proposition \ref{prop:smooth} trivially holds for $\Gamma(E)$ as it coincides with $\Gamma_c (E)$. Moreover, it stands to reason that for non-compact $M$ the spaces $\Gamma_c (E)$ with the fine very strong topology might actually be the more relevant space in infinite-dimensional geometry as they are the model spaces of manifolds of mappings.    
\end{rem}

As we have only established (joint) smoothness of the representations $\rho_\phi$ on spaces of compactly supported sections we will from now on restrict our attention to this case. The idea is to construct from  a smooth representation on compactly supported sections a (smooth) representation of the underlying Lie groupoid. 
To this end, we need to consider Lie groupoids for which the groups $\Bis (\cG)$ and $\SG (\alpha)$ encode much of the relevant information on the underlying groupoid (see e.g.\ \cite{MR3573833,MR3569066} and \cite{AS17} for further information on this topic). 

\begin{defn}
A Lie groupoid $\cG = (G \toto M)$ is said to
 have \emph{enough bisections}, if for every $g \in G$ there is $\sigma_g \in \Bis (\cG)$ with $\sigma_g (\beta (g)) = g$.
\end{defn}

We will now establish a converse to Proposition \ref{prop:smooth} and construct representations of the underlying Lie groupoid if it has enough bisections.

\begin{prop}\label{prop:construction3}
Let $\cG =(G\toto M)$ be a Lie groupoid which has enough bisection, $(E, \pi, M)$ is a smooth vector bundle and $\rho \colon \Bis(\cG) \rightarrow \SL\big(\Gamma_c(E)\big)$ is a joint smooth local representation, then $\rho=\rhoG $ for a smooth representation $\phi \colon \cG\rightarrow\Phi(E)$ (cf.\ Proposition \ref{prop: construct1}).
\end{prop}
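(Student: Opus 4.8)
The plan is to construct a representation $\phi\colon\cG\to\Phi(E)$ out of $\rho$ and then verify $\rho_\phi=\rho$. Since $\cG$ has enough bisections, every $g\in G$ admits some $\sigma\in\Bis(\cG)$ with $\sigma(\beta(g))=g$; guided by \eqref{indrep: BIS} I would set, for $v\in E_{\alpha(g)}$,
\[
  \phi(g)(v)\coloneq\big(\rho(\sigma).X\big)(\beta(g))\in E_{\beta(g)},\qquad X\in\Gamma_c(E),\ X(\alpha(g))=v .
\]
Everything then hinges on controlling, for the semi-linear automorphism $\rho(\sigma)$, its associated diffeomorphism $\mu^{\rho(\sigma)}_M\in\Diff(M)$ and algebra isomorphism $\mu^M_{\rho(\sigma)}$ of $C^\infty_c(M)$ (Remark \ref{rem: assoc:diffeo}, Lemma \ref{lem: bunaut:SL}).

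\emph{Key step, and the main obstacle.} I would first prove that $\mu^{\rho(\sigma)}_M=(\alpha\circ\sigma)^{-1}$ for every $\sigma$; equivalently, $(\rho(\sigma).\xi)(m)$ depends on $\xi$ only through $\xi(\alpha\circ\sigma(m))$. Locality gives this at unit points: if $\tau(m)=1_m$, then evaluating $\rho(\tau).(f\xi)=\mu^M_{\rho(\tau)}(f)\cdot(\rho(\tau).\xi)$ at $m$ and using $(\rho(\tau).\xi)(m)=\xi(m)$ forces $\mu^{\rho(\tau)}_M(m)=m$. For a general $\sigma$ and a point $m_0$, put $n_0\coloneq(\alpha\circ\sigma)^{-1}(m_0)$ and $g\coloneq\sigma(n_0)$; enough bisections supplies $\tau$ with $\tau(m_0)=g^{-1}$, and one checks that $\tau\star\sigma$ is a unit at $m_0$ while $\sigma\star\tau$ is a unit at $n_0$. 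Feeding this into the homomorphism identity $\mu^{\rho(\sigma_1\star\sigma_2)}_M=\mu^{\rho(\sigma_1)}_M\circ\mu^{\rho(\sigma_2)}_M$ and comparing with the homomorphism $\sigma\mapsto(\alpha\circ\sigma)^{-1}$ should pin down $\mu^{\rho(\sigma)}_M$. I expect this identification to be the crux: locality by itself only controls $\mu^{\rho(\sigma)}_M$ at unit points, and propagating to all of $M$ needs both the bisections furnished by the hypothesis and, in order to handle the non-identity components of $\Bis(\cG)$, the continuity of $\sigma\mapsto\mu^{\rho(\sigma)}_M$ that follows from joint smoothness of $\rho$ (the scalar $\mu^M_{\rho(\sigma)}(f)(m)$ relating $(\rho^\wedge(\sigma,f\xi))(m)$ to $(\rho^\wedge(\sigma,\xi))(m)$ depends smoothly on $\sigma$). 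Granting the key step, $\phi(g)(v)\in E_{\beta(g)}$; it depends on $v$ alone, because $\ker\ev_{\alpha(g)}=I_{\alpha(g)}\Gamma_c(E)$ (cf.\ Lemma \ref{lem: bunaut:SL}); it depends on $\sigma$ only through its value $\sigma(\beta(g))=g$, since two such bisections differ by one that is a unit at $\beta(g)$; and $\phi(g)\colon E_{\alpha(g)}\to E_{\beta(g)}$ is linear and bijective, being the relevant restriction of the bundle automorphism $\phi_{\rho(\sigma)}$ of Lemma \ref{lem: bunaut:SL}.

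It then remains to verify that $\phi$ is a morphism of Lie groupoids. Functoriality is algebraic: for composable $g_1,g_2$ and $\sigma_i$ through $g_i$, the product $\sigma_1\star\sigma_2$ is a bisection through $g_1g_2$, so $\phi(g_1g_2)=\phi(g_1)\circ\phi(g_2)$ because $\rho$ is a group homomorphism, while $\phi(1_m)=\id_{E_m}$ follows from $\rho(\one)=\id$. Smoothness of $\phi$ is the last analytic point: over a trivialising patch one charts $G$ near a point $g$ by a smooth family of bisections $(\lambda,x)\mapsto\sigma_\lambda(x)$ (again using enough bisections) and writes the composite $(\lambda,x)\mapsto\phi(\sigma_\lambda(x))$ through the joint-smooth map $\rho^\wedge\colon\Bis(\cG)\times\Gamma_c(E)\to\Gamma_c(E)$ of Proposition \ref{prop:smooth}, evaluation of sections, and a local frame, all of which are smooth; equivalently, the associated linear action $G\times_\alpha E\to E$ of Remark \ref{rem: linRep} is smooth. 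Finally, unwinding the definition of $\phi$ yields $\phi(\sigma(m))\big(\xi(\alpha\circ\sigma(m))\big)=(\rho(\sigma).\xi)(m)$ for all $\sigma,m,\xi$ — the left-hand side being well-posed precisely by the key step — which is formula \eqref{indrep: BIS} defining $\rho_\phi$; hence $\rho_\phi=\rho$, and together with the faithfulness of $\phi\mapsto\rho_\phi$ from Theorem B the two constructions are mutually inverse on the relevant class of representations.
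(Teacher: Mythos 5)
Your construction of $\phi$, the well-definedness checks, the functoriality computation and the smoothness argument via smooth local sections of the evaluation submersions all match the paper's proof (the smoothness part is exactly the paper's Lemma \ref{lem: sm:back}). The genuine problem is the step you yourself flag as the crux, and your argument for it does not close. Reducing via $\mu^{\rho(\tau\star\sigma)}_M(m_0)=m_0$ only yields $\mu^{\rho(\sigma)}_M(m_0)=\big(\mu^{\rho(\tau)}_M\big)^{-1}(m_0)$: it trades the unknown value of the diffeomorphism associated to $\sigma$ at $m_0$ for the equally unknown value of the one associated to the auxiliary bisection $\tau$ at the same point, and locality constrains $\mu^{\rho(\tau)}_M$ only at points where $\tau$ is a unit, which $m_0$ need not be. The appeal to continuity does not rescue this, since the two homomorphisms $\sigma\mapsto\mu^{\rho(\sigma)}_M$ and $\sigma\mapsto(\alpha\circ\sigma)^{-1}$ are never shown to agree on any set large enough to propagate from. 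In fact no argument from the stated hypotheses can establish the key step: for the pair groupoid $\mathcal{P}(M)$ over a connected $M$ (which has enough bisections, with $\Bis(\mathcal{P}(M))\cong\Diff(M)$) and the trivial line bundle, the trivial representation $\rho(\sigma)=\id_{C^\infty_c(M)}$ is joint smooth, semi-linear (with associated algebra automorphism $\mu^M=\id$) and local in the sense of Definition \ref{locality1}, yet its associated diffeomorphisms are all $\id_M\neq(\alpha\circ\sigma)^{-1}$ and it is not of the form $\rhoG$ for any groupoid representation $\phi$.

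To be fair, this is not a defect of your proposal alone: the paper's own proof disposes of the key step by asserting that $\rho(\sigma)_M.(\alpha(g))=\beta(g)$ ``due to Step 1 of Proposition \ref{prop: iso:VB}'', but that step only identifies $\SL(\Gamma_c(E))$ with $\Bis(\Phi(E))$ and says nothing about how the frame-groupoid bisection attached to $\rho(\sigma)$ relates to the bisection $\sigma$ of $\cG$. What is really needed is the additional hypothesis, implicit in \cite{MR1958838,MR2844451} and in the formula displayed before Theorem B of the introduction, that the semi-linearity is with respect to the natural action, i.e.\ $\rho(\sigma)(f\xi)=(f\circ\alpha\circ\sigma)\cdot\rho(\sigma)(\xi)$ for all $\sigma$, $f$, $\xi$. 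Granting that, your key step is immediate, $(\rho(\sigma)\xi)(\beta(g))$ depends only on $\xi(\alpha(g))$ by Lemma \ref{lem: bunaut:SL}, and the remainder of your argument coincides with the paper's. You have correctly located the one point where the proof is not self-contained; your proposed repair, however, is circular and should be replaced by strengthening the hypothesis.
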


\begin{proof} For convenience set $\M=\Gamma_c(E)$ and consider $\sigma\in \Bis(\cG)$. Then $\rho(\sigma)\colon \M\rightarrow \M$ is a semi-linear isomorphism and due to Proposition \ref{prop: iso:VB} we have the following data associated to $\rho(\sigma)$:
\begin{itemize}
 \item a diffeomorphism $\rho(\sigma)_M$ (cf.\ Remark \ref{rem: assoc:diffeo}), induced by a bisection of the frame groupoid, i.e.\ for every $g\in G$ we have $\rho(\sigma)_M.(\alpha(g))=\beta(g)$ (due to Step 1 of  Proposition \ref{prop: iso:VB}),
\item a bundle automorphism $F_{\rho(\sigma)}\colon E \rightarrow E$ which covers $\rho(\sigma)_M$.
\end{itemize}
We can thus define a representation of the groupoid $\cG$ on $E$ as follows.
Since $\cG$ has enough bisection, for each $g\in G$ choose a bisection $\sigma_g\in \Bis(\cG)$ with $\sigma_g\big(\beta(g)\big)=g$. Also \cite[Lemma 11.8]{MR1930277} implies that for $h\in E$ with $h\in\pi^{-1}(\alpha(g))$ there exists a section $\xi_h\in \M$ with $\xi_h(\alpha(g))=h$. We now define the action
\begin{equation}\label{indrep gpd:Bis}
\phi\colon G\rightarrow \Phi(E), \quad \phi(g)h\coloneq \sigma_g.h\coloneq (\rho(\sigma_g).\xi_h)\big(\rho (\sigma_g)_M.(\alpha(g))\big) = F_{\rho(\sigma_g)}(h)
\end{equation}
and note that by  definition $\phi$ does not depend on the choice of $\xi_h$ (by the computation in Step 3 of Proposition \ref{prop: iso:VB}).

We have to prove that the definition of $\phi(g)$ is independent of the choice of $\sigma_g\in \Bis(\cG)$.  To this end consider, $\sigma_g, \sigma_g'\in \Bis(\cG)$ with $\sigma_g(\beta(g))=\sigma_g'(\beta(g))=g$,  whence $(\sigma_g\star\sigma_g'^{-1})(\beta(g))=\beta(g)$ (remember the identification $M\subseteq G$).
Since $\rho$ is a local representation (see the Remark \ref{locality}), we conclude that for every  $\xi\in\M$
$$\Big(\rho\big(\sigma_g\star(\sigma_g'^{-1})\big)\xi-\xi\Big)(\beta(g))=0,$$ Replacing $\xi$ by
$\eta \coloneq (\rho(\sigma_g')\xi)$ we obtain $(\rho(\sigma_g)\xi)(\beta(g))=(\rho(\sigma_g')\xi)(\beta(g))$, whence $\phi(g)h$ is independent of the choice of $\sigma_g$.

Since $\rho$ is a representation and the identifications in Proposition \ref{prop: iso:VB} are by group isomorphisms, $\phi$ is a groupoid homomorphism. In more detail, let $(g_1, g_2)\in G^2$ and $\sigma_i\in \Bis(\cG)$ such that $\sigma_i(\beta(g_i))=g_i$ for $i\in \{1,2\}$. Then it is easy to check that $(\sigma_1\star\sigma_2)(\beta(g_1g_2))=g_1g_2$ and since $\beta(g_1g_2)=\beta(g_1)$ and $\alpha(g_1g_2)=\alpha(g_2)$, we see for every $h\in E_{\alpha(g_2)}$ and $\xi\in\M$ with $\xi(\alpha(g_2))=h$ that
\begin{align*}
\phi(g_1g_2)h &=\big(\rho(\sigma_1\star\sigma_2)\xi\big)(\beta(g_1)) =\big(\rho(\sigma_1)\rho(\sigma_2)\xi\big)(\beta(g_1))\\
&=\big(\phi(g_1)\big(\phi(g_2)h\big).
\end{align*}
The last equality follows from the fact that $\big(\rho(\sigma_2)\xi\big)(\alpha(g_1))=\big(\rho(\sigma_2)\xi\big)(\beta(g_2))=\phi(g_2)h$ and $\sigma_1(\beta(g_1))=g_1$. Similar computations yield the compatibility with units and inversion, whence $\phi \colon \cG \rightarrow \Phi (E)$ is a morphism of groupoids. Thus we obtain a representation of $\cG$ if $\phi$ is smooth. We postpone the proof of smoothness to Lemma \ref{lem: sm:back} below.

Finally  we  check that $\rho=\rhoG $.  To do this, let $\sigma\in \Bis(\cG)$ and $\xi\in \M$ be arbitrary. By Proposition
\ref{prop: construct1} we have $\rho_\phi (\sigma)(\xi)(m) = \phi(\sigma (m))(\xi(\alpha \circ \sigma (m))$. Now the definition of $\phi$ (with $g=\sigma (m)$ and $h=\xi(\alpha\circ \sigma(m))$, i.e.\ we can choose $\sigma_g \coloneq g$ and $\xi_h\coloneq \xi$) yields the formula
$$
(\rhoG (\sigma)\xi)(m)=\phi\big(\sigma(m)\big)\Big(\xi(\alpha\circ\sigma(m))\Big)=\rho(\sigma).\xi ((\rho_\phi)_M.(\alpha\circ \sigma (m)) = \rho(\sigma).\xi (m),$$
where the last equality follows from the above by $(\rho_\phi)_M (\beta (\sigma (m)) = \alpha (\sigma (m))$ (and we note that $(\rho_\phi)_M.x= (\rho_\phi)_M^{-1}(x)$ for all $x\in M$. We conclude that $\rho=\rhoG$.
 \end{proof}

\begin{lem}\label{lem: sm:back}
Let $\cG =(G\toto M)$ be a Lie groupoid which has enough bisection, $(E, \pi, M)$ a smooth vector bundle and $\rho \colon \Bis(\cG) \rightarrow \SL\big(\Gamma_c(E)\big)$ a joint smooth representation which is local as in Remark \ref{locality}, then we obtain a smooth map
$$\phi\colon \cG\rightarrow \Phi(E), \quad \phi(g)h\coloneq (\rho(\sigma_g)\xi_h)(\beta(g)).$$
\end{lem}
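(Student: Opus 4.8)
The plan is to establish smoothness of $\phi$ locally, exploiting that smoothness of a groupoid representation $\phi \colon \cG \to \Phi(E)$ is equivalent (by \cite[Proposition 1.7.2]{Mackenzie05}, as recalled in Remark \ref{rem: linRep}) to smoothness of the associated linear action $\Phi \colon G \times_\alpha E \to E$. So it suffices to show that $(g,h) \mapsto F_{\rho(\sigma_g)}(h)$ is smooth near each point $(g_0, h_0) \in G \times_\alpha E$, where the choices $\sigma_g$ are a priori made pointwise and non-smoothly. The key observation is that, by the independence-of-choice established in Proposition \ref{prop:construction3}, we are free to replace the pointwise family $(\sigma_g)_g$ locally by a \emph{single} bisection: since $\cG$ has enough bisections, pick $\sigma_0 \in \Bis(\cG)$ with $\sigma_0(\beta(g_0)) = g_0$. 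Then for $g$ in a neighbourhood of $g_0$ we may \emph{not} in general use $\sigma_0$ itself (as $\sigma_0(\beta(g))$ need not equal $g$), but we can correct it: the plan is to produce a smooth family of bisections $g \mapsto \tau_g$ with $\tau_g(\beta(g)) = g$, depending smoothly on $g$ in a suitable sense, and then use $\sigma_g \coloneq \tau_g$ in \eqref{indrep gpd:Bis}.

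First I would construct such a local smooth family of bisections. Using a local bisection $b$ through $g_0$ (which exists since $\beta$ is a submersion and $\cG$ has enough bisections, so one can even take the restriction of a global bisection to a small open set) together with a slice chart, one builds for each $g$ near $g_0$ a bisection $\tau_g$ with $\tau_g(\beta(g)) = g$ by multiplying $\sigma_0$ with a bisection supported near $\beta(g_0)$ that carries $1_{\beta(g)}$ to $g \cdot \sigma_0(\beta(g))^{-1}$; the latter can be arranged to depend smoothly on $g$ via a bump-function interpolation in a chart. The upshot is a smooth map $V \to \Bis(\cG)$, $g \mapsto \tau_g$, on a neighbourhood $V$ of $g_0$ (smoothness into $\Bis(\cG)$ meaning the adjoint map $V \times M \to G$ is smooth, which by Proposition \ref{prop: BIS:LGP} and the exponential law for manifolds of mappings amounts to a smooth map $V \to \Bis(\cG)$).

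Then I would write, for $(g,h) \in V \times_\alpha E$ and any section $\xi$ with $\xi(\alpha(g)) = h$,
\begin{equation*}
\phi(g)h = (\rho(\tau_g).\xi)(\beta(g)) = \rho^\wedge(\tau_g, \xi)(\beta(g)) = \ev \circ \big(\rho^\wedge \times \beta\big)\big((\tau_g,\xi), g\big),
\end{equation*}
where $\rho^\wedge \colon \Bis(\cG) \times \Gamma_c(E) \to \Gamma_c(E)$ is the joint smooth action map from the hypothesis and $\ev \colon \Gamma_c(E) \times M \to E$ is the (smooth) evaluation map on the manifold of mappings $\CfS(M,E) \supseteq \Gamma_c(E)$. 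To make this genuinely a composition of smooth maps in $(g,h)$, the remaining point is to choose $\xi$ smoothly depending on $h$: near $(g_0,h_0)$ one fixes finitely many sections $\xi_1,\dots,\xi_k \in \Gamma_c(E)$ that frame $E$ over a neighbourhood $U$ of $\alpha(g_0)$, and sets $\xi = \sum_i c_i(g,h)\xi_i$ where $(c_i(g,h))$ are the (smooth) fibre coordinates of $h$ with respect to this frame; the resulting map $(g,h) \mapsto \xi$ into $\Gamma_c(E)$ is smooth (it is affine-linear in the smoothly-varying coefficients $c_i$). Composing the smooth maps $(g,h) \mapsto (\tau_g, \sum_i c_i(g,h)\xi_i) \mapsto \rho^\wedge(\cdots) \mapsto \ev(\cdots, g)$ shows $\phi$ is smooth near $(g_0,h_0)$, and since the point was arbitrary, $\phi$ is smooth.

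The main obstacle I anticipate is the construction of the smooth local family $g \mapsto \tau_g$ of bisections with $\tau_g(\beta(g)) = g$: one must verify that the interpolation/bump-function construction really yields a map whose adjoint $V \times M \to G$ is smooth, and that $\tau_g$ genuinely lies in $\Bis(\cG)$ (i.e.\ that $\alpha \circ \tau_g$ stays a diffeomorphism), which requires shrinking $V$. Everything else — the equivalence of smoothness of $\phi$ with smoothness of $\Phi$, smoothness of $\ev$ on $\CfS(M,E)$, and the smooth choice of $\xi$ from $h$ via a local frame — is standard once Proposition \ref{prop: BIS:LGP} and the cited manifold-of-mappings results are in hand. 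An alternative that sidesteps the family-of-bisections issue would be to work directly with the single bisection $\sigma_0$ on the open set $\{g : \beta(g) \in W\}$ where $W$ is a $\beta(g_0)$-neighbourhood on which $\sigma_0$ is "adapted", by instead expressing $\phi(g)h$ through $F_{\rho(\sigma_0)}$ composed with the groupoid translation by $\sigma_0(\beta(g))^{-1}g$ — but this reintroduces the need to know $\phi$ is multiplicative, which we already have, so either route closes the argument.
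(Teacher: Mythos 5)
Your proposal is correct and follows the same skeleton as the paper's proof: localize around $(g_0,h_0)$, reduce smoothness of $\phi$ to smoothness of the associated map $(g,h)\mapsto \phi(g).h$ via trivializations/the exponential law, replace the pointwise choices $\sigma_g$ and $\xi_h$ by locally smooth families (legitimate by the independence-of-choice already established), and conclude by composing the joint smooth action $\rho^\wedge$ with the smooth evaluation $\Gamma_c(E)\times M\to E$. Where you genuinely diverge is in how the smooth families are produced. You build $g\mapsto\tau_g$ with $\tau_g(\beta(g))=g$ by hand, correcting a fixed bisection $\sigma_0$ by a bump-function bisection through an arrow near the unit $1_{\beta(g_0)}$, and you choose $\xi_h$ via a local frame; the paper instead gets both families at once from Corollary \ref{cor: sub:fin}: since $\cG$ has enough bisections, $\ev\colon\Bis(\cG)\times M\to G$ is a surjective submersion, hence admits local smooth sections, and any such section is automatically of the form $(S_1,\beta|_V)$ because $\beta(\sigma(m))=m$ for every bisection; the same argument applied to $\ev_\Gamma\colon\Gamma_c(E)\times M\to E$ replaces the local-frame step. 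Your route can be completed, but it carries all the technical risk you yourself flag: one must verify that the corrected maps stay in $\Bis(\cG)$ (in particular that $\alpha\circ\tau_g$ remains in the open set $\Diff(M)$), that the adjoint map $V\times M\to G$ is smooth, and note that the correction arrow should be $g\cdot\sigma_0\big((\alpha\circ\sigma_0)^{-1}(\alpha(g))\big)^{-1}$ rather than $g\cdot\sigma_0(\beta(g))^{-1}$, which is not composable in general. The submersion argument sidesteps all of this and is precisely what Lemma \ref{lem: sect:ev} and Corollary \ref{cor: sub:fin} in the appendix are set up to deliver.
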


\begin{proof}
Clearly it suffices to test smoothness (and continuity) of $\phi$ locally around a given $g \in G$.
Fixing $g$, we use the local triviality of the bundle $E$ to obtain a pair of bundle trivialisations $\kappa_i \colon \pi^{-1}(U_i) \rightarrow U_i \times F$ $i \in \{1,2\}$ such that $\alpha(\phi (g)) \in U_1$ and $\beta(\phi(g))\in U_2$. Since $\phi$ is a morphism of groupoids, the definition of the frame groupoid shows that the open $g$-neighbourhood $V \coloneq \alpha^{-1} (U_1) \cap \beta^{-1} U_2) \subseteq G$ satisfies $\phi(V) \subseteq \text{im}\, \alpha_{\Phi(E)}^{-1} (U_1) \cap \beta_{\Phi(E)}^{-1} (U_2)$.
Thus it suffices to establish smoothness of $$\tilde{\phi} \colon V \rightarrow U_2 \times \GL (F) \times U_1, \quad g \mapsto (\beta(g), \kappa_2(\beta(g), \cdot) \circ \phi(g) \circ \kappa_1(\alpha(g),\cdot), \alpha(g)).$$
Recall that $\GL(F)$ is an open subset of the finite-dimensional vector space $L(F,F)$. Since on $L(F,F)$ the operator norm topology coincides with the compact open topology (induced by the canonical inclusion into $C(F,F)$), we deduce from the exponential law \cite[Theorem A]{alas2012} that a mapping $f \colon G \rightarrow \GL(F)$ is smooth if and only if the associated map $f^\wedge \colon G \times F \rightarrow F, f^\wedge (g,h) \coloneq f(g)(h)$ is smooth.
Since the bundle trivialisations are smooth, the map $\tilde{\phi}$ (and thus $\phi$) will be smooth if $V \times E|_{U_1} \rightarrow E|_{U_2}, (g,v) \mapsto \phi (g).v$ is smooth.
Recall from \eqref{indrep gpd:Bis} the formula
\begin{align*}
 \phi(g).v = \rho(\sigma_g).\xi_h (\rho(\sigma_g)_M.\alpha (g)) = \rho(\sigma_g).\xi_h (\beta (g)) = \ev_\Gamma (\rho(\sigma_g).\xi_h,\beta(g)),
\end{align*}
where $\sigma_g \in \Bis (\cG)$ with $\sigma_g (\beta(g))=g$, $\xi_h (\alpha (g)) = h$ and $\ev_\Gamma \colon \Gamma_c (E) \times M \rightarrow E$, denotes the evaluation.
Now $\rho \colon \Bis (\cG) \times \Gamma_c (E) \rightarrow \Gamma_c (E)$ and $\ev_\Gamma$ are smooth.
Thus $\phi(g).v$ will be smooth in $(g,v)$ if we can show that $\sigma_g$ and $\xi_h$ depend smoothly on $(g,h)$.
Combining Corollary \ref{cor: sub:fin} with the fact that $\cG$ has enough bisections, $\ev \colon \Bis (\cG) \times M \rightarrow G,\ (\sigma ,m)\mapsto \sigma (m)$ is a smooth surjective submersion.
The definition of a bisection entails that by shrinking $V$, we may assume (cf.\ \cite[Lemma 1.7]{Glofun}) that there is a smooth section of $\ev$ of the form $(S_1,\beta|_V) \colon V \rightarrow \Bis (\cG) \times M$, i.e.\ $\ev (S_1(x),\beta(x)) =x$.
Similarly Corollary \ref{cor: sub:fin} shows that $\ev_\Gamma$ is a surjective submersion, whence there is an open $h$-neighbourhood $W_h \subseteq M$ together with a smooth section $(S_2,\pi|_{W_h}) \colon W_h \rightarrow \Gamma_c (E) \times M$ of $\ev_\Gamma$.
Since the definition of $\phi$ does neither depend on the choice of $\sigma_g$ nor on the choice of $\xi_h$, we can plug in the sections to see that
\begin{align*}
\phi(g).h = \ev_\Gamma (\rho(S_1(g)).S_2 (h),\beta(g)), \quad (g,h) \in V\times W_h
\end{align*}
 is smooth in $(g,h)$. Covering $V\times E|_{U_1}$ with open sets of the form $V\times W_h$, we deduce that $(g,h) \mapsto \phi(g).h$ is smooth.
\end{proof}

\begin{rem}
Comparing our results in this section to \cite{MR2844451} who works with continuous representations of topological groupoids, one sees that in the differentiable category, there is no need for the additional assumption of the groupoid to be "bisectional"\footnote{\cite[p.25]{MR2844451} In the context of the cited paper this puts an additional condition on the topology of the space of continuous mappings $C (M,G)$ which seems to be hard to check in practice.} made in loc.cit.
We remark that the additional ingredient allowing us to avoid this additional assumption was the existence of smooth sections for the evaluation maps.
As long as one considers semi-linear representations of bisections on spaces of smooth bundle sections, this argument carries over without any change to continuous representations.
Thus our results can be seen as an answer to the question in \cite{MR2844451} whether  the additional assumption can be avoided for Lie groupoids.

Further, we remark that our construction explains how to characterise the smoothness of Lie groupoid representations in terms of the smoothness of representations of its bisection group (which was an open question in \cite{MR1958838}).
\end{rem}

\begin{ex}
Returning to Example \ref{ex: gpd:rep}, let us interpret the representations constructed in this section for the examples:
\begin{enumerate}
\item For the Lie groupoid $G \toto \{*\}$, we have $\Bis (\cG) = G$, whence the construction is just the identity (mapping representations of Lie groups to themselves)
\item We have seen in Example \ref{ex: gpd:rep} (b) that the pair groupoid admits only representation on trivial bundles. It is well known (e.g.\ \cite{MR3351079}) that the bisections of the pair groupoid can be identified as $\Bis (\mathcal{P} (M)) \cong \Diff (M)$. Thus if $M$ admits enough bisections (e.g.\ if $M$ is connected, cf.\ \cite{ZHUO,MR3573833}), all joint smooth semi-linear local representations of $\Diff (M)$ on spaces of compactly supported sections correspond to representations of the pair groupoid, i.e.\ such a representation can only exist on the sections of a trivial bundle.
 \item In Example \ref{ex: gpd:rep} (c), if $M$ is the circle $\mathbb{S}^1$ and $G$ is a Lie group, then $\Bis(\cG)$, the group of bisections of the Lie groupoid $\cG=(G\times M\rightrightarrows M)$ is the loop group $C^{\infty}(\mathbb{S}^1, G)$ with its usual topology cf.\ \cite{MR2844451}. Recall that for $G$ a compact semisimple Lie group and $X$ a Riemannian manifold, the representation theory of the group $C^{\infty}(X,G)$ is studied in \cite{GGV77}.
\end{enumerate}
\end{ex}

As a final example, we discuss adjoint representations of a class of Lie groupoids which arises naturally in the context to partial differential equation, Lie pseudogroups and differential Galois theory (cf.\ e.g.\ \cite{Lorenz09,Yudi16,MR0211421}).

\begin{setup}[The jet groupoids and its bisections]\label{setup: jetgpd}
 Let $\cG = (G\toto M)$ be a Lie groupoid and denote by $\Bis_{\text{loc}}(\cG)$ the pseudogroup of local bisections (i.e.\ local sections $\sigma$ of $\beta$ such that $\alpha\circ \sigma$ is a diffeomorphism onto its image). We define the \emph{$k$th jet groupoid} of $\cG$ as the Lie groupoid $J^k(\cG) \coloneq (J^k (\cG) \toto M)$ given by
 $$J^k (\cG) := \{j_x^k \sigma \quad \text{ is the }k\text{th jet expansion\footnotemark  of } \sigma \in \Bis_{\text{loc}}(\cG) \text{ at } x \in \text{dom } \sigma\},$$\footnotetext{i.e.\ the map $m\mapsto j^k_m\sigma$ assigning to each $m\in U$ the equivalence class of the $k$-Taylor expansion of $s$, see \cite[Section 1]{MR702720} for details.}
 with source and target maps $\alpha_{j^k}(j^k_x \sigma) = \alpha \circ \sigma (x)$ and $\beta_{j^k}(j^k_x\sigma) = \beta \circ \sigma (x) = x$. The pair $\big(j^k_x \sigma, j^k_y \tau\big)$ is composable if $\alpha \circ \sigma (x)= y$ and the product is $j^k_x \sigma .j^k_y \tau\coloneq j^k_x (\sigma \star \tau)$ (cf.\ Definition \ref{defn: bis}). This makes sense, since one computes
\begin{align*}
 \alpha_{j^k}\left(j^k_x(\sigma\star\tau)\right)&=\alpha \circ (\sigma\star\tau) (x)
 =\alpha \circ (\sigma \cdot \tau \circ \alpha\circ\sigma)(x)
 \\
 &=(\alpha\circ\tau)\circ (\alpha\circ\sigma)(x)
 =(\alpha\circ\tau)(y) =\alpha_{j^k}(j^k_y\tau),
\end{align*}
   and $\beta_{j^k}\big(j^k_y (\sigma\star\tau)\big)=y=\beta_{j^k}(j^k_y \sigma)$.
 The smooth structure which turns $J^k(\cG)$ into a manifold (and the $k$th jet groupoid into a Lie groupoid) is inherited from the jet bundle \cite[Theorem 1.10]{MR702720}.
The projections $j_m^k \sigma \mapsto j_m^{k-1} \sigma$ induce a sequence of Lie groupoids (cf.\ \cite{Yudi16})
$$\cdots \rightarrow J^k(\cG) \rightarrow J^{k-1}(\cG) \rightarrow \cdots \rightarrow J^0(\cG) \cong \cG.$$
The bisection group $\Bis (J^k (\cG))$ contains the group $\Bis (\cG)$ as a subgroup. Namely, a direct computation shows that
$$h^k \colon \Bis (\cG) \rightarrow \Bis (J^k(\cG)), \quad \sigma \mapsto j^k \sigma, \quad \text{where } j^k\sigma (x) \coloneq j^k_x\sigma,$$
is a group morphism. Since $J^k (\cG)$ is a submanifold of the $k$th jet bundle and $\Bis (\cG)$ is a submanifold of $C^\infty (M,G)$, \cite[Proposition 11.1]{michor1980} implies that $h_k$ is smooth, whence a Lie group morphism.
A bisection in the image $H^k \coloneq h^k (\Bis (\cG)) \subseteq \Bis (J^k (\cG))$ is called \emph{holonomic bisection}, \cite[Section 3.1]{MR3318255} and \cite[Section 1.3.2 and Example 3.1.12]{Salazar13}. Holonomic sections are studied in connection with so called Pfaffian groupoids arising in the study of partial differential equations. By \cite[Lemma 1.3.4]{Salazar13} the Cartan form $\kappa$ on $J^k(\cG)$ detects holonomic bisections in the sense that a bisection $\Sigma$ is holonomic if and only if the pullback of $\kappa$ vanishes $\Sigma^*\kappa =0$.
\end{setup}

In the next example we will discuss the adjoint representation of the jet groupoid and the induced representation on the bisection group of $\cG$.

\begin{ex}[Adjoint representation of the jet groupoid]\label{ex: repjt}
We denote by $\mathcal{A}^k \coloneq \mathcal{A} (J^k(\cG))$ the Lie algebroid associated to the kth jet groupoid (cf.\ \cite{Mackenzie05}). Then $\mathcal{A}^k$ is called the \emph{$k$th jet algebroid}. We note that $\mathcal{A}^0 = \mathcal{A} (J^0(\cG)) = \mathcal{A} (\cG)$ is the Lie algebroid associated to $\cG$. Further, \cite[Section 1.8]{Yudi16} establishes an isomorphism of Lie algebroids $\mathcal{A}^k \cong (J^k \mathcal{A}(\cG) \rightarrow M))$, where $(J^k \mathcal{A}(\cG) \rightarrow M)$ is the $k$th jet bundle of the vector bundle $\mathcal{A}(\cG) \rightarrow M$ (cf.\ \cite[Section 1]{michor1980}).

There is a natural linear action of the jet groupoid $J^k(\cG)$ on $\mathcal{A}^{k-1}$. Let $x\in M, a_x\in (\mathcal{A}^{k-1})_x$ and choose a path $t \mapsto j^{k-1}_x\tau_t \in \alpha_{j^{k-1}}^{-1}(x)$ with $\left.\frac{d}{dt}\right|_{t=0}j^{k-1}_x\tau_t=a_x$. We set $\gamma_t \coloneq \beta_{j^k}(j^{k-1}_x\tau_t)$ to obtain a fibre-wise linear map $\mathcal{A}^{k-1}_{\alpha(\sigma(x))} \rightarrow \mathcal{A}^{k-1}_x$ defined via
\begin{equation}\label{eq: linact}
j^k_x\sigma.a_x \coloneq \left.\frac{d}{dt}\right|_{t=0}\big(j^{k-1}_{\gamma_t}\sigma\big)^{-1} \cdot \big(j^{k-1}_x\tau_t\big) \cdot \big( j^{k-1}_x\sigma\big).
\end{equation}
One can prove, \cite[Chapter 2.3]{Yudi16} that \eqref{eq: linact} yields a groupoid morphism
$$\mathrm{Ad} \colon J^k(\cG)\rightarrow \Phi(A^{k-1}), \quad \mathrm{Ad}(j^k_x\sigma)(a_x)=j^k_x\sigma.a_x,$$
called (for obvious reasons) the \emph{adjoint representation} of the $k$th jet groupoid on the $(k-1)$st jet bundle. Following Proposition \ref{prop: construct1} we obtain an induced semi-linear representation of $\Bis (J^k(\cG))$ on the sections of $\mathcal{A}^{k-1}$ and using the Lie group morphism $h^k \colon \Bis (\cG) \rightarrow \Bis (J^k (\cG), \sigma \mapsto j^k\sigma$ we obtain an induced representation of the holonomic sections
     $\rho_{\mathrm{Ad}, \text{hol}} \colon \Bis(\cG) \rightarrow \Gamma_c(\mathcal{A}^{k-1})$ defined via
    \begin{align*}
   \rho_{\mathrm{Ad},\text{hol}}(\sigma)(\xi)(m)& \coloneq  \rho_{\mathrm{Ad}}(h^k(\sigma))(\xi)(m) = \mathrm{Ad} (j^k \sigma(m))\Big(\xi (\alpha_{j^k}\circ j^k\sigma (m))\Big)\\
     &=\mathrm{Ad}(j^k_m\sigma)\Big(\xi(\alpha\circ \sigma(m)) )\Big), \quad \xi \in \Gamma_c(\mathcal{A}^{k-1}), m\in M
     \end{align*}
 We specialise now to $k=1$ and recall that $\mathcal{A}^0 = \mathcal{A} (\cG)$. Now $\Bis (\cG)$ acts on $G$ by conjugation via $C_\sigma (g) := \sigma(\beta(g))^{-1}\cdot g \cdot \sigma (\alpha (g))$.
 This allows one to identify \eqref{eq: linact} for $k=1$ with the derivative of the conjugation action, i.e.\ we obtain for $k=1$ the formula
 \begin{equation}\label{eq: adjoint}
 \rho_{\mathrm{Ad}, \text{hol}} (\sigma)(\xi)(m) = T_{\alpha \circ \sigma (m)} C_{\sigma} (1_{\alpha \circ \sigma (m)}) (\xi (\alpha \circ \sigma (m)), \ \text{for } m\in M, \xi \in \Gamma_c(\mathcal{A}(\cG)),
 \end{equation}
 (cf.\ \cite[Section 1.3.2]{Salazar13}). One can identify $\Gamma_c (\mathcal{A}(\cG))$ as the Lie algebra of $\Bis (\cG)$ (see e.g.\ \cite{MR3351079}, the proof for non-compact $M$ is similar). Using this identification, one computes that the adjoint action of $\Bis (\cG)$ on its Lie algebra coincides with \eqref{eq: adjoint}. In conclusion, the adjoint action of $J^1(\cG)$ induces on the holonomic bisections, aka.\ $\Bis (\cG)$, the adjoint action of the Lie group of bisections.
\end{ex}

\subsection*{\texorpdfstring{Representations of $\SG (\alpha)$}{Representations of the group of self-mappings}}
\addcontentsline{toc}{subsection}{Representations of the group of self-mappings}
In this section we shift our focus from the group of bisections to the group $\SG(\alpha)$.
Similar to the last section we will establish a correspondence between smooth representations of the groupoid $\cG$ and the joint smooth (semi-linear and local) representations of the group $\SG(\alpha)$.
Again every smooth representation $\RepG \colon \cG \rightarrow \Phi(E)$ induces a semi-linear and local joint smooth representation $$\rho_{\phi,S}\colon \SG(\alpha)\rightarrow \SL(\Gamma_c(\alpha^*E)).$$
We will further investigate the restriction of $\rho_{\phi,S}$ to the subgroup $\alpha_*(\Bis(G))=\{\sigma\circ\alpha\colon \sigma\in \Bis(G)\ \}$. If $\cG$ is $\alpha$-proper, the restriction induces a semi-linear, local and joint smooth representation $\Bis(\cG)\rightarrow \SL(\Gamma_c(E))$ which we denote by $\rho_{\phi,B}$.
Finally, we discuss how one can associate to a semi-linear, local and joint smooth representation of $\SG(\alpha)$ a corresponding representations of the Lie groupoid $\cG$.

\begin{prop}\label{prop: construct2}
Let $\RepG \colon G \rightarrow \Phi(E)$ be a representation of the Lie groupoid $\cG = (G\toto M)$ on the vector bundle $(E,\pi, M)$.
Then $\RepG$ induces a joint smooth semi-linear and local representation
$\rho_{\phi,S} \colon \SG(\alpha) \rightarrow \SL (\Gamma_c(\alpha^*E))$ via
\begin{equation}\label{eq: const:SGREP}
\rho_{\phi,S} (f) (\xi)(x) \coloneq \RepG\big(f(x)\big)\xi\big(R_f(x)\big),\ \ f\in \SG(\alpha),\  \xi\in \Gamma_c(\alpha^*E), \text{ and } x\in G,\end{equation} where $R_f(x)=xf(x)$.
\end{prop}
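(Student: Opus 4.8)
The plan is to mirror, step by step, the treatment of the bisection case in Proposition \ref{prop: construct1} and Proposition \ref{prop:smooth}, replacing $\Bis(\cG)$ by $\SG(\alpha)$, the map $\alpha\circ\sigma$ by $R_f$, and the bundle $E$ by $\alpha^*E$. First I would check that \eqref{eq: const:SGREP} is well defined, i.e.\ that $\rho_{\phi,S}(f)(\xi)$ is again a (compactly supported) section of $\alpha^*E$. For a point $x\in G$ the fibre of $\alpha^*E$ over $x$ is $E_{\alpha(x)}$; since $\beta\circ f=\alpha$, the element $f(x)$ is an arrow from $\alpha(f(x))$ to $\beta(f(x))=\alpha(x)$, and $R_f(x)=x\cdot f(x)$ satisfies $\alpha(R_f(x))=\alpha(f(x))$, so $\xi(R_f(x))\in E_{\alpha(f(x))}$ and $\RepG(f(x))$ maps this isomorphically onto $E_{\alpha(x)}$, the correct fibre. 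Smoothness of $x\mapsto \RepG(f(x))\xi(R_f(x))$ follows by writing it as the composition of $x\mapsto (f(x),\xi(R_f(x)))$ into the pullback manifold $G\times_\alpha E$ with the smooth linear action $\ActG$ from Remark \ref{rem: linRep} (using $R_f\in\Diff(G)$, which holds since $f\in\SG(\alpha)$). Compact support is preserved because $R_f$ is a diffeomorphism, hence proper: $\mathrm{supp}\,\rho_{\phi,S}(f)(\xi)\subseteq R_f^{-1}(\mathrm{supp}\,\xi)$.

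Next I would verify the algebraic properties. Semi-linearity: for $g\in C^\infty_c(G)$ one computes $\rho_{\phi,S}(f)(g\cdot\xi)(x)=\RepG(f(x))\big(g(R_f(x))\xi(R_f(x))\big)=(g\circ R_f)(x)\cdot\rho_{\phi,S}(f)(\xi)(x)$, so $\rho_{\phi,S}(f)\in\SL(\Gamma_c(\alpha^*E))$ with associated diffeomorphism $R_f$ of $G$ (here one must be slightly careful that $\Gamma_c(\alpha^*E)$ is a module over $C^\infty_c(G)$, the algebra of the \emph{total space} $G$, not over $C^\infty_c(M)$; this is the natural setting since the base of $\alpha^*E$ is $G$). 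The homomorphism property $\rho_{\phi,S}(f\star g)=\rho_{\phi,S}(f)\circ\rho_{\phi,S}(g)$ is a direct computation using $(f\star g)(x)=f(x)\,g(R_f(x))$, the fact that $\RepG$ is a groupoid morphism (so $\RepG(f(x)\cdot g(R_f(x)))=\RepG(f(x))\circ\RepG(g(R_f(x)))$), and $R_{f\star g}=R_g\circ R_f$ (equivalently, that $R$ is an anti-homomorphism $\SG(\alpha)\to\Diff(G)$, recorded after Definition \ref{defn: SG}); one has to keep track of the order reversal and check it is consistent with $R$ being an anti-homomorphism. Locality is immediate from \eqref{eq: const:SGREP}: if $\gamma_{\SG}(f,g)=g$, i.e.\ $R_f(g)=g$, then $g\cdot f(g)=g$, so $f(g)=1_{\alpha(g)}$ is a unit arrow, hence $\RepG(f(g))=\mathrm{id}$ and $\rho_{\phi,S}(f)(\xi)(g)=\xi(g)$, which is exactly the condition in Definition \ref{locality1} for $\mathbb{L}=\Gamma_c(\alpha^*E)$.

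Finally, for joint smoothness I would reuse the argument of Proposition \ref{prop:smooth} verbatim, now over the base manifold $G$ instead of $M$. Rewrite $\rho_{\phi,S}(f)(\xi)=\ActG_*\circ\big(f,\mathrm{Comp}(\xi,R(f))\big)$ where $R\colon\SG(\alpha)\to\Diff(G)$ is smooth (a diffeomorphism onto a closed Lie subgroup, by \cite[Theorem B]{AS17}) and $\mathrm{Comp}\colon C^\infty(G,\alpha^*E)\times\Prop(G,G)\to C^\infty(G,\alpha^*E)$ is smooth by \cite[Theorem 11.4]{michor1980}, using $R(\SG(\alpha))\subseteq\Diff(G)\subseteq\Prop(G,G)$; postcomposition $\ActG_*$ is smooth by \cite[Corollary 10.14]{michor1980} together with \cite[Proposition 1.7.2]{Mackenzie05}, and $\SG(\alpha)$ is a submanifold of $\CfS(G,G)$ with $\alpha^*E\subseteq G\times E$ a split submanifold so that $C^\infty(G,\alpha^*E)$ is a closed submanifold of $C^\infty(G,G\times E)$. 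Since $\rho_{\phi,S}$ then takes values in the closed submanifold $\Gamma_c(\alpha^*E)=\pi_{\alpha^*E,*}^{-1}(\mathrm{id}_G)\subseteq\CfS(G,\alpha^*E)$, joint smoothness follows. The main obstacle I anticipate is purely bookkeeping: getting the composition order right in the homomorphism identity (because $R$ is an anti-homomorphism and multiplication in $\SG$ is "twisted"), and being careful that here the relevant function algebra is $C^\infty_c(G)$ and the relevant diffeomorphism group is $\Diff(G)$ rather than their counterparts on $M$; no genuinely new analytic input beyond what was already used for $\Bis(\cG)$ is needed.
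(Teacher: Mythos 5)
Your proposal is correct and follows essentially the same route as the paper's proof: the same fibre computation for well-definedness, the same $R_f^{-1}(\supp\xi)$ argument for compact support, the same factorisation through $\ActG$ for pointwise smoothness, the same use of $R_{f\star g}=R_g\circ R_f$ for the homomorphism property, the identification of the associated algebra automorphism of $C^\infty(G)$ as $h\mapsto h\circ R_f$ for semi-linearity (the paper records this in a separate remark), and the same rewriting $\rho_{\phi,S}(f)(\xi)=\ActG_*\circ(f,\mathrm{Comp}(\xi,R_f))$ via \cite[Theorem 11.4]{michor1980} for joint smoothness. Your explicit locality argument (that $R_f(g)=g$ forces $f(g)=1_{\alpha(g)}$) fills in a step the paper leaves as "a direct computation".
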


\begin{proof}
Let us first prove that $\rho_{\phi,S}$ yields linear mappings $\Gamma_c (\alpha^* E)) \rightarrow \Gamma_c (\alpha^* E))$.
Note that $\xi(R_f(x))=\xi(xf(x))\in E_{\alpha(xf(x))}=E_{\alpha(f(x))}$ and $\RepG\big(f(x)\big): E_{\alpha(f(x))}\rightarrow E_{\beta(f(x))}=E_{\alpha(x)}$.
Therefore $\big(\rho_{\phi,S} (f)\xi\big)(x)=\RepG\big(f(x)\big)\xi\big(xf(x)\big)\in E_{\alpha(x)}$, this means that $\rho_{\phi,S} (f)\xi$ is a section on the fibre bundle $(\alpha^*E, \alpha^*\pi, G)$.
Now if $\xi\in\Gamma_c (\alpha^*E)$ and $K\coloneq\text{supp}(\xi)$ then
$$\{x\in G \colon \rho_{\phi,S} (f) (\xi)(x)\notin  0_G\}\subseteq R_f^{-1}(K).$$
Since $R_f\in \Diff(G)$, $R_f^{-1}(K)$ is compact and thus $\rho_{\phi,S} (f) (\xi)\in \Gamma_c (\alpha^*E)$.
To establish smoothness of $\big(\rho_{\phi,S} (f)\xi\big)$ in $x$, observe that \eqref{eq: const:SGREP} arises as a composition of the following smooth mappings:
\begin{itemize}
\item $G\rightarrow G\times_{\alpha}E_{\pi}, \quad x\mapsto\big(f(x),\xi\circ R_f(x)\big)$ and
\item the linear action $\Phi \colon G\times_{\alpha}E_{\pi}\rightarrow E_{\pi}$ with $\Phi(g,x)=\RepG(g)x$.
\end{itemize}
 Thus $\big(\rho_{\phi,S} (f)\xi\big)\in\Gamma_c(\alpha^*E)$ for every $\xi\in\Gamma_c(\alpha^*E)$ and clearly $\rho_{\phi,S}(f)$ is linear in $\xi$.

It is easy to check that $\rho_{\phi,S} (\alpha).\xi=\xi$ for every $\xi\in\Gamma_c(\alpha^*E)$, i.e.\ $\rho_{\phi,S} (\alpha)$ is the identity on $\Gamma(\alpha^*E)$ (and we recall that the source map $\alpha$ is the identity of the Lie group $\SG(\alpha)$).
Let us now check that $\rho_{\phi,S} (f\star g) = \rho_{\phi,S} (f) \circ \rho_{\phi,S}(g)$ for arbitrary $f, g\in\SG(\alpha)$.
Consider $\xi\in\Gamma_c(\alpha^*E)$ and $x\in G$ and compute as follows
\begin{align*}
\Big(\rho_{\phi,S} (f\star g)\xi\Big)(x)&=\RepG\big((f\star g)(x)\big)\xi\big(R_{f\star g}(x)\big) =\RepG\Big(f(x)g\big(R_f(x)\big)\xi\Big(R_g\circ R_f (x)\Big)\\
&=\RepG(f(x))\RepG\big(g(R_f(x))\big)\xi\Big(R_g\big(R_f(x)\big)\Big)
=\RepG(f(x))\Big(\rho_{\phi,S} (g)\xi\Big)\big(R_f(x)\big)\\
&=\rho_{\phi,S} (f)\Big(\rho_{\phi,S} (g)\xi\Big)(x) =\Big(\rho_{\phi,S} (f)\circ\rho_{\phi,S} (g)\xi\Big) (x).
\end{align*}
Therefore $\rho_{\phi,S}$ defines a group homomorphism $\SG(\alpha) \rightarrow \GL (\Gamma_c (\alpha^* E))$ and we will check in Remark \ref{Semilinearity1} below that $\rho_{\phi,S}$ takes its image in $\SL(\Gamma_c (\alpha^* E))$. A direct computation involving \eqref{eq: const:SGREP} shows that $\rho_{\phi,S}$ is local in the sense of Definition \ref{locality1}.

Finally, we have to establish joint smoothness of the semi-linear representation $\rho_{\phi,S}$. Again the proof in this case is similar to the case of the induced $\Bis(\cG)$-representation in Proposition \ref{prop:smooth}:
We just note that by \cite[Remark 2.10]{AS17}, $R \colon \SG(\alpha) \rightarrow R_{\SG(\alpha)} \subseteq \Diff (G), f\mapsto R_f$ is a diffeomorphism onto a closed Lie subgroup. Now $R_{\SG(\alpha)}$ is a submanifold of $\Prop(G,G)$, whence we obtain a composition of smooth mappings
$$\rho_{\phi,S}(f)(\xi)=\ActG\circ(f, \xi\circ R_f)=\ActG_*\circ(f, \text{Comp}(\xi,R_f)).$$
Here $\ActG$ is the linear action of $\cG$ on $E$ induced by $\RepG$ (cf.\ Remark \ref{rem: linRep}). In conclusion $\rho_{\phi,S}$ is joint smooth.
\end{proof}

\begin{rem}[Semilinearity of the induced action]\label{Semilinearity1}
In the situation of Proposition \ref{prop: construct2}, semi-linearity $\rho_{\phi,S}(f)$ for $f\in\SG(\alpha)$ can be checked as follows: Consider the map
$$\rho_{\phi,S}^G(f) \colon C^{\infty}(G)\rightarrow C^{\infty}(G), \quad \rho_{\phi,S}^G(f)(h)=h\circ R_f$$
is an algebra isomorphism (obviously induced by $(\rho_{\phi,S})_G \coloneq R_f \in \Diff (G)$).
We thus obtain semi-linearity, by the following, now trivial, observation:
$$\rho_{\phi,S}(f)(h\xi)=\rho_{\phi,S}^G(f)(h)\rho_{\phi,S}(f)\xi, \quad h \in C^\infty (G), f\in \SG(\alpha), \xi \in \Gamma_c (\alpha^*E).$$
\end{rem}

\begin{rem}\label{embedded}
The pushforward $\alpha^*$ induces injective $\R$-linear maps
\begin{align*}
C^{\infty}(M) \rightarrow C^\infty (G) ,\quad f \mapsto f \circ \alpha ; \qquad
\Gamma(E)\rightarrow \Gamma(\alpha^*E),\quad \xi \mapsto \xi\circ\alpha.
\end{align*}

If $\cG$ is $\alpha$-proper, i.e.\ $\alpha$ is a proper map, these maps are smooth with respect to the function space topologies and restrict to a smooth map $\psi \colon \Gamma_c (E) \rightarrow \Gamma_c (\alpha^* E)$.
Recall from \cite[Theorem 4.11]{AS17} that the restriction yields a group monomorphism
  $$\Psi \colon \Bis (\cG) \rightarrow \SG (\alpha), \quad \sigma \mapsto \sigma \circ \alpha$$
 which is even a Lie group morphism for $\alpha$-proper groupoids. Note that by \cite[Example 4.13]{AS17} in general $\Psi(\Bis(\cG))\neq \SG(\alpha)$.
\end{rem}

  In the rest of the paper we denote by $\rho_{\RepG,B}$ the representation on $\Bis(\cG)$ induce by a representation $\phi\colon \cG\rightarrow \Phi(E)$ (see Proposition \ref{prop: construct1}). Similarly, the representation induced on $\SG(\alpha)$, introduced in Proposition \ref{prop: construct2}, is denoted by $\rho_{\RepG,S}$. The following result relates these representations.

\begin{prop}\label{prop:construct3}
Let $\RepG \colon \cG \rightarrow \Phi(E)$ be a smooth representation of an $\alpha$-proper Lie groupoid $\cG$.
Then $\rho_{\phi,S}$ induces a joint smooth, semi-linear and local representation $\rho_{B}\colon \Bis(\cG)\rightarrow \SL\big(\Gamma_c(E)\big)$ via
\begin{equation}\label{eq: res:ind}
\big(\rho_{B}(\sigma)\xi\big)(\alpha(x))=\big(\rho_{\phi,S}(\Psi(\sigma))\psi(\xi)\big)(x) \text{ for } \sigma\in \Bis(\cG), \xi\in \Gamma_c(E),\end{equation}
where $\Psi$ and $\psi$ are the maps from Remark \ref{embedded}. Furthermore, if $\cG$ has enough bisections, then $\rho_{B}=\rho_{\phi,B}$.
\end{prop}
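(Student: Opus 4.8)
The plan is to reduce the whole statement to Proposition \ref{prop: construct1} by unwinding the defining formula \eqref{eq: res:ind}. Since $\cG$ is $\alpha$-proper, Remark \ref{embedded} guarantees that $\psi(\xi)=\xi\circ\alpha$ is again compactly supported, so the right-hand side of \eqref{eq: res:ind} is meaningful. First I would substitute $f=\Psi(\sigma)=\sigma\circ\alpha$ into \eqref{eq: const:SGREP}, using $R_f(x)=x\cdot\sigma(\alpha(x))$ and the groupoid identity $\alpha\big(x\cdot\sigma(\alpha(x))\big)=\alpha\big(\sigma(\alpha(x))\big)$, to obtain
\begin{equation*}
\big(\rho_{\phi,S}(\Psi(\sigma))\,\psi(\xi)\big)(x)=\RepG\big(\sigma(\alpha(x))\big)\Big(\xi\big(\alpha\circ\sigma(\alpha(x))\big)\Big).
\end{equation*}
The right-hand side depends on $x$ only through $m\coloneq\alpha(x)$, and since $\alpha\colon G\to M$ is a surjective submersion this determines a unique section of $E\to M$; comparison with \eqref{indrep: BIS} identifies it with $\rho_\phi(\sigma)(\xi)$ from Proposition \ref{prop: construct1}. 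This is the crux of the argument: it shows simultaneously that $\rho_B$ is well defined, that $\rho_B(\sigma)\xi\in\Gamma_c(E)$ (Step 1 of Proposition \ref{prop: construct1}), and that \eqref{eq: res:ind} amounts to the intertwining relation $\psi\circ\rho_B(\sigma)=\rho_{\phi,S}(\Psi(\sigma))\circ\psi$.

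With the identification $\rho_B=\rho_\phi$ established, every remaining property is inherited for free: $\rho_B$ is a group homomorphism into $\SL(\Gamma_c(E))$, semi-linear with associated diffeomorphism $\alpha\circ\sigma$ (Proposition \ref{prop: construct1}), local (Remark \ref{locality}), and joint smooth (Proposition \ref{prop:smooth}). I would mention, but not rely on, the more intrinsic route: the homomorphism property and semi-linearity transfer through the injective $C^\infty_c(M)$-module map $\psi$, using $\psi(h\xi)=(h\circ\alpha)\cdot\psi(\xi)$ and the description of the algebra automorphism $h\mapsto h\circ R_{\sigma\circ\alpha}$ associated to $\rho_{\phi,S}(\Psi(\sigma))$ in Remark \ref{Semilinearity1}; joint smoothness would then follow from that of $\rho_{\phi,S}$ (Proposition \ref{prop: construct2}) together with smoothness of $\Psi$ and $\psi$ (Remark \ref{embedded}, where $\alpha$-properness is used), provided one knew that $\psi\colon\Gamma_c(E)\hookrightarrow\Gamma_c(\alpha^*E)$ is a closed topological embedding.

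For the final clause, $\rho_{\phi,B}$ is by convention the representation of Proposition \ref{prop: construct1}, so the equality $\rho_B=\rho_{\phi,B}$ is exactly the identification obtained above (and, note, it in fact holds for every $\alpha$-proper $\cG$). If one prefers to make the hypothesis of enough bisections visible, one can argue instead that $\rho_B$, being joint smooth, semi-linear and local, arises from a smooth representation $\phi'\colon\cG\to\Phi(E)$ by Proposition \ref{prop:construction3}, and then check $\phi'=\phi$ by evaluating on a bisection through an arbitrary $g\in G$. I expect the only genuinely non-formal point to be the embedding question for $\psi$ appearing in the intrinsic route for joint smoothness; my strategy sidesteps it entirely by leaning on the explicit identity $\rho_B=\rho_\phi$ and Proposition \ref{prop:smooth}, so I do not anticipate a serious obstacle.
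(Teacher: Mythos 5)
Your proof is correct, and it takes a genuinely different---and more economical---route than the paper's. The paper verifies each property of $\rho_B$ (homomorphism, locality, semi-linearity, joint smoothness) directly from the defining formula \eqref{eq: res:ind}, and only for the final clause invokes Proposition \ref{prop:construction3} (which is where the enough-bisections hypothesis enters) to reconstruct a groupoid representation $\tilde\phi$ from $\rho_B$ and then checks $\tilde\phi=\RepG$. You instead establish the identity $\rho_B=\rho_{\phi,B}$ at the outset via the computation $\big(\rho_{\phi,S}(\Psi(\sigma))\psi(\xi)\big)(x)=\RepG\big(\sigma(\alpha(x))\big)\xi\big(\alpha\circ\sigma(\alpha(x))\big)=\big(\rhoG(\sigma)\xi\big)(\alpha(x))$, after which everything is imported from Propositions \ref{prop: construct1} and \ref{prop:smooth} and Remark \ref{locality}. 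This buys two things. First, you are right that the final clause then holds for every $\alpha$-proper groupoid: the enough-bisections hypothesis is an artefact of the paper's detour through Proposition \ref{prop:construction3}, and indeed the paper's own closing computation of $\tilde\phi(g)h=\RepG(g)h$ is essentially your identity evaluated at $x=g^{-1}$. Second, you sidestep a delicate point in the paper's smoothness argument: deducing joint smoothness of $\rho_B$ from that of $\rho_{\phi,S}$ via \eqref{eq: res:ind} only shows that $\psi\circ\rho_B^\wedge$ is smooth, and to conclude smoothness of $\rho_B^\wedge$ itself one would additionally need $\psi$ to be an embedding onto a (split) submanifold of $\Gamma_c(\alpha^*E)$---exactly the non-formal point you flag; quoting Proposition \ref{prop:smooth} for $\rhoG$ avoids it entirely. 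The one respect in which the paper's version carries independent content is that its direct verifications of locality and semi-linearity exhibit the associated diffeomorphism $\alpha\circ\sigma$ from the $\SG(\alpha)$-side data of Remark \ref{Semilinearity1}, which is the template reused in Corollary \ref{locality3} where no underlying $\RepG$ is available; but for the proposition as stated your reduction is complete and cleaner.
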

\begin{proof}
Since $\cG$ is $\alpha$-proper, $\psi$ is a smooth mapping which takes compactly supported sections of $E$ to compactly supported sections of $\alpha^*E$. It is now easy to check that $\rho_{\RepG,S}(\Psi(\sigma))\psi(\xi)$ is an element of $\Gamma_c(\alpha^*E)$ which is fixed on the $\alpha$-fibres, so the definition of $\big(\rho_{B}(\sigma)\xi\big)(\alpha(x))$ makes sense.
Since $\Psi \colon \Bis(\cG)\rightarrow \SG(\alpha)$ is a group morphism,
\begin{align*}
\big(\rho_{B}(\sigma_1\star\sigma_2)\xi\big)(\alpha(x))&=\Big(\rho_{\phi,S}\big(\Psi(\sigma_1\star\sigma_2)\big)(\psi(\xi))\Big)(x)\\
&=\Big(\rho_{\phi,S}(\Psi(\sigma_1))\rho_{\phi,S}(\Psi(\sigma_2))(\psi(\xi))\Big)(x)\\
&=\Big(\rho_{\phi,S}(\Psi(\sigma_1))(\rho_{B}(\sigma_2)\xi)\circ\alpha\Big)(x)
=\Big(\rho_{B}(\sigma_1)\rho_{B}(\sigma_2)\xi\big)\Big)(\alpha(x)).
\end{align*}
By linearity of $\rho_{\phi,S}$ and the above computation, $\rho_{B}$ is a representation of $\Bis (\cG)$ on $\Gamma_c(E)$. We will now prove that this representation is semi-linear and local. For locality, let $\sigma(\alpha(x))=\alpha(x)$ then for every $\xi\in \Gamma_c(E)$ we derive locality as follows
\begin{align*}
\big(\rho_{B}(\sigma)\xi\big)(\alpha(x))&=\Big(\rho_{\RepG,S}\big(\Psi(\sigma)\big)\psi(\xi)\Big)(x)
=\RepG\Big(\big(\sigma\circ\alpha\big)(x)\Big)(\xi\circ\alpha)\big(x(\sigma\circ\alpha)(x)\big)\\
&=\RepG\Big(\sigma\big(\alpha(x)\big)\Big)\xi\big(\alpha\big(x\sigma(\alpha(x))\big)\big)
=\RepG\Big(\alpha(x)\Big)\xi\big(\alpha(x)\big)
=\xi\big(\alpha(x)\big).
\end{align*}
To establish semi-linearity, we recall from Remark \ref{Semilinearity1} that   $\rho_{\RepG,S}$ is semi-linear with respect to the associated diffeomorphism
$$\rho_{\RepG,S}(f)(w\xi)=R_f^* (w)\rho_{\RepG,S}(f)\xi\ \ \mbox{for every}\ \ w\in C^{\infty}(G), f\in \SG(\alpha)\ \mbox{and}\ \xi\in \Gamma_c(\alpha^*E).$$
Now  let $w\in C^{\infty}(M)$, $\sigma\in \Bis(\cG)$ and $\xi\in\Gamma_c(E)$.
\begin{align*}
\big(\rho_{B}(\sigma).w\xi\big)(\alpha(x))&=\Big(\rho_{\RepG,S}(\Psi(\sigma))\psi(w\xi)\Big)(x)
=\Big(\rho_{\RepG,S}(\sigma\circ\alpha)\big((w\circ\alpha)(\xi\circ\alpha)\big)\Big)(x)\\
&=R_{\sigma\circ\alpha}^*(\psi(w))\Big(\rho_{\RepG,S}(\sigma\circ\alpha)\xi\circ\alpha\Big)(x)\\
&= (w \circ \alpha \circ \sigma)\cdot\Big(\rho_{B}(\sigma)\xi\big)(\alpha(x)).
\end{align*}
For the last equality we have used $R_{\sigma\circ\alpha}^* (\psi (w)) (x)=w \circ \alpha\Big(x\big(\sigma\circ\alpha)(x)\Big)=(w\circ\alpha\circ\sigma)(\alpha(x))$. Since $\alpha \circ \sigma \in \Diff (M)$ we see that $\rho_B$ is semilinear with associated algebra isomorphism $\rho_{B}(\sigma)^M\colon C^{\infty}(M)\rightarrow C^{\infty}(M), w \mapsto (\alpha\circ \sigma)^*w$.
Since $\rho_{\phi,S}$ is joint smooth and $\Psi,\psi$ are smooth (since $\cG$ is $\alpha$-proper), \eqref{eq: res:ind} shows that $\rho_B$ is joint smooth.

Now Proposition \ref{prop:construction3} entails that $\rho_{B}$ induces a representation $\tilde{\phi}\colon G\rightarrow\Phi(E)$ via
$$\tilde{\phi} (g).h=\big(\rho_{B}(\sigma)\xi\big)(\beta(g)), \ \ \mbox{where}\ \ \sigma(\beta(g))=g, \mbox{and}\ \ \xi(\alpha(g))=h.$$
However, inversion in the groupoid intertwines source and target mappings, whence the definition of $\rho_{B}$ leads to
$$\begin{array}{ll}
\big(\rho_{B}(\sigma)\xi\big)(\beta(g))&=\Big(\rho_{\RepG,S}(\sigma\circ\alpha)(\xi\circ\alpha)\Big)(g^{-1})\\
&=\RepG\big((\sigma\circ\alpha)(g^{-1})\big)(\xi\circ\alpha)\big(g^{-1}(\sigma\circ\alpha)(g^{-1})\big)\\
&=\RepG(g)\xi(\alpha(g))=\RepG(g).h.
\end{array}
$$
Therefore, $\tilde{\phi}=\RepG$ and consequently $\rho_{B}=\rho_{\phi, B}$.
\end{proof}

The authors were not able to prove that every semi-linear representation of $\SG(\alpha)$ is induced by a representation of the underlying Lie groupoid (cf.\ Proposition \ref{prop: construct2}). Instead, we have the following (whose proof is similar to the one of Proposition \ref{prop:construction3}):

\begin{cor}\label{locality3}
Let $\cG$ be an $\alpha$-proper Lie groupoid and $\rho_{S} \colon \SG(\alpha) \rightarrow \SL (\Gamma_c(\alpha^*E))$ be a joint smooth, semi-linear representation, then
$\rho_{B}\colon \Bis(\cG)\rightarrow \SL\big(\Gamma_c(E)\big)$ with
$$\big(\rho_{B}(\sigma)\xi\big)(\alpha(x)):=\Big(\rho_{S}(\Psi(\sigma))\psi(\xi)\Big)(x)$$
is a joint smooth semi-linear  representation of $\Bis(\cG)$.
If $\rho_{S}$ is in addition a local representation of $\SG(\alpha)$, then $\rho_{B}$ is a local representation and therefore Proposition \ref{prop: construct1} implies $\rho_{B}=\rho_{\phi,B}$ for some representation $\phi$ of $\cG$.
\end{cor}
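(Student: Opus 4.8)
The plan is to obtain $\rho_B$ by restricting $\rho_S$ along the group monomorphism $\Psi$ and then descending along the inclusion $\psi$ of sections. Since $\cG$ is $\alpha$-proper, Remark~\ref{embedded} furnishes a Lie group morphism $\Psi\colon\Bis(\cG)\to\SG(\alpha)$ and a smooth linear map $\psi\colon\Gamma_c(E)\to\Gamma_c(\alpha^*E)$; moreover $\alpha^*\colon C^\infty_c(M)\hookrightarrow C^\infty_c(G)$ is an algebra monomorphism, and $\psi$ is a continuous linear injection whose image is precisely the closed subspace of \emph{$\alpha$-basic} sections, i.e.\ those $\eta\in\Gamma_c(\alpha^*E)$ with $\eta(x)=\eta(x')$ whenever $\alpha(x)=\alpha(x')$ (equivalently $\eta=\psi(1^*\eta)$, with $1\colon M\to G$ the unit section). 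Thus everything reduces to showing that $\rho_S(\Psi(\sigma))$ maps $\alpha$-basic sections to $\alpha$-basic sections; granting this, one simply sets $\rho_B(\sigma):=\psi^{-1}\circ\rho_S(\Psi(\sigma))\circ\psi$, which is exactly the formula of the statement evaluated at the unit arrow $1_m\in\alpha^{-1}(m)$, and $\alpha$-properness guarantees the outcome is again a smooth, compactly supported section of $E$.

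The heart of the matter, and the step I expect to be the main obstacle, is this $\alpha$-basicness. Semi-linearity of $\rho_S$ together with Lemma~\ref{lem: bunaut:SL} (applied to the bundle $\alpha^*E\to G$) presents $\rho_S(\Psi(\sigma))$ as a bundle automorphism of $\alpha^*E$ covering a diffeomorphism $b_\sigma\in\Diff(G)$; so the claim amounts to showing that $b_\sigma$ carries $\alpha$-fibres to $\alpha$-fibres and that the fibrewise (gauge) part of the automorphism depends only on the image point in $M$. In the \emph{induced} situation of Proposition~\ref{prop:construct3} this is transparent from the explicit formula $\rho_{\phi,S}(f).\xi(x)=\RepG(f(x))\xi(R_f(x))$ with $f=\Psi(\sigma)=\sigma\circ\alpha$ and the identity $\alpha\circ R_{\Psi(\sigma)}=(\alpha\circ\sigma)\circ\alpha$; for an abstract semi-linear $\rho_S$ one must instead leverage the representation property of $\rho_S$ and the special form of $\Psi(\sigma)$ (which is constant on every $\alpha$-fibre, $\Psi(\sigma)|_{\alpha^{-1}(m)}\equiv\sigma(m)$) — and, where available, the locality of $\rho_S$ — to pin down $b_\sigma$ as the natural action $\gamma_{\Bis}(\sigma,\cdot)=R_{\Psi(\sigma)}$ of Setup~\ref{setup:sgalph:act} and to control the gauge part. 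This is genuinely more delicate than the corresponding step in Proposition~\ref{prop:construct3}.

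Once well-definedness is secured, the remaining assertions follow along the lines of Proposition~\ref{prop:construct3}. That $\rho_B\colon\Bis(\cG)\to\GL(\Gamma_c(E))$ is a group morphism is immediate from $\Psi$ being a group morphism, $\rho_S$ a representation, and $\psi$ an injection intertwining the two actions. Semi-linearity is obtained from $\psi(w\xi)=(w\circ\alpha)\cdot\psi(\xi)$ together with semi-linearity of $\rho_S$ and the identity $w\circ\alpha\circ R_{\Psi(\sigma)}=(w\circ\alpha\circ\sigma)\circ\alpha$, which exhibits $\rho_B(\sigma)$ as semi-linear with associated diffeomorphism $\alpha\circ\sigma\in\Diff(M)$. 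Joint smoothness follows as in Proposition~\ref{prop:construct3}: one writes $\rho_B^{\wedge}=\psi^{-1}\circ\rho_S^{\wedge}\circ(\Psi\times\psi)$, a composite of smooth maps ($\Psi,\psi$ smooth by Remark~\ref{embedded}, $\rho_S^{\wedge}$ smooth by hypothesis) whose image lies in the closed subspace $\psi(\Gamma_c(E))\cong\Gamma_c(E)$.

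Finally, if $\rho_S$ is in addition local, then $\gamma_{\Bis}(\sigma,x)=x$ forces $\sigma(\alpha(x))=1_{\alpha(x)}$ and hence $\gamma_{\SG}(\Psi(\sigma),x)=x\cdot(\sigma\circ\alpha)(x)=x$, so locality of $\rho_S$ yields $\big(\rho_S(\Psi(\sigma))\psi(\xi)\big)(x)=\psi(\xi)(x)=\xi(\alpha(x))$; this is precisely locality of $\rho_B$ in the sense of Definition~\ref{locality1}. Thus $\rho_B$ is a joint smooth, semi-linear and local representation of $\Bis(\cG)$ on $\Gamma_c(E)$, and — $\cG$ having enough bisections being the standing hypothesis under which the converse construction is available — Proposition~\ref{prop:construction3} produces a smooth representation $\RepG\colon\cG\to\Phi(E)$ with $\rho_B=\rho_{\RepG,B}$.
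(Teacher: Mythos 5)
Your architecture is the right one and matches what the paper intends: restrict along $\Psi$, descend along $\psi$, and repeat the computations of Proposition~\ref{prop:construct3} (semi-linearity with associated diffeomorphism $\alpha\circ\sigma$, locality via $\gamma_{\SG}(\Psi(\sigma),x)=x$ whenever $\sigma(\alpha(x))=1_{\alpha(x)}$, joint smoothness from $\rho_B^{\wedge}=\psi^{-1}\circ\rho_S^{\wedge}\circ(\Psi\times\psi)$). Those parts of your argument are correct, and you are also right that the final identification $\rho_B=\rho_{\phi,B}$ needs ``enough bisections'' and really invokes Proposition~\ref{prop:construction3} rather than Proposition~\ref{prop: construct1}.

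However, there is a genuine gap, and it sits exactly where you place it: you never actually prove that $\rho_S(\Psi(\sigma))$ maps $\psi(\Gamma_c(E))$ (the $\alpha$-basic sections) into itself, nor that the diffeomorphism $b_\sigma\in\Diff(G)$ attached to the semi-linear operator $\rho_S(\Psi(\sigma))$ by Lemma~\ref{lem: bunaut:SL} descends to $\alpha\circ\sigma$ on $M$. You describe what ``one must'' do to pin down $b_\sigma$ and control the gauge part, but no argument is given. This is not a cosmetic omission: without $\alpha$-basicness of $\rho_S(\Psi(\sigma))\psi(\xi)$ the defining formula is not well posed (the right-hand side may depend on the choice of $x\in\alpha^{-1}(m)$), the homomorphism property fails because $\psi(\rho_B(\sigma_2)\xi)$ need not equal $\rho_S(\Psi(\sigma_2))\psi(\xi)$, and the semi-linearity computation cannot be run since $\mu^G(w\circ\alpha)$ need not lie in $\alpha^*C^\infty_c(M)$. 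For the induced representation $\rho_{\phi,S}$ treated in Proposition~\ref{prop:construct3} all of this is immediate from the explicit formula \eqref{eq: const:SGREP} together with $\alpha\circ R_{\Psi(\sigma)}=(\alpha\circ\sigma)\circ\alpha$ and Remark~\ref{Semilinearity1}; for an abstract joint smooth semi-linear (and, in the second part, local) $\rho_S$ this invariance is precisely the nontrivial content of the corollary. The paper itself disposes of the corollary with the remark that the proof is ``similar to'' the induced case, so your diagnosis of the obstacle is sharper than the source text; but to count as a proof you would have to supply the missing lemma, e.g.\ by using locality and the fact that $\Psi(\sigma)$ is constant on $\alpha$-fibres to show $b_\sigma=R_{\Psi(\sigma)}$ and that the fibrewise part of the automorphism factors through $\alpha$.
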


The representation $\phi \colon \cG \rightarrow \Phi(E)$ associated to $\rho_S$ in Corollary \ref{locality3} induces a semi-linear representation $\rho_{\phi, S}$ of $\SG(\alpha)$. In the following, we show that the restriction of two sections $\rho_{S}(f)\xi$ and $\rho_{\phi, S}(f)\xi$ on $M$ are equal, where $f\in \SG(\alpha), \xi\in \Gamma_c(\alpha^*E)$, note that the base space of the pullback bundle $\alpha^*E$ is $G$ .

\begin{prop}\label{Construct 2}
  Let $\cG$ be an $\alpha$-proper Lie groupoid which has enough bisection and $\rho_{S} \colon \SG(\alpha)\rightarrow \SL\big(\Gamma_c(\alpha^*E)\big)$ be a joint smooth local semi-linear representation, then there is a smooth representation $\phi\colon \cG\rightarrow \Phi(E)$ such that the sections $\rho_{S}(f)\xi$ and $\rho_{\phi, S}(f)\xi$ are equal on $M$, where $f\in \SG(\alpha), \xi\in \Gamma_c(\alpha^*E)$ and $\big(\rho_{\phi, S}(f)\xi)(x)=\phi(f(x))\xi(xf(x))$.
\end{prop}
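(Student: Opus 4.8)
The plan is to feed the representation $\rho_S$ through the machinery already assembled for $\Bis(\cG)$. First I would invoke Corollary \ref{locality3}: since $\cG$ is $\alpha$-proper and $\rho_S$ is joint smooth, semi-linear and local, the restriction along the Lie group morphism $\Psi \colon \Bis(\cG)\rightarrow \SG(\alpha)$ and the smooth map $\psi\colon \Gamma_c(E)\rightarrow\Gamma_c(\alpha^*E)$ yields a joint smooth, semi-linear, local representation $\rho_B\colon \Bis(\cG)\rightarrow \SL(\Gamma_c(E))$, and (because $\cG$ has enough bisections) Proposition \ref{prop:construction3} / Proposition \ref{prop: construct1} produce a smooth representation $\phi\colon\cG\rightarrow\Phi(E)$ with $\rho_B=\rho_{\phi,B}$. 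This $\phi$ is the candidate; it remains only to compare $\rho_S(f)\xi$ with $\rho_{\phi,S}(f)\xi$ after restriction to $M\subseteq G$.

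The comparison is a pointwise computation at each $m\in M$. Fix $f\in\SG(\alpha)$ and $\xi\in\Gamma_c(\alpha^*E)$, and evaluate at $m\in M$. Here I would exploit that $\cG$ has enough bisections to locally "straighten" $f$ near $m$ by a bisection: more precisely, using that $R\colon \SG(\alpha)\rightarrow\Diff(G)$ is a diffeomorphism onto a closed subgroup and the evaluation $\ev\colon\Bis(\cG)\times M\rightarrow G$ is a surjective submersion (Corollary \ref{cor: sub:fin} together with enough bisections, as in Lemma \ref{lem: sm:back}), one can reduce the value $(\rho_S(f)\xi)(m)$ to an expression governed by a bisection $\sigma$ with $\sigma(\beta(f(m)))=f(m)$ (note $\beta(f(m))=\alpha(m)=m$ since $m\in M$). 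Then locality of $\rho_S$ plus the defining relation \eqref{eq: res:ind} identify $(\rho_S(f)\xi)(m)$ with $(\rho_B(\sigma)\xi_m)(m)$ for an appropriate local section $\xi_m$ of $E$ with $\xi_m(\alpha(f(m)))=\xi(R_f(m))$; using $\rho_B=\rho_{\phi,B}$ and the formula \eqref{indrep: BIS} this equals $\phi(f(m))(\xi(R_f(m)))$, which is exactly $(\rho_{\phi,S}(f)\xi)(m)$ by \eqref{eq: const:SGREP}. Independence of the auxiliary choices of $\sigma$ and $\xi_m$ follows from locality and the computations in Step 3 of Proposition \ref{prop: iso:VB}, exactly as in Proposition \ref{prop:construction3}.

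The main obstacle is the second step: unlike in the $\Bis(\cG)$ case, a general $f\in\SG(\alpha)$ need not lie in the image of $\Psi$ (Remark \ref{embedded}), so one cannot simply write $f=\sigma\circ\alpha$ globally. The trick is that we only need to recover the \emph{value} of $\rho_S(f)\xi$ at a point $m\in M$, and there $f(m)$ is a single arrow of $G$; having enough bisections lets us pick a bisection agreeing with $f$ at that single point, and locality of $\rho_S$ guarantees the discrepancy $f\star(\Psi(\sigma))^{-1}$ (or the analogous correction in $\SG(\alpha)$) fixes the relevant fibre. Making this local-straightening argument precise — in particular checking that the correction element of $\SG(\alpha)$ indeed acts trivially on the fibre $E_{\alpha(m)}$ via $\rho_S$, using the $\SG(\alpha)$-version of the locality condition in Definition \ref{locality1} with $\gamma_{\SG}(\cdot,m)=m$ — is the crux; once it is in place the identity $\rho_S(f)\xi|_M=\rho_{\phi,S}(f)\xi|_M$ drops out by the same bookkeeping as in Proposition \ref{prop:construction3}.
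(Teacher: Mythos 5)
Your proposal is correct and follows essentially the same route as the paper: the paper also obtains $\phi$ by first restricting $\rho_S$ along $\Psi$ and $\psi$ to get $\rho_B$ (Corollary \ref{locality3}), feeding that into Proposition \ref{prop:construction3}, and then comparing $\rho_S(f)\xi$ with $\rho_{\phi,S}(f)\xi$ pointwise on $M$ via a bisection $\sigma_{g'}$ through $g'=f(m)$, with locality and semi-linearity guaranteeing independence of the auxiliary choices. The only cosmetic difference is that the paper writes the explicit formula $\phi(g)h=(\rho_S(\Psi(\sigma_g))\psi(\xi_h))(\beta(g))$ first and then identifies it with the induced representation, whereas you take the induced representation as the definition and make the local-straightening step (which the paper leaves somewhat implicit in the phrase ``the definition of $\phi$ implies'') fully explicit.
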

\begin{proof}
Since $\cG$  has enough bisection, we choose and fix for every $g\in G$ a bisection $\sigma_g\in \Bis(\cG)$ with $\sigma_g(\beta(g))=g$. Define $f_g=\Psi(\sigma_g)$ to obtain an element $f_g\in S_{\cG}(\alpha)$ with $f_g(\beta(g))=g$.
Now \cite[Lemma 11.8]{MR1930277} allows us to choose for $h\in E_{\alpha(g)}$  a section $\xi_h\in \Gamma_c(E)$ with $\xi_h(\alpha(g))=h$.
Further, $\psi(\xi_h)\in \Gamma_c(\alpha^*E)$ and $\psi(\xi_h)(g)=h$.

We define a representation $\cG$ via
$$\phi\colon G\rightarrow \Phi(E), \quad \phi(g)h\coloneq \big(\rho_{S}(f_g)\xi_h\big)(\beta(g)), \text{ where } g\in G, h\in E_{\beta(g)}.
$$
To see that this makes sense, let $\rho_B\colon \Bis(\cG)\rightarrow \SL\big(\Gamma_c(E)\big)$ be the representation from Proposition \ref{prop:construct3} which by Corollary \ref{locality3} is joint smooth, semi-linear and local, whence by Proposition \ref{prop: construct2} induces a  representation $\eta \colon G\rightarrow \Phi(E)$. Now let $g\in G$ and $h\in E_{\alpha(g)}$.
By definition we have $(\sigma_g\circ\alpha)(\beta(g))=g$ (using the identififaction $M \subseteq G$) and $\xi_h (\alpha (g)) = h$ and thus
\begin{align*}
\phi(g)h&=\big(\rho_{S}(\sigma_g\circ\alpha)(\xi_h \circ\alpha)\big)(\beta(g))=\rho_B(\sigma_g)\xi_h(\beta(g))\\
&=\rho_{\eta}(\sigma_g)\xi_h(\beta(g))=\eta(\sigma_g(\beta(g)))\xi_h(\alpha\circ\sigma(\beta(g)))\\
&=\eta(g)\xi_h(\alpha(g))=\eta(g)h.
\end{align*}
Therefore $\phi=\eta$ and $\phi$ is a representation of the Lie groupoid $\cG$
Now let $\rho_{\phi,S}$ be as in Proposition \ref{prop: construct2}, then for $f\in S_{\cG}(\alpha), \xi\in \Gamma_c(\alpha^*E)$ and $m\in M$,
$$\big(\rho_{\phi,S}(f)\xi\big)(m)=\phi(f(m))\xi(mf(m))=\phi(f(m))\xi(f(m))=\phi(g')h',$$
where $g'=f(m)$ and $h'=\xi(f(m))$. Since $f(\beta(g'))=f\big(\beta(f(m))\big)=f(\alpha(m))=f(m)=g'$, and $h'=\xi(g')$,  the definition of $\phi$ implies that $$\phi(g')h'=\big(\rho_{S}(f)\xi\big)(\beta(g'))=\big(\rho_{S}(f)\xi\big)(\beta(f(m)))=\big(\rho_{S}(f)\xi\big)(m).$$
We conclude that $\rho_{\phi,S} (f)\xi (m) = \phi(g').h' = \rho_S (f)\xi(m)$ and thus the sections coincide on $M$.
\end{proof}

\subsection*{Functorial aspects of the construction}
\addcontentsline{toc}{subsection}{Functorial aspects of the construction}

In this section we discuss the functorial aspects of the constructions in the last two sections.
Recall that for a Lie groupoid $\cG = (G\toto M)$ we denote by $\Rep (\cG)$ the category of Lie groupoid representations. In Proposition \ref{prop: construct1} and Proposition \ref{prop:construct3} we have seen that each representation induces a smooth (semi-linear and local) representation of the groups $\Bis (\cG)$ and $\SG(\alpha)$. We will now compute that the construction is even functorial:

\begin{prop}\label{prop: fun}
Let $\delta \colon \phi_1 \rightarrow \phi_2$ be a morphism between representations $\phi_1 \colon \cG \rightarrow \Phi(E),\phi_2 \colon \cG \rightarrow \Phi(F) \in \Rep(\cG)$ and $\rho_{\phi_i}$, $\rho_{\phi_i,S}$ for $i\in \{1,2\}$ the induced representations of $\Bis (\cG)$ and $\SG(\alpha)$.
Then $\delta$ induces morphisms of representations
\begin{align*}
\rho (\delta) \colon \Gamma_c (E) \rightarrow \Gamma_c(F), \quad X \mapsto \delta \circ X\\
\rho_S (\delta) \colon \Gamma_c (\alpha^*E) \rightarrow \Gamma_c(\alpha^*F), Y \mapsto (\alpha^*\delta) \circ Y,
\end{align*}
where $\alpha^*\delta \colon \alpha_*E\rightarrow \alpha^*F$ is the bundle morphism induced by pulling back $\delta$ along $\alpha$.
\end{prop}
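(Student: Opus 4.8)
The statement has three parts to verify for each of the two maps $\rho(\delta)$ and $\rho_S(\delta)$: (i) the map is well-defined, i.e.\ it sends compactly supported sections to compactly supported sections and is continuous/linear; (ii) it is equivariant with respect to the induced group actions; (iii) it is a morphism of representations in the categorical sense (which, given the definition of $\Rep(K)$, is just (i) plus (ii)). I would treat $\rho(\delta)$ in full detail and then remark that $\rho_S(\delta)$ is entirely analogous, replacing $E,F$ by $\alpha^*E,\alpha^*F$, $\Bis(\cG)$ by $\SG(\alpha)$, and the diffeomorphism $\alpha\circ\sigma$ of $M$ by $R_f$ of $G$.

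\textbf{Well-definedness.} Since $\delta\colon E\rightarrow F$ is a base-preserving bundle morphism, postcomposition $X\mapsto \delta\circ X$ sends a section of $E$ to a section of $F$ (the base point is unchanged), and it is fibrewise linear, hence $\R$-linear as a map $\Gamma_c(E)\rightarrow\Gamma_c(F)$. Because $\delta$ is base-preserving, $\supp(\delta\circ X)\subseteq \supp(X)$, so compact support is preserved. Continuity with respect to the fine very strong topologies is the statement that $\delta_*\colon \CfS(M,E)\rightarrow\CfS(M,F)$ restricts to the closed submanifolds $\Gamma_c(E)\rightarrow\Gamma_c(F)$; this is immediate from \cite[Corollary 10.14]{michor1980} (smoothness, a fortiori continuity, of pushforward by a smooth map) together with the fact, used already in the proof of Proposition \ref{prop:smooth}, that $\Gamma_c(E)$ is a closed submanifold of $\CfS(M,E)$ cut out by $\pi_*$. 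For $\rho_S(\delta)$ one uses instead that $\alpha^*\delta$ is a base-preserving bundle morphism of bundles over $G$, so the same argument applies verbatim over $G$.

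\textbf{Equivariance.} This is the only computation, and it is short. Fix $\sigma\in\Bis(\cG)$, $X\in\Gamma_c(E)$ and $m\in M$. Unwinding \eqref{indrep: BIS} for $\phi_1$ and applying $\delta$,
\begin{align*}
\big(\rho(\delta)(\rho_{\phi_1}(\sigma)X)\big)(m)
&= \delta\Big(\phi_1(\sigma(m)).X(\alpha\circ\sigma(m))\Big)\\
&= \phi_2(\sigma(m)).\,\delta\big(X(\alpha\circ\sigma(m))\big)\\
&= \big(\rho_{\phi_2}(\sigma)(\rho(\delta)X)\big)(m),
\end{align*}
where the middle equality is exactly the defining property $\delta\circ\ActG_1(g,\cdot)=\ActG_2(g,\cdot)\circ\delta$ of a morphism of Lie groupoid representations (Definition after Remark \ref{rem: linRep}), applied with $g=\sigma(m)$. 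Hence $\rho(\delta)\circ\rho_{\phi_1}(\sigma)=\rho_{\phi_2}(\sigma)\circ\rho(\delta)$ for all $\sigma$, which is $\Bis(\cG)$-equivariance. For $\rho_S(\delta)$ the identical computation with \eqref{eq: const:SGREP} in place of \eqref{indrep: BIS}, evaluating at $x\in G$, using $R_f(x)=xf(x)$ and the intertwining property of $\alpha^*\delta$ (which holds fibrewise since it is pulled back from $\delta$), gives $\SG(\alpha)$-equivariance.

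\textbf{Main obstacle.} There is essentially no obstacle: every ingredient — preservation of support, continuity via pushforward, and the one-line intertwining identity — is already available from the earlier sections, and the proof is a matter of assembling them. The only point requiring a word of care is the claim that $\alpha^*\delta$ is again a base-preserving bundle morphism intertwining the pulled-back linear actions $\alpha^*\Phi_i$; this follows formally from the universal property of the pullback bundle, since $\alpha^*\delta$ is defined on fibres by $(\alpha^*\delta)_x = \delta_{\alpha(x)}$ and the linear action of $\cG$ on $\alpha^*E$ used in Proposition \ref{prop: construct2} is the one induced fibrewise from $\Phi_i$. I would state this explicitly and then let the $\rho_S(\delta)$ half of the proof reduce to the $\rho(\delta)$ half.
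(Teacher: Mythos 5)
Your proposal is correct and follows essentially the same route as the paper's own proof: well-definedness and smoothness of the pushforward via \cite[Corollary 10.14]{michor1980}, and equivariance by a one-line computation using the intertwining property $\delta\circ\ActG_1(g,\cdot)=\ActG_2(g,\cdot)\circ\delta$, with the $\SG(\alpha)$ case handled analogously. Your additional remarks on support preservation and on $\alpha^*\delta$ being fibrewise $\delta_{\alpha(x)}$ only make explicit what the paper leaves implicit.
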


\begin{proof}
Note first that $\delta_* \colon \Gamma_c (E) \rightarrow \Gamma_c (F),\ \delta_* (X) = \delta \circ X$ and $(\alpha^*\delta)_* \colon \Gamma_c (\alpha^* E) \rightarrow \Gamma_c (\alpha^*F), \alpha^*\delta)^*(X)=\alpha_*\delta \circ X$ makes sense, as $\delta$ and $\alpha^*\delta$ are bundle morphism over the identity. Clearly $\delta_*$ and $(\alpha_*\delta)^*$ are linear and smooth by \cite[Corollary 10.14]{michor1980}.
Thus we only have to check that the mappings are equivariant with respect to the group actions. However, using equivariance of $\delta$ with respect to the linear action maps $\Phi_1,\Phi_2$ associated to $\phi_1,\phi_2$ one obtains
\begin{align*}
\rho_{\phi_2} (\sigma). (\delta_*(\xi))(m) =& \phi_2 (\sigma (m)).\delta \circ X \circ (\alpha \circ \sigma(m)) = \Phi_2 (\sigma (m) , \delta \circ X \circ (\alpha \circ \sigma(m)) \\ =& \delta ((\Phi_1 (\sigma (m)).(X\circ \alpha \circ \sigma (m)))  \\ =& \delta_* \rho_{\phi_1} (\sigma) (X)(m) \quad \forall \sigma \in \Bis (\cG) , \xi  \in \Gamma_c (M) \text{ and } m\in M.
\end{align*}
Thus $\delta_*$ is a morphism $\rho_{\phi_1} \rightarrow \rho_{\phi_2}$ in $\Rep (\Bis (\cG))$.
For the $\SG(\alpha)$ statement, we observe first that for $\xi \in \Gamma_c(\alpha_*E)$ we have $\xi (R_f (x)) \in E_{\alpha \circ R_f (x)} = E_{\alpha \circ f (x)}$ for all $x\in G$ and $f\in \SG(\alpha)$.
We compute then (with a harmless identification):
\begin{align*}
\rho_{\phi_2,S} (f)((\alpha_*\delta)^*(\xi))(x) =& \phi_2 (f(x)).(\alpha^*\delta)_*\xi(R_f(x))  =  \Phi_2 (f(x),\delta (\xi(R_f(x))))\\
						=& \delta (\Phi_1 (f(x), \xi(R_f(x))) = (\alpha^*\delta)_* \rho_{\phi_1, S} (f)(\xi)(x) \qedhere
\end{align*}
\end{proof}

As every element in a vector bundle is contained in the image of a compactly supported section, for bundle morphisms $\delta \neq \delta'$  we have $\rho(\delta) \neq \rho (\delta')$ (similarly for $\rho_S$).

\begin{cor}
Combining Propositions \ref{prop: construct1}, \ref{prop:construct3} and  \ref{prop: fun} we obtain faithful functors
\begin{align*}
 \rho \colon \Rep(\cG) \rightarrow \Rep (\Bis (\cG)), (\phi \colon \cG \rightarrow \Phi (E)) \mapsto \rho(\phi), (\delta \colon \phi \rightarrow \phi') \mapsto \rho (\delta)\\
  \rho_S \colon \Rep(\cG) \rightarrow \Rep (\SG (\alpha)), (\phi \colon \cG \rightarrow \Phi (E)) \mapsto \rho_S(\phi), (\delta \colon \phi \rightarrow \phi') \mapsto \rho_S (\delta)
\end{align*}
\end{cor}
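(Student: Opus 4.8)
The plan is to verify that the two assignments $\rho$ and $\rho_S$ satisfy the axioms of a functor and then check faithfulness; almost all of the heavy lifting has already been done in the cited Propositions, so the argument is essentially a matter of bookkeeping. First I would note that on objects the assignments are well-defined: Proposition~\ref{prop: construct1} shows that a representation $\phi \colon \cG \to \Phi(E)$ gives a semi-linear, local representation $\rho_\phi$ of $\Bis(\cG)$ on $\Gamma_c(E)$, and Proposition~\ref{prop:smooth} shows it is joint smooth; similarly Proposition~\ref{prop: construct2} handles $\rho_{\phi,S}$ on $\SG(\alpha)$. (The reference to Proposition~\ref{prop:construct3} in the statement is presumably meant to pin down how the $\SG(\alpha)$-construction interacts with the bisection construction, but for mere functoriality only Propositions~\ref{prop: construct1} and \ref{prop: construct2} are needed on objects.) On morphisms, Proposition~\ref{prop: fun} shows that a morphism $\delta \colon \phi_1 \to \phi_2$ in $\Rep(\cG)$ is sent to a morphism $\rho(\delta) = \delta_*$ in $\Rep(\Bis(\cG))$ and to $\rho_S(\delta) = (\alpha^*\delta)_*$ in $\Rep(\SG(\alpha))$; in particular these are continuous linear and equivariant, so they really are morphisms in the target categories.

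Next I would check the two functor axioms. For identities: if $\delta = \id_E$ then $\delta_* = \id_{\Gamma_c(E)}$ and $(\alpha^*\id_E)_* = \id_{\Gamma_c(\alpha^*E)}$, since pulling back the identity bundle map along $\alpha$ gives the identity. For composition: given $\delta \colon \phi_1 \to \phi_2$ and $\delta' \colon \phi_2 \to \phi_3$, one has $(\delta' \circ \delta)_*(X) = (\delta'\circ\delta)\circ X = \delta'_*(\delta_*(X))$, so $\rho(\delta'\circ\delta) = \rho(\delta')\circ\rho(\delta)$; and because pullback along $\alpha$ is itself functorial, $\alpha^*(\delta'\circ\delta) = (\alpha^*\delta')\circ(\alpha^*\delta)$, giving $\rho_S(\delta'\circ\delta) = \rho_S(\delta')\circ\rho_S(\delta)$. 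These are immediate from the pointwise formulas in Proposition~\ref{prop: fun} and need no further input.

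Finally, faithfulness. Suppose $\delta, \delta' \colon \phi_1 \to \phi_2$ are morphisms in $\Rep(\cG)$ with $\rho(\delta) = \rho(\delta')$, i.e.\ $\delta\circ X = \delta'\circ X$ for every $X \in \Gamma_c(E)$. By \cite[Lemma 11.8]{MR1930277}, every point $v \in E$ lies in the image of some compactly supported section $X$ with $X(\pi(v)) = v$; evaluating $\delta\circ X = \delta'\circ X$ at $\pi(v)$ gives $\delta(v) = \delta'(v)$. As $v$ was arbitrary, $\delta = \delta'$, so $\rho$ is faithful. The same argument applied to $\Gamma_c(\alpha^*E)$ (every element of $\alpha^*E$ is the value of a compactly supported section) shows $\rho_S$ is faithful. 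I do not expect any genuine obstacle here: the only mild point to get right is that faithfulness requires compactly supported sections to separate points of the bundle, which is exactly the content of the cited lemma, and that pullback along $\alpha$ preserves identities and composition of bundle morphisms, which is formal.
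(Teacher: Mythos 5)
Your proposal is correct and follows essentially the same route as the paper, which offers no separate proof beyond the cited propositions and the one-line remark preceding the corollary that compactly supported sections hit every point of the bundle (giving faithfulness). Your explicit verification of the identity and composition axioms, and your observation that the object-level input for $\rho_S$ is really Proposition \ref{prop: construct2}, merely make explicit what the paper leaves implicit.
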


\subsection*{Acknowledgements}

The authors thank K.--H.\ Neeb for helpful conversations on the subject of this work. We also thank the anonymous referee for numerous comments and suggestions which helped improve the article.

\appendix{

\section{Infinite-dimensional manifolds and manifolds of mappings}\label{App:Mfd}

In this appendix we collect the necessary background on the theory of manifolds
that are modelled on locally convex spaces and how spaces of smooth maps can be
equipped with such a structure. Let us first recall some basic facts concerning
differential calculus in locally convex spaces. We follow
\cite{hg2002a,BGN04}.

\begin{defn}\label{defn: deriv} Let $E, F$ be locally convex spaces, $U \subseteq E$ be an open subset,
$f \colon U \rightarrow F$ a map and $r \in \N_{0} \cup \{\infty\}$. If it
exists, we define for $(x,h) \in U \times E$ the directional derivative
$$df(x,h) \coloneq D_h f(x) \coloneq \lim_{t\rightarrow 0} t^{-1} \big(f(x+th) -f(x)\big).$$
We say that $f$ is $C^r$ if the iterated directional derivatives
\begin{displaymath}
d^{(k)}f (x,y_1,\ldots , y_k) \coloneq (D_{y_k} D_{y_{k-1}} \cdots D_{y_1}
f) (x)
\end{displaymath}
exist for all $k \in \N_0$ such that $k \leq r$, $x \in U$ and
$y_1,\ldots , y_k \in E$ and define continuous maps
$d^{(k)} f \colon U \times E^k \rightarrow F$. If $f$ is $C^\infty$ it is also
called smooth. We abbreviate $df \coloneq d^{(1)} f$ and for curves $c \colon I \rightarrow M$ on an interval $I$, we also write $\dot{c} (t) \coloneq \frac{\dd}{\dd t} c (t) \coloneq dc(t,1)$.
\end{defn}

Note that for $C^r$-mappings in this sense the chain rule holds. Hence for $r \in \N_0 \cup \{\infty\}$, manifolds modelled on locally convex spaces can be defined as usual.
The model space of a locally convex manifold and the manifold as a topological space will always be assumed to be Hausdorff spaces.
However, we will not assume that infinite-dimensional manifolds are second countable or paracompact.
We say $M$ is a \emph{Banach} (or \emph{Fr\'echet}) manifold if all its modelling spaces are Banach (or
Fr\'echet) spaces.

Direct products of locally convex manifolds, tangent spaces and tangent bundles as well as $C^r$-maps between manifolds and vector bundles with locally convex fibre may be defined as in the finite-dimensional setting.
Furthermore, we define \emph{(locally convex) Lie groups} as groups with a $C^\infty$-manifold structure turning the group operations into $C^\infty$-maps. See \cite{neeb2006,hg2002a} for more details.

\begin{setup}\label{fS:top}
For smooth manifolds $M,N$ we let $C^\infty (M,N)$ be the set of smooth mappings $f \colon M \rightarrow N$. If $M$ is a finite-dimensional (not necessarily compact) manifold and $N$ is a locally convex manifold. We write $\CfS (M,N)$ for the space of smooth functions with the fine very strong topology we will describe now.
 This is a Whitney type topology controlling functions and their derivatives on locally finite families of compact sets. Before we describe a basis of the fine very strong topology, we have to construct a basis for the strong topology which we will then refine. To this end, we recall the so called basic neighbourhoods (see \cite{HS2016}) for $f\in C^\infty (M,N)$: Let $A$ be compact, $\varepsilon >0$ and fix a pair of charts $(U, \psi)$ and $(V,\varphi)$ such that $A \subseteq V$ and $\psi \circ f \circ \varphi^{-1}$ makes sense.
 Using standard multiindex notation, define an \emph{elementary $f$-neighborhood}
 $$\mathcal{N}^{r} \left( f; A , \varphi,\psi,\epsilon \right) \coloneq \left\{\substack{\displaystyle g \in C^\infty (M,N), \quad \psi \circ g|_A \quad\text{ makes sense,}\\ \displaystyle\sup_{\alpha \in \N_0^d, |\alpha|< r }\sup_{x \in \varphi (A)}\lVert \partial^\alpha \psi \circ f \circ \varphi^{-1}(x) - \partial^\alpha\psi \circ g \circ \varphi^{-1}(x)\rVert < \varepsilon}\right\}.$$ 
 A basic neighborhood of $f$ arises now as the intersection of (possibly countably many) elementary neighborhoods $\mathcal{N}^{r} \left( f; A_i , \varphi_i,\psi_i,\epsilon_i \right)$ where the family $(V_i,\varphi_i)_{i\in I}$ is locally finite. Basic neighborhoods form the basis of the very strong topology (see \cite{HS2016}). To obtain the fine very strong topology, one declares the sets  
 \begin{equation}\label{eq: cpt:nbhd}
 \{g \in C^\infty (M,N) \mid \exists K \subseteq M \text{ compact such that } \forall x \in M\setminus K,\ g(x) =f(x) \} \tag{$\star$}
 \end{equation}
 to be open and constructs a subbase of the fine very strong topology as the collection of sets $\eqref{eq: cpt:nbhd}$ (where $f \in C^\infty (M,N)$) and the basic neighbourhoods.
  
 The fine very strong topology is an extremly fine topology, which has the advantage that $\CfS (M,N)$ becomes an infinite-dimensional manifold (cf.\ \cite{michor1980} and \cite{HS2016}). However, if $M$ is compact, the fine very strong topology coincides with the familiar compact open $C^\infty$-topology (see \cite[I.5]{neeb2006}). If $N = \R^n$ then the pointwise operations turn $C^\infty_{\text{fS}} (M,\R^n)$ into a locally convex vector space.
\end{setup}

\begin{setup}\label{setup: sections}
Let $q \colon Y \rightarrow X$ be a surjective submersion of smooth finite-dimensional manifolds.
Then we denote by $S_q (X,Y) \subseteq \CfS (X,Y)$ the smooth sections of $q$ (i.e.\ smooth mappings $s$ with $q \circ s = \id_X$).
Recall from \cite[Proposition 10.10]{michor1980} that $S_q (X,Y)$ is a splitting submanifold of $\CfS (X,Y)$.

Mostly we will be interested in the following special case: Let $(E,\pi, M)$ be vector bundle, then we let $\Gamma (E) := S_\pi (M,E)$ be the space of smooth sections of the bundle.
Consider for $f \in \Gamma(E)$ its \emph{support}
$$\text{supp } f=\overline{\{x\in M : f(x)\notin \mathrm{im}\, 0_M\}},$$ where $\bar{A}$ denotes the closure of the set $A$ and $0_M$ is the zero section. We denote the set of all \emph{compactly supported smooth section} by $\Gamma_c(E)$ and recall that the subspace topology turns $\Gamma_c (E) \subseteq S_\pi (M,E) \subseteq \CfS (M,E)$ into an open submanifold of $\Gamma (E)$.
We further recall that with respect to this structure the pointwise operations turn $\Gamma_c (E)$ into a locally convex space. Moreover, the pointwise operations turn $\Gamma (E)$ and $\Gamma_c(E)$ into $C^{\infty}(M)$-modules (though not into topological modules if $M$ is non-compact).
\end{setup}

\begin{rem}
In the above we endowed every function space with the fine very strong topology (respectively the subspace topologies induced by it). However, we wish to consider $\Gamma (E)$ as a locally convex space and it is well known that the fine very strong topology does not turn it into a locally convex space if $M$ is non compact (cf.\ \cite[Proposition 4.7]{HS2016}). Hence whenever we speak of the locally convex space $\Gamma (E)$ we assume that $\Gamma(E)$ carries the compact-open $C^\infty$ topology (which is coarser then the fine very strong topology if $M$ is non-compact).  
\end{rem}

We prove now that the evaluation map on spaces of sections is a smooth submersion, i.e.\ that it is locally conjugate to a projection (cf.\ \cite{Glofun} for more information on submersions between infinite-dimensional manifolds).

\begin{lem}\label{lem: sect:ev}
Let $q \colon Y\rightarrow X$ be a smooth submersion of finite-dimensional manifolds, then the evaluation $\ev \colon S_q (X,Y) \times X \rightarrow Y, \quad (s,x)\mapsto s(x)$ is a smooth submersion.
\end{lem}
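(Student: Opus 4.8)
The plan is to reduce the statement to a purely local question and then invoke the fact (from \cite[Proposition 10.10]{michor1980}) that $S_q(X,Y)$ is a splitting submanifold of $\CfS(X,Y)$, together with the structure of local triviality of a submersion. First I would recall that the assertion "$\ev$ is a submersion" is local on the target $Y$: given $s_0 \in S_q(X,Y)$ and $x_0 \in X$ with $y_0 \coloneq s_0(x_0)$, it suffices to produce, near $(s_0,x_0)$, a chart in which $\ev$ looks like a projection onto a (finite-codimensional, complemented) factor. By the local normal form for submersions of finite-dimensional manifolds, we may choose charts so that $q$ becomes the projection $\R^m \times \R^k \to \R^m$ near $y_0$, with $x_0 \mapsto 0 \in \R^m$ and $y_0 \mapsto (0,0)$. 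In such a chart a section is described by its "vertical part", a smooth map defined near $0 \in \R^m$ with values in $\R^k$.

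The key step is then the description of the manifold chart on $S_q(X,Y)$ around $s_0$. Using the splitting submanifold structure, a neighbourhood of $s_0$ in $S_q(X,Y)$ is modelled on a locally convex space of (compactly-controlled, in the fine very strong sense) sections of the vertical subbundle $\ker Tq \to X$, or concretely on $\CfS$-type spaces of $\R^k$-valued functions on the chart domain. Under this identification the evaluation map becomes, after composing with the chosen charts, the map $(\zeta, x) \mapsto \big(x,\zeta(x)\big)$, where $\zeta$ ranges over the model space of vertical section-germs and $x$ over a chart domain in $X$; here I would use the exponential law / smoothness of evaluation on $\CfS(X,\R^k)$ (the relevant smoothness of $\ev \colon \CfS(X,\R^k)\times X \to \R^k$ follows from the manifold-of-mappings machinery cited in the appendix) to see this map is smooth, which re-proves smoothness of $\ev$ and sets up the submersion claim. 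The map $(\zeta,x)\mapsto (x,\zeta(x))$ is visibly a submersion: its derivative in the $\zeta$-direction alone, at fixed $x$, already surjects onto the $\R^k$-factor (take variations $\zeta$ supported near $x$ with prescribed value $\zeta(x)$), and the $x$-derivative of the first component is the identity on the $\R^m$-chart; moreover one can exhibit an explicit local section of $\ev$ through any point, namely send a nearby $(x,v)$ to a fixed smooth section interpolating the value $v$ at $x$ (bump-function construction), which shows local triviality directly.

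I expect the main obstacle to be bookkeeping with the fine very strong topology rather than anything conceptual: one must make sure that the "vertical section" chart really is modelled on a locally convex space for which evaluation is smooth and for which the bump-function interpolation lands back inside a chart neighbourhood — i.e.\ that shrinking to a small $x$-neighbourhood and using compactly supported modifications keeps us inside the prescribed basic neighbourhoods defining the topology (\ref{fS:top}). Once the chart is fixed and evaluation on $\CfS(X,\R^k)$ is known to be smooth, surjectivity of the derivative and the construction of local sections are routine; the remaining care is to patch the charts on $Y$ and $X$ so that the local normal form for $q$ is compatible with the submanifold charts on $S_q(X,Y)$, which is exactly the content of \cite[Proposition 10.10]{michor1980} and its proof.
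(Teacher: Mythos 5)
Your route is genuinely different from the paper's. The paper does not localise at all: it takes smoothness of $\ev$ from \cite[Corollary 11.7]{michor1980}, identifies $T_sC^\infty(X,Y)$ with compactly supported sections of $s^*TY$, writes down the tangent map $T_{(s,x)}\ev(Y_s,v_x)=Ts(v_x)+Y_s(x)$, checks surjectivity by evaluating sections (\cite[Lemma 11.8]{MR1930277}), and then invokes \cite[Theorem A]{Glofun}: a smooth map into a \emph{finite-dimensional} manifold whose tangent maps are all surjective is automatically a submersion. That citation is doing all the heavy lifting that your chart bookkeeping would otherwise have to do, and it is why the paper's proof is half a page. Your approach (local normal form for $q$, the splitting-submanifold chart for $S_q(X,Y)$, and the explicit description of $\ev$ as $(\zeta,x)\mapsto(x,\zeta(x))$) buys a more self-contained and concrete picture, at the cost of the topological bookkeeping you correctly identify.

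There is, however, one step you should not wave through: the claim that surjectivity of the derivative of $(\zeta,x)\mapsto(x,\zeta(x))$, together with a bump-function section through a given point, ``shows local triviality directly.'' In this setting (Bastiani calculus on non-Banach locally convex model spaces) there is no inverse or implicit function theorem, so neither pointwise surjectivity of the differential nor the existence of a smooth local section through each point yields, by itself, a chart in which the map is a projection --- and ``locally conjugate to a projection'' is the definition of submersion being used here. If you try to build such a chart by hand, say via $\zeta\mapsto\zeta-\chi\cdot\zeta(x)$, you will find that the complementary subspace $\{\eta:\eta(x)=0\}$ moves with $x$, so the naive product decomposition fails. The clean way to close this gap is exactly the paper's: once you know the target $Y$ is finite-dimensional and all tangent maps of $\ev$ are surjective (which your computation does establish), \cite[Theorem A]{Glofun} delivers the submersion property. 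With that citation added, your argument is complete; without it, the final step is unjustified.
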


\begin{proof}
Due to \cite[Corollary 11.7]{michor1980} the evaluation map $E\colon  \CfS (X,Y) \times X \rightarrow Y,\ (f,x)\mapsto f(x)$ is smooth, whence \ref{setup: sections} entails that its restriction $\ev$ is smooth on $S_q(X,Y)$.
We are left to prove the submersion property. Since $Y$ is finite-dimensional \cite[Theorem A]{Glofun} shows that $\ev$ will be a submersion if for every $(s,x) \in S_q (X,Y) \times X$ the tangent map $T_{(s,x)} \ev$ is surjective.
Consider  $v_x \in T_xC^\infty (\ast, X) \cong T_xX$ and $Y_s \in T_s C^\infty (X,Y)$ and remember that the latter tangent space is isomorphic to $$\{ Y \in C^\infty(X, TY) \mid Tq (Y) = s \text{ and } \exists K \subseteq X \text{ compact }, Y|_{X\setminus K} \equiv 0\} \cong \Gamma_c (s^* TY),$$ where $s^*TY$ is the pullback bundle.
Combining these identifications with \cite[Proof of Corollary 11.7]{michor1980} and \cite[Corollary 11.6]{michor1980} we obtain a formula for the tangent map
$$T_{(s,x)} \ev (Y_s,v_x) = Ts (v_x)+Y_s (x) \in T_{s(x)} Y.$$
Evaluating in $v_x = 0_x$, $T_{(s,x)} \ev (Y_s,0_x) = Y_s (x)$ is surjective by \cite[Lemma 11.8]{MR1930277}.
\end{proof}

 The proof of Lemma \ref{lem: sect:ev} used extensively that $E$ is a finite-dimensional vector bundle. A version for infinite-dimensional bundles (but compact $M$ can be found in \cite{MR3573833}. Following \cite[Corollary 2.10 and Corollary 2.9]{MR3573833}, we obtain.

 \begin{cor}\label{cor: sub:fin}
 Let $M,N$ be finite dimensional manifolds and $\cG = (G\toto M)$ be a finite-dimensional Lie groupoid, then
 $E \colon C^\infty (M,N) \times M \rightarrow N, E(f,m)
\coloneq f(m)$ and $\ev \colon \Bis (\cG) \times M \rightarrow G, \ev(\sigma,m) = \sigma (m)$ are smooth submersions.
 \end{cor}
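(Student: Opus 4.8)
The plan is to realise both statements as instances of Lemma \ref{lem: sect:ev}. The first assertion is the easy one: the evaluation map $E \colon C^\infty(M,N) \times M \to N$ is exactly the evaluation map of Lemma \ref{lem: sect:ev} applied to the trivial bundle $Y = M \times N$ with $q = \mathrm{pr}_1 \colon M\times N \to M$. Indeed, smooth sections of $\mathrm{pr}_1$ are in bijection with $C^\infty(M,N)$ via $s \mapsto \mathrm{pr}_2 \circ s$, and under this identification $\ev_{\mathrm{pr}_1}$ becomes $E$. Since $\mathrm{pr}_1$ is a surjective submersion, Lemma \ref{lem: sect:ev} gives that $E$ is a smooth submersion. (Alternatively one simply quotes \cite[Corollary 11.7]{michor1980} for smoothness and \cite[Theorem A]{Glofun} together with the tangent-map formula $T_{(f,m)}E(Y_f,v_m)=Tf(v_m)+Y_f(m)$ for the submersion property, exactly as in the proof of Lemma \ref{lem: sect:ev}.)

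For the second assertion, first observe that $\ev \colon \Bis(\cG)\times M \to G$ is a \emph{restriction} of the evaluation map $E \colon C^\infty(M,G)\times M \to G$, which is smooth by the first part; since $\Bis(\cG)$ is a submanifold of $\CfS(M,G)$ by Proposition \ref{prop: BIS:LGP}, the restriction $\ev$ is smooth. It therefore remains to prove that $\ev$ is a submersion. Since $G$ is finite-dimensional, by \cite[Theorem A]{Glofun} it suffices to show that $T_{(\sigma,m)}\ev$ is surjective for every $(\sigma,m) \in \Bis(\cG)\times M$. The tangent space $T_\sigma \Bis(\cG)$ is a subspace of $T_\sigma \CfS(M,G) \cong \Gamma_c(\sigma^* TG)$, and by the same computation as in Lemma \ref{lem: sect:ev} the tangent map is $T_{(\sigma,m)}\ev(Y_\sigma, v_m) = T\sigma(v_m) + Y_\sigma(m)$. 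Setting $v_m = 0_m$, we must show that the linear map $Y_\sigma \mapsto Y_\sigma(m)$, restricted to the subspace $T_\sigma \Bis(\cG) \subseteq \Gamma_c(\sigma^* TG)$, is surjective onto $T_{\sigma(m)}G$.

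The main obstacle is precisely this last point: unlike in Lemma \ref{lem: sect:ev}, where one could evaluate arbitrary compactly supported sections of the pullback bundle and invoke \cite[Lemma 11.8]{MR1930277}, here the admissible variations $Y_\sigma$ are constrained to be tangent to the bisection condition $\beta\circ\sigma = \id_M$ (and $\alpha\circ\sigma \in \Diff(M)$, which is an open condition and hence imposes no infinitesimal constraint). Differentiating $\beta\circ\sigma = \id_M$ shows that $T_\sigma\Bis(\cG)$ consists of those $Y_\sigma \in \Gamma_c(\sigma^*TG)$ with $T\beta \circ Y_\sigma = 0$, i.e.\ sections of the subbundle $\sigma^*(\ker T\beta) \subseteq \sigma^*TG$ (the pullback of the $\beta$-vertical bundle, which is a genuine vector subbundle since $\beta$ is a submersion; here one also uses that the Lie algebroid $\mathcal{A}(\cG) = 1^*\ker T\beta$ identifies $\Gamma_c(\mathcal{A}(\cG))$ with $\mathrm{Lie}(\Bis(\cG))$, cf.\ the Remark after Proposition \ref{prop: BIS:LGP}). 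So the claim reduces to: the evaluation $\Gamma_c(\sigma^*\ker T\beta) \to (\ker T\beta)_{\sigma(m)}$ is surjective — which again follows from \cite[Lemma 11.8]{MR1930277} applied to the vector bundle $\sigma^*\ker T\beta \to M$ — \emph{together with} the fact that $T_{\sigma(m)}G$ is spanned by $(\ker T\beta)_{\sigma(m)}$ and the image of $T_m\sigma$ (which accounts for the horizontal directions): since $T\beta \circ T\sigma = \id$, we have $T_{\sigma(m)}G = (\ker T_{\sigma(m)}\beta) \oplus \mathrm{im}\, T_m\sigma$, and the two summands are hit by the $Y_\sigma(m)$ term and the $T\sigma(v_m)$ term respectively. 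This gives surjectivity of $T_{(\sigma,m)}\ev$, completing the proof. I would also remark, following \cite[Corollary 2.9 and Corollary 2.10]{MR3573833}, that the compact-$M$ version of this argument is already in the literature and the finite-dimensionality of $G$ (hence of the fibres of $\ker T\beta$) is what lets us drop the compactness hypothesis on $M$.
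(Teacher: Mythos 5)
Your proof is correct. The paper gives no argument of its own for Corollary~\ref{cor: sub:fin} beyond the pointer to \cite[Corollaries 2.9 and 2.10]{MR3573833}, so you are in effect supplying the omitted details, and your route --- reducing the first map to Lemma~\ref{lem: sect:ev} for the trivial bundle, and for the bisection case identifying $T_\sigma\Bis(\cG)\cong\Gamma_c(\sigma^*\ker T\beta)$ and using the splitting $T_{\sigma(m)}G=\ker T_{\sigma(m)}\beta\oplus\mathrm{im}\, T_m\sigma$ forced by $T\beta\circ T\sigma=\id$ --- is exactly the natural adaptation of Lemma~\ref{lem: sect:ev} and of the cited reference. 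You also correctly isolate the one point where the Lemma does not apply verbatim, namely that the admissible variations $Y_\sigma$ are constrained to be $\beta$-vertical, which is precisely why the extra $T\sigma(v_m)$ term in the tangent map must be used to reach the horizontal directions.
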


\addcontentsline{toc}{section}{References}
\bibliography{monoid}

\begin{thebibliography}{ACMM89}
\providecommand{\url}[1]{\texttt{#1}}
\providecommand{\urlprefix}{URL }
\expandafter\ifx\csname urlstyle\endcsname\relax
  \providecommand{\doi}[1]{doi:\discretionary{}{}{}#1}\else
  \providecommand{\doi}{doi:\discretionary{}{}{}\begingroup
  \urlstyle{rm}\Url}\fi
\providecommand{\eprint}[2][]{\url{#2}}

\bibitem[ACMM89]{MR1040392}
Abbati, M.~C., Cirelli, R., Mani\`a, A. and Michor, P.
\newblock \emph{The {L}ie group of automorphisms of a principal bundle}.
\newblock J. Geom. Phys. \textbf{6} (1989)(2):215--235

\bibitem[ALR07]{MR2359514}
Adem, A., Leida, J. and Ruan, Y.
\newblock \emph{Orbifolds and stringy topology}, \emph{Cambridge Tracts in
  Mathematics}, vol. 171 (Cambridge University Press, Cambridge, 2007)

\bibitem[Ami18]{Amiri2017}
Amiri, H.
\newblock \emph{A group of continuous self-maps on a topological groupoid}.
\newblock Semigroup Forum \textbf{96} (2018)(1):69--80

\bibitem[AS15]{alas2012}
Alzaareer, H. and Schmeding, A.
\newblock \emph{Differentiable mappings on products with different degrees of
  differentiability in the two factors}.
\newblock Expo. Math. \textbf{33} (2015)(2):184--222

\bibitem[AS17]{AS17}
Amiri, H. and Schmeding, A.
\newblock \emph{{A differentiable monoid of smooth maps on Lie groupoids}}
  2017.
\newblock \eprint{arXiv:1706.04816v1}

\bibitem[Bas64]{bastiani}
Bastiani, A.
\newblock \emph{Applications diff\'erentiables et vari\'et\'es
  diff\'erentiables de dimension infinie}.
\newblock J. Analyse Math. \textbf{13} (1964):1--114

\bibitem[BGN04]{BGN04}
Bertram, W., Gl{{\"o}}ckner, H. and Neeb, K.-H.
\newblock \emph{Differential calculus over general base fields and rings}.
\newblock Expo. Math. \textbf{22} (2004)(3):213--282

\bibitem[Bko65]{MR0177002}
Bkouche, R.
\newblock \emph{Id\'eaux mous d'un anneau commutatif. {A}pplications aux
  anneaux de fonctions}.
\newblock C. R. Acad. Sci. Paris \textbf{260} (1965):6496--6498

\bibitem[Bos07]{MR2343354}
Bos, R.
\newblock \emph{Geometric quantization of {H}amiltonian actions of {L}ie
  algebroids and {L}ie groupoids}.
\newblock Int. J. Geom. Methods Mod. Phys. \textbf{4} (2007)(3):389--436

\bibitem[Bos11]{MR2844451}
Bos, R.
\newblock \emph{Continuous representations of groupoids}.
\newblock Houston J. Math. \textbf{37} (2011)(3):807--844

\bibitem[CSS15]{MR3318255}
Crainic, M., Salazar, M.~A. and Struchiner, I.
\newblock \emph{Multiplicative forms and {S}pencer operators}.
\newblock Math. Z. \textbf{279} (2015)(3-4):939--979

\bibitem[GGV77]{GGV77}
Gelfand, I., Graev, M. and Ver\u{s}ik, A.
\newblock \emph{Representation of a group of smooth mappings of a manifold $x$
  into a compact lie group}.
\newblock Compositio Mathematica \textbf{35} (1977):299--334

\bibitem[Gl{\"{o}}02]{hg2002a}
Gl{\"{o}}ckner, H.
\newblock \emph{{Infinite-dimensional Lie groups without completeness
  restrictions}}.
\newblock In A.~Strasburger, J.~Hilgert, K.~Neeb and W.~Wojty\'{n}ski (Eds.),
  \emph{{Geometry and Analysis on Lie Groups}}, \emph{Banach Center
  Publication}, vol.~55, pp. 43--59 (Warsaw, 2002)

\bibitem[Gl{\"o}16]{Glofun}
Gl{\"o}ckner, H.
\newblock \emph{{Fundamentals of submersions and immersions between
  infinite-dimensional manifolds}} 2016.
\newblock \eprint{arXiv:1502.05795v4}

\bibitem[Gra05]{MR2161810}
Grabowski, J.
\newblock \emph{Isomorphisms of algebras of smooth functions revisited}.
\newblock Arch. Math. (Basel) \textbf{85} (2005)(2):190--196

\bibitem[GS66]{MR0211421}
Guillemin, V. and Sternberg, S.
\newblock \emph{Deformation theory of pseudogroup structures}.
\newblock Mem. Amer. Math. Soc. No. \textbf{64} (1966):80

\bibitem[GSM17]{MR3696590}
Gracia-Saz, A. and Mehta, R.~A.
\newblock \emph{{$\mathcal{VB}$}-groupoids and representation theory of {L}ie
  groupoids}.
\newblock J. Symplectic Geom. \textbf{15} (2017)(3):741--783

\bibitem[HS17]{HS2016}
Hjelle, E.~O. and Schmeding, A.
\newblock \emph{Strong topologies for spaces of smooth maps with
  infinite-dimensional target}.
\newblock Expo. Math. \textbf{35} (2017)(1):13--53

\bibitem[KM97]{conv1997}
Kriegl, A. and Michor, P.
\newblock \emph{{The Convenient Setting of Global Analysis}}.
\newblock Mathematical Surveys and Monographs 53 (Amer. Math. Soc., Providence
  R.I., 1997)

\bibitem[Kos76]{MR0438405}
Kosmann, Y.
\newblock \emph{On {L}ie transformation groups and the covariance of
  differential operators}  (1976):75--89. Mathematical Phys. and Appl. Math.,
  Vol. 3

\bibitem[KSM02]{MR1958838}
Kosmann-Schwarzbach, Y. and Mackenzie, K. C.~H.
\newblock \emph{Differential operators and actions of {L}ie algebroids}.
\newblock In \emph{Quantization, {P}oisson brackets and beyond ({M}anchester,
  2001)}, \emph{Contemp. Math.}, vol. 315, pp. 213--233 (Amer. Math. Soc.,
  Providence, RI, 2002)

\bibitem[Lor09]{Lorenz09}
Lorenz, A.
\newblock \emph{Jet Groupoids, Natural Bundles and the Vessiot Equivalence
  Method}.
\newblock Ph.D. thesis, RWTH Aachen 2009.
\newblock \eprint{urn:nbn:de:hbz:82-opus-27635}

\bibitem[Mac05]{Mackenzie05}
Mackenzie, K. C.~H.
\newblock \emph{General theory of {L}ie groupoids and {L}ie algebroids},
  \emph{London Mathematical Society Lecture Note Series}, vol. 213 (Cambridge
  University Press, Cambridge, 2005)

\bibitem[Mic80]{michor1980}
Michor, P.~W.
\newblock \emph{Manifolds of {D}ifferentiable {M}appings}, \emph{Shiva
  Mathematics Series}, vol.~3 (Shiva Publishing Ltd., Nantwich, 1980)

\bibitem[Mic83]{MR702720}
Michor, P.
\newblock \emph{Manifolds of smooth maps. {IV}. {T}heorem of de {R}ham}.
\newblock Cahiers Topologie G\'eom. Diff\'erentielle \textbf{24}
  (1983)(1):57--86

\bibitem[Mrc05]{MR2159792}
Mrcun, J.
\newblock \emph{On isomorphisms of algebras of smooth functions}.
\newblock Proc. Amer. Math. Soc. \textbf{133} (2005)(10):3109--3113

\bibitem[Nee06]{neeb2006}
Neeb, K.-H.
\newblock \emph{Towards a {L}ie theory of locally convex groups}.
\newblock Jpn. J. Math. \textbf{1} (2006)(2):291--468

\bibitem[Nee08]{MR2501579}
Neeb, K.-H.
\newblock \emph{Lie group extensions associated to projective modules of
  continuous inverse algebras}.
\newblock Arch. Math. (Brno) \textbf{44} (2008)(5):465--489

\bibitem[Nee10]{MR2719276}
Neeb, K.-H.
\newblock \emph{On differentiable vectors for representations of infinite
  dimensional {L}ie groups}.
\newblock J. Funct. Anal. \textbf{259} (2010)(11):2814--2855

\bibitem[Nes03]{MR1930277}
Nestruev, J.
\newblock \emph{Smooth manifolds and observables}, \emph{Graduate Texts in
  Mathematics}, vol. 220 (Springer-Verlag, New York, 2003).
\newblock Joint work of A. M. Astashov, A. B. Bocharov, S. V. Duzhin, A. B.
  Sossinsky, A. M. Vinogradov and M. M. Vinogradov,

\bibitem[Ren80]{MR584266}
Renault, J.
\newblock \emph{A groupoid approach to {$C^{\ast} $}-algebras}, \emph{Lecture
  Notes in Mathematics}, vol. 793 (Springer, Berlin, 1980)

\bibitem[Ryb02]{MR1943713}
Rybicki, T.
\newblock \emph{A {L}ie group structure on strict groups}.
\newblock Publ. Math. Debrecen \textbf{61} (2002)(3-4):533--548

\bibitem[Sal13]{Salazar13}
Salazar, M.~A.
\newblock \emph{{Pfaffian groupoids}}.
\newblock Ph.D. thesis, Utrecht University 2013.
\newblock Cf.\ \url{http://arxiv.org/pdf/1306.1164v2}

\bibitem[SW15]{MR3351079}
Schmeding, A. and Wockel, C.
\newblock \emph{The {L}ie group of bisections of a {L}ie groupoid}.
\newblock Ann. Global Anal. Geom. \textbf{48} (2015)(1):87--123

\bibitem[SW16a]{MR3569066}
Schmeding, A. and Wockel, C.
\newblock \emph{Functorial aspects of the reconstruction of {L}ie groupoids
  from their bisections}.
\newblock J. Aust. Math. Soc. \textbf{101} (2016)(2):253--276

\bibitem[SW16b]{MR3573833}
Schmeding, A. and Wockel, C.
\newblock \emph{({R}e)constructing {L}ie groupoids from their bisections and
  applications to prequantisation}.
\newblock Differential Geom. Appl. \textbf{49} (2016):227--276

\bibitem[Wes67]{MR0211351}
Westman, J.~J.
\newblock \emph{Locally trivial {$C^{r}$} groupoids and their representations}.
\newblock Pacific J. Math. \textbf{20} (1967):339--349

\bibitem[Yud16]{Yudi16}
Yudilevich, O.
\newblock \emph{Lie Pseudogroups \`{a} la Cartan from a Modern Perspective}.
\newblock Ph.D. thesis, Utrecht University 2016.
\newblock \urlprefix\url{https://dspace.library.uu.nl/handle/1874/339516}

\bibitem[ZCZ09]{ZHUO}
Zhuo~Chen, Z.-J.~L. and Zhong, D.-S.
\newblock \emph{On the existence of global bisections of lie groupoids}.
\newblock Acta Mathematica Sinica, English Series \textbf{25}
  (2009)(6):1001--1014

\end{thebibliography}

\end{document}